\documentclass[12pt]{amsart}


\usepackage{amssymb}
\usepackage{bm}
\usepackage{graphicx}
\usepackage{psfrag}
\usepackage{color}
\usepackage{hyperref}
\hypersetup{colorlinks=true, linkcolor=blue, citecolor=cyan, urlcolor=wine}
\usepackage{url}
\usepackage{algpseudocode}
\usepackage{fancyhdr}
\usepackage{xy}
\input xy
\xyoption{all}
\usepackage{stmaryrd}


\voffset=-1.4mm
\oddsidemargin=14pt
\evensidemargin=14pt 
\topmargin=26pt
\headheight=9pt     
\textheight=576pt
\textwidth=441pt 
\parskip=0pt plus 4pt


\pagestyle{fancy}
\fancyhf{}

\fancyhead[CE]{\fontsize{9}{11}\selectfont F.~GOTTI}
\fancyhead[CO]{\fontsize{9}{11}\selectfont CONES OF SUBMONOIDS OF A FINITE-RANK FREE COMMUTATIVE MONOID}
\fancyhead[LE,RO]{\thepage}


\newtheorem*{maintheorem*}{Main Theorem}
\newtheorem{theorem}{Theorem}[section]
\newtheorem{prop}[theorem]{Proposition}
\newtheorem{question}[theorem]{Question}

\newtheorem{lemma}[theorem]{Lemma}
\newtheorem{cor}[theorem]{Corollary}

\theoremstyle{definition}
\newtheorem{definition}[theorem]{Definition}
\newtheorem{remark}[theorem]{Remark}
\newtheorem{example}[theorem]{Example}
\numberwithin{equation}{section}


\newcommand{\cc}{\mathbb{C}}
\newcommand{\ff}{\mathbb{F}}
\newcommand{\nn}{\mathbb{N}}
\newcommand{\qq}{\mathbb{Q}}
\newcommand{\rr}{\mathbb{R}}
\newcommand{\zz}{\mathbb{Z}}
\providecommand\ldb{\llbracket}
\providecommand\rdb{\rrbracket}

\newcommand{\slp}{\text{slope}}
\newcommand{\conv}{\mathsf{conv}}
\newcommand{\cone}{\mathsf{cone}}
\newcommand{\aff}{\mathsf{aff}}
\newcommand{\lin}{\mathsf{lin}}

\newcommand{\gp}{\text{gp}}
\newcommand{\inter}{\mathsf{int}}
\newcommand{\bd}{\mathsf{bd}}
\newcommand{\rank}{\text{rank}}
\newcommand{\rk}{\text{rank}}

\newcommand{\relin}{\mathsf{relin}}

\newcommand{\pp}{\mathsf{P}}

\newcommand{\norm}[1]{\left\lVert#1\right\rVert}

\keywords{free commutative monoid, positive convex cone, finitary monoid, weakly finitary monoid, primary monoid, strongly primary monoid}

\subjclass[2010]{Primary: 51M20, 20M13; Secondary: 20M14}

\begin{document}
	
	\mbox{}
	\title{Geometric and combinatorial aspects of submonoids \\ of a finite-rank free commutative monoid}
	
	
	\author{Felix Gotti}
	\address{Department of Mathematics\\MIT\\Cambridge, MA 02139}
	\email{fgotti@mit.edu}

	\date{\today}
	
	\begin{abstract}
		If $\ff$ is an ordered field and $M$ is a finite-rank torsion-free monoid, then one can embed $M$ into a finite-dimensional vector space over $\ff$ via the inclusion $M \hookrightarrow \gp(M) \hookrightarrow \ff \otimes_{\zz} \gp(M)$, where $\gp(M)$ is the Grothendieck group of $M$. Let~$\mathcal{C}$ be the class consisting of all monoids (up to isomorphism) that can be embedded into a finite-rank free commutative monoid. Here we investigate how the atomic structure and arithmetic properties of a monoid $M$ in $\mathcal{C}$ are connected to the combinatorics and geometry of its conic hull $\cone(M) \subseteq \ff \otimes_{\zz} \gp(M)$. First, we show that the submonoids of $M$ determined by the faces of $\cone(M)$ account for all divisor-closed submonoids of $M$. Then we appeal to the geometry of $\cone(M)$ to characterize whether $M$ is a factorial, half-factorial, and other-half-factorial monoid. Finally, we investigate the cones of finitary, primary, finitely primary, and strongly primary monoids in $\mathcal{C}$. Along the way, we determine the cones that can be realized by monoids in~$\mathcal{C}$ and by finitary monoids in $\mathcal{C}$.
	\end{abstract}

\maketitle

\tableofcontents

\medskip
\section{Introduction}
\label{sec:intro 6}

Let $\mathcal{C}$ denote the class containing, up to isomorphism, each monoid that can be embedded into a finite-rank free commutative monoid. If $\ff$ is an ordered field and $M$ is a monoid in $\mathcal{C}$, then the chain of natural inclusions
\[
	M \hookrightarrow \gp(M) \hookrightarrow V := \ff \otimes_\zz \gp(M)
\]
yields an embedding of $M$ into the finite-dimensional $\ff$-vector space $V$, where $\gp(M)$ is the Grothendieck group of $M$. Many connections between the algebraic properties of a finitely generated monoid $M$ in $\mathcal{C}$ and the geometric properties of its cone $\cone_V(M)$ in $V$ have been established in the past; see, for instance,~\cite[Propositions~2.16, 2.36, 2.37, and~2.40]{BG09}. However, it seems that the search for similar connections for the non-finitely generated monoids in $\mathcal{C}$ has only received isolated attention. In this paper we bring to the reader an initial systematic study of the connection between both arithmetic and atomic aspects of monoids $M$ in $\mathcal{C}$ and both the geometry of $\cone_V(M)$ and the combinatorics of the face lattice of $\cone_V(M)$. Here we primarily focus on the non-finitely generated members of $\mathcal{C}$.

As we proceed to illustrate, various subclasses of $\mathcal{C}$ consisting of non-finitely generated monoids have appeared in the literature of commutative algebra and algebraic geometry. For example, semigroups of values of singular curves were first studied by Delgado in~\cite{fDM87}, and then generalized by Barucci, D'Anna, and Fr\"oberg in~\cite{BDF00} under the term ``good semigroups". Good semigroups are monoids in $\mathcal{C}$ that are not, in general, finitely generated (see Example~\ref{ex:good semigroups}). On the other hand, for a cofinite submonoid $\Gamma$ of $(\nn,+)$ and $s \in \Gamma$, let $M^s_{\Gamma}$ be the monoid consisting of all arithmetic progressions of step size $s$ that are contained in $\Gamma$. Monoids parameterized by the set of pairs $(\Gamma, s)$ are members of $\mathcal{C}$ known as Leamer monoids. The atomic structure of Leamer monoids is connected to the Huneke-Wiegand Conjecture (see Example~\ref{ex:Leamer monoids} for more details). Finally, let $\alpha$ and $\beta$ be two positive irrational numbers such that $\alpha < \beta$, and consider the submonoid $M_{\alpha, \beta}$ of $(\nn^2,+)$ that results from intersecting $\zz^2$ and the cone in the first quadrant of $\rr^2$ determined by the lines $y = \alpha x$ and $y = \beta x$. The monoids $M_{\alpha, \beta}$, which clearly belong to $\mathcal{C}$, show up in the study of Betti tables of short Gorenstein algebras (see Example~\ref{ex:irrational cones}).

It follows immediately that the conic hulls of the monoids $M^s_\Gamma$ and $M_{\alpha, \beta}$ are not polyhedral cones in $\rr^2$ because they are not closed with respect to the Euclidean topology. Then Farkas-Minkowski-Weyl Theorem ensures that the monoids $M^s_\Gamma$ and $M_{\alpha, \beta}$ cannot be finitely generated. Just as we did right now, we shall be using Farkas-Minkowski-Weyl Theorem systematically throughout this paper since it is the crucial tool guaranteeing that most of the monoids in $\mathcal{C}$ we will be focused on cannot be finitely generated.

A cancellative commutative monoid is called atomic if each non-invertible element factors into irreducibles. Note that all monoids in $\mathcal{C}$ are atomic. As for integral domains, a monoid is called a UFM (or a unique factorization monoid) provided that each non-invertible element has an essentially unique factorization into irreducibles. Any UFM is clearly atomic. A huge variety of atomic conditions between being atomic and being a UFM have been considered in the literature, including half-factoriality, other-half-factoriality, being finitary, and being strongly primary. In this paper, we investigate all the mentioned intermediate atomic conditions for members of~$\mathcal{C}$ in terms of the cones they generate.

Let $M$ be an atomic monoid. Then $M$ is called an HFM (or a half-factorial monoid) if for each non-invertible $x \in M$, any two factorizations of $x$ have the same number of irreducibles (counting repetitions). On the other hand, $M$ is called an OHFM (or an other-half-factorial monoid) if for each non-invertible $x \in M$ no two essentially distinct factorizations of~$x$ contain the same number of irreducibles (counting repetitions). Also,~$M$ is called primary if it is nontrivial and for all nonzero non-invertible $x,y \in M$ there exists $n \in \nn$ such that $ny \in x + M$. If $M$ belongs to $\mathcal{C}$, then $M$ is called finitary if there exist a finite subset $S$ of $M$ and a positive integer $n$ such that $n (M \! \setminus \! \{0\}) \subseteq S + M$.

The rest of this paper is structured as follows. In Section~\ref{sec:background} we review the main concepts in commutative monoids and convex geometry we will need in later sections. In Section~\ref{sec:cones realized} we present some preliminary results about monoids in $\mathcal{C}$ and their cones. The main result in this section is Theorem~\ref{thm:a characterization of cones generated by monoids in C}, where we determine the cones generated by monoids in $\mathcal{C}$. In Section~\ref{sec:divisor-closed submonoids and finitely primary monoids} we study the submonoids of a monoid $M$ in $\mathcal{C}$ determined by the faces of $\cone_V(M)$, which we call face submonoids. The most important result we achieve in this section is Theorem~\ref{thm:characterization of divisor-closed submonoids in $C$}, where face submonoids are characterized; such a characterization allows us to deduce a characterization of the cones of primary monoids in $\mathcal{C}$, which was previously obtained in~\cite{GHL95}. Then in Section~\ref{sec:factoriality} we offer geometric characterizations for the UFMs, HFMs, and OHFMs of~$\mathcal{C}$; we do this in Theorems~\ref{thm:factoriality characterizations},~\ref{thm:HF characterizations}, and~\ref{thm:OHF characterizations}, respectively. Lastly, Section~\ref{sec:primary monoids and finitary monoids} is devoted to study the cones of primary monoids and finitary monoids. The main results in this section are Theorem~\ref{thm:the closure of the cone of a finitely primary monoid is rational and simplicial}, which states that if a monoid $M$ in $\mathcal{C}$ is finitely primary, then the closure of the cone it generates (when $\ff = \rr$) is a rational simplicial cone, and Theorem~\ref{thm:finitary GAMs}, which states that a monoid $M$ in $\mathcal{C}$ is finitary provided that the cone it generates is polyhedral.

\medskip
\section{Atomic Monoids and Convex Cones}
\label{sec:background}

In this section we introduce most of the relevant concepts related to commutative semigroups and convex geometry required to follow the results presented later. General references for any undefined terminology or notation can be found in~\cite{pG01} for commutative semigroups, in~\cite{GH06b} for atomic monoids, and in~\cite{rtR70} for convex geometry.

\smallskip
\subsection{General Notation}

Set $\nn := \{0,1,2,\dots\}$. If $x,y \in \zz$, then we let $\ldb x,y \rdb$ denote the interval of integers between $x$ and $y$, i.e., 
\[
	\ldb x,y \rdb := \{z \in \zz \mid x \le z \le y\}.
\]
Clearly, $\ldb x,y \rdb$ is empty when $x > y$. In addition, for $X \subseteq \rr$ and $r \in \rr$, we set
\[
	X_{\ge r} := \{x \in X \mid x \ge r\},
\]
and we use the notation $X_{> r}$ in a similar way. Lastly, if $Y \subseteq \rr^d$ for some $d \in \nn \setminus \{0\}$, then we set $Y^\bullet := Y \setminus \{0\}$.

\smallskip
\subsection{Atomic Monoids}

A \emph{monoid} is commonly defined in the literature as a semigroup along with an identity element. However, we shall tacitly assume that all monoids here are also commutative and cancellative, omitting these two attributes accordingly. As we only consider commutative monoids, unless otherwise specified we will use additive notation. In particular, the identity element of a monoid $M$ will be denoted by $0$, and we let $M^\bullet$ denote the set $M \! \setminus \! \{0\}$. A monoid is called \emph{reduced} if its only invertible element is the identity element. Unless we state otherwise, monoids here are also assumed to be reduced.

Let $M$ be a monoid. We write $M = \langle S \rangle$ when $M$ is generated by a set $S$, i.e., $S \subseteq M$ and no proper submonoid of $M$ contains $S$. If $M$ can be generated by a finite set, then~$M$ is said to be \emph{finitely generated}. An element $a \in M^\bullet$ is called an \emph{atom} if for each pair of elements $x,y \in M$ the equation $a = x+y$ implies that either $x = 0$ or $y = 0$. The set consisting of all atoms of $M$ is denoted by $\mathcal{A}(M)$. It immediately follows that
\[
	\mathcal{A}(M) := M^\bullet \setminus \big( M^\bullet + M^\bullet \big).
\]
Since $M$ is reduced, $\mathcal{A}(M)$ will be contained in each generating set of $M$. However, $\mathcal{A}(M)$ may not generate $M$: for instance, taking $M$ to be the submonoid of $(\qq_{\ge 0},+)$ generated by $\{1/2^n \mid n \in \nn\}$ one can see that $\mathcal{A}(M) = \emptyset$ and, therefore, $\mathcal{A}(M)$ does not generate $M$. If $\mathcal{A}(M)$ generates $M$, then $M$ is said to be \emph{atomic}. From now on, all monoids addressed in this paper are assumed to be atomic. For $x,y \in M$, we say that $x$ \emph{divides} $y$ \emph{in} $M$ and write $x \mid_M y$ provided that $y = x + x'$ for some $x' \in M$. An element $p \in M^\bullet$ is said to be \emph{prime} if whenever $p \mid_M x + y$ for some $x,y \in M$ either $p \mid_M x$ or $p \mid_M y$. The monoid $M$ is called a \emph{UFM} (or a \emph{unique factorization monoid}) if each nonzero element can be written as a sum of primes (in which case such a sum is unique up to permutation). Clearly, every prime element of $M$ is an atom. Thus, if~$M$ is a UFM, then it is, in particular, an atomic monoid.

A subset $I$ of $M$ is called an \emph{ideal} if $I + M = I$ (or, equivalently, $I + M \subseteq I$). An ideal $I$ is \emph{principal} if $I = x + M$ for some $x \in M$. Furthermore, $M$ satisfies the \emph{ACCP} (or the \emph{ascending chain condition on principal ideals}) provided that each increasing sequence of principal ideals of~$M$ eventually stabilizes. It is well known that each monoid satisfying the ACCP is atomic \cite[Proposition~1.1.4]{GH06b}. Gram's monoid, introduced in~\cite{aG74}, is an atomic monoid that does not satisfy the ACCP.

For the monoid $M$ there exist an abelian group $\text{gp}(M)$ and a monoid homomorphism $\iota \colon M \hookrightarrow \text{gp}(M)$ such that any monoid homomorphism $\phi \colon M \to G$, where $G$ is an abelian group, uniquely factors through $\iota$. The group $\text{gp}(M)$, which is unique up to isomorphism, is called the \emph{Grothendieck group}\footnote{The Grothendieck group of a monoid is often called the difference or the quotient group depending on whether the monoid is written additively or multiplicatively.} of $M$. The monoid $M$ is \emph{torsion-free} if for all $n \in \nn^\bullet$ and $x,y \in M$ the equality $nx = ny$ implies that $x = y$. A monoid is torsion-free if and only if its Grothendieck group is torsion-free (see \cite[Section~2.A]{BG09}). If~$M$ is torsion-free, then the \emph{rank} of~$M$, denoted by $\rk(M)$, is the rank of the $\zz$-module $\text{gp}(M)$, that is, the dimension of the $\qq$-space $\qq \otimes_\zz \text{gp}(M)$.

A multiplicative monoid $F$ is said to be \emph{free on} a subset $A$ of $F$ provided that each element $x \in F$ can be written uniquely in the form
\[
	x = \prod_{a \in A} a^{\mathsf{v}_a(x)},
\]
where $\mathsf{v}_a(x) \in \nn$ and $\mathsf{v}_a(x) > 0$ only for finitely many $a \in A$. It is well known that for each set $A$, there exists a unique (up to isomorphism) monoid $F$ such that $F$ is a free (commutative) monoid on $A$. For the monoid $M$, we let $\mathsf{Z}(M)$ denoted the free (commutative) monoid on $\mathcal{A}(M)$, and we call the elements of $\mathsf{Z}(M)$ \emph{factorizations}. If $z = a_1 \cdots a_n$ is a factorization in $\mathsf{Z}(M)$ for some $n \in \nn$ and $a_1, \dots, a_n \in \mathcal{A}(M)$, then~$n$ is called the \emph{length} of $z$ and is denoted by $|z|$. There is a unique monoid homomorphism $\pi \colon \mathsf{Z}(M) \to M$ satisfying that $\pi(a) = a$ for all $a \in \mathcal{A}(M)$. 
For each $x \in M$ the set
\[
	\mathsf{Z}(x) := \mathsf{Z}_M(x) := \pi^{-1}(x) \subseteq \mathsf{Z}(M)
\]
is called the \emph{set of factorizations} of $x$. In addition, for $k \in \nn$ we set
\[
	\mathsf{Z}_k(x) := \{z \in \mathsf{Z}(x) : |z| = k \} \subseteq \mathsf{Z}(M).
\]
Observe that $M$ is atomic if and only if $\mathsf{Z}(x)$ is nonempty for all $x \in M$ (notice that $\mathsf{Z}(0) = \{\emptyset\}$). By~\cite[Theorem~1.2.9]{GH06b}, $M$ is a UFM if and only if $|\mathsf{Z}(x)| = 1$ for all $x \in M$. The monoid $M$ is called an \emph{FFM} (or a \emph{finite factorization monoid}) if $|\mathsf{Z}(x)| < \infty$ for all $x \in M$. For each $x \in M$, the \emph{set of lengths} of $x$ is defined by
\[
	\mathsf{L}(x) := \mathsf{L}_M(x) := \{|z| :  z \in \mathsf{Z}(x)\}.
\]
The sets of lengths of submonoids of $(\nn^d,+)$, where $d \in \nn$, have been considered in~\cite{fG19c}. If $|\mathsf{L}(x)| < \infty$ for all $x \in M$, then $M$ is called a \emph{BFM} (or a \emph{bounded factorization monoid}). Clearly, each FFM is a BFM. In addition, each BFM satisfies the ACCP (see \cite[Corollary~1.3.3]{GH06b}).

The monoid $M$ is called \emph{finitary} if it is a BFM and there exist a finite subset $S$ of $M$ and $n \in \nn^\bullet$ such that $n M^\bullet \subseteq S + M$. Clearly, finitely generated monoids are finitary. On the other hand, $M$ is called \emph{primary} provided that $M$ is nontrivial and for all $x,y \in M^\bullet$ there exists $n \in \nn$ such that $ny \in x + M$. If $M$ is both finitary and primary, then it is called \emph{strongly primary}.
 
A \emph{numerical monoid} is a cofinite submonoid of $(\nn,+)$. Each numerical monoid is finitely generated and, therefore, atomic. In addition, it is not hard to see that numerical monoids are strongly primary. An introduction to numerical monoids can be found in~\cite{GR09}. The class of finitely generated submonoids of $(\nn^d,+)$ naturally generalizes that one of numerical monoids. Although members of the former class are obviously finitary, they need not be strongly primary; indeed, numerical monoids are the only primary monoids in this class (see Proposition~\ref{prop:primary characterization}). However, we shall see later that there are many non-finitely generated submonoids of $(\nn^d,+)$ that are primary. In addition, we will provide necessary and sufficient conditions for a submonoid of $(\nn^d,+)$ to be finitary.

\smallskip
\subsection{Convex Cones}

For the rest of this section, fix $d \in \nn^\bullet$. As usual, for $x \in \rr^d$ we let $\norm{x}$ denote the Euclidean norm of~$x$. We always consider the space $\rr^d$ endowed with the topology induced by the Euclidean norm. In addition, if $\ff$ is an ordered field such that $\qq \subseteq \ff \subseteq \rr$, we let the vector space $\ff^d$ inherit the topology of~$\rr^d$. For a subset $S$ of~$\rr^d$, we let $\inter \, S$, $\bar{S}$, and $\bd \, S$ denote the interior, closure, and boundary of~$S$, respectively.

Let $\ff$ be an ordered field, and let $V$ be a $d$-dimensional vector space over $\ff$ for some $d \in \nn^\bullet$. When $V = \ff^d$, we denote the standard inner product of $\ff^d$ by $\langle \, , \rangle$, that is, $\langle x, y \rangle = \sum_{i=1}^d x_i y_i$ for all $x = (x_1, \dots, x_d)$ and $y = (y_1, \dots, y_d)$ in $\ff^d$. The convex hull of $X \subseteq V$ (i.e., the intersection of all convex subsets of $V$ containing $X$) is denoted by $\conv_V(X)$. A nonempty convex subset $C$ of $V$ is called a \emph{cone} if $C$ is closed under linear combinations with nonnegative coefficients. A cone $C$ is called \emph{pointed} if $C \cap -C = \{0\}$. Unless otherwise stated, we assume that the cones we consider here are pointed. If $X$ is a nonempty subset of $V$, then the \emph{conic hull} of $X$, denoted by $\cone_V(X)$, is defined as follows:
\[
	\cone_V(X) := \bigg\{ \sum_{i=1}^n c_i x_i \ \bigg{|} \ n \in \nn, \ \text{and} \ x_i \in X \ \text{and} \ c_i \ge 0 \ \text{for every} \ i \in \ldb 1,n \rdb \bigg\},
\]
i.e., $\cone_V(X)$ is the smallest cone in $V$ containing $X$. When there is no risk of ambiguity, we write $\cone(X)$ instead of $\cone_V(X)$. A cone in $V$ is called \emph{simplicial} if it is the conic hull of a linearly independent set of vectors.

Let $C$ be a cone in $V$. A \emph{face} of~$C$ is a cone $F$ contained in $C$ and satisfying the following condition: for all $x,y \in C$ the fact that $F$ intersects the open line segment $\{tx + (1-t)y \mid t \in \ff \text{ and } 0 < t < 1\}$ implies that both $x$ and $y$ belong to $F$. If $F$ is a face of $C$ and $F'$ is a face of $F$, then it is clear that $F'$ must be a face of $C$. For a nonzero vector $u \in V$, consider the hyperplane $H := \{x \in V \mid \langle x, u \rangle = 0 \}$, and denote the closed half-spaces $\{x \in V \mid \langle x, u \rangle \le 0 \}$ and $\{x \in V \mid \langle x, u \rangle \ge 0 \}$ by $H^-$ and $H^+$, respectively. If a cone~$C$ satisfies that $C \subseteq H^-$ (resp., $C \subseteq H^+$), then $H$ is called a \emph{supporting hyperplane} of~$C$ and~$H^-$ (resp., $H^+$) is called a \emph{supporting half-space} of $C$. A face $F$ of $C$ is called \emph{exposed} if there exists a supporting hyperplane $H$ of $C$ such that $F = C \cap H$.

The cone $C$ is called \emph{polyhedral} provided that it can be expressed as the intersection of finitely many closed half-spaces. Farkas-Minkowski-Weyl Theorem states that a convex cone is polyhedral if and only if it is the conic hull of a finite set~\cite[Section~1]{CC58}. It is clear that every simplicial cone is polyhedral. When $V = \ff^d$, a cone in~$V$ is called \emph{rational} if it is the conic hull of vectors with integer coordinates. Gordan's Lemma states that if $C$ is a rational polyhedral cone in $\rr^d$ and $G$ is a subgroup of $(\qq^d,+)$, then $C \cap G$ is finitely generated~\cite[Lemma 2.9]{BG09}.

A subset $A$ of $V$ is called an \emph{affine set} (or an \emph{affine subspace}) provided that for all $x,y \in A$ with $x \neq y$, the line determined by $x$ and $y$ is contained in $A$. Affine sets are translations of subspaces, and an $(n-1)$-dimensional affine set is called an \emph{affine hyperplane}. The \emph{affine hull} of a subset $S$ of $V$, denoted by $\aff_V(S)$, is the smallest affine set containing~$S$. In addition, we let $\lin_V(S)$ denote the smallest subspace of $V$ containing $S$. Clearly, $\lin_V(S) = \aff_V(\cone_V(S))$. Let $V'$ be also a finite-dimensional vector space over~$\ff$, and let $C$ and~$C'$ be cones in $V$ and~$V'$, respectively. Then $C$ and~$C'$ are called \emph{affinely equivalent} if there exists an invertible linear transformation from $\aff_V(C)$ to $\aff_{V'}(C')$ mapping $C$ onto~$C'$. When $\ff$ is an intermediate field of the extension $\qq \subseteq \rr$, the relative interior of $S$, denoted by $\relin(S)$, is the Euclidean interior of $S$ when considered as a subset of $\aff_V(S)$. If $C$ is a cone in $\rr^d$, then $C$ is the disjoint union of all the relative interiors of its nonempty faces~\cite[Theorem~18.2]{rtR70}.

\medskip
\section{Monoids in $\mathcal{C}$ and Their Cones}
\label{sec:monoids in C}

\subsection{Monoids in the Class $\mathcal{C}$}

In this section we introduce the class of monoids we shall be concerned with throughout this paper. We also introduce the cones associated to such monoids.

\begin{prop} \label{prop:conditions defining monoids in C}
	For a monoid $M$, the following statements are equivalent.
	\begin{enumerate}
		\item[(a)] The monoid $M$ can be embedded into a finite-rank free (commutative) monoid.
		\smallskip
		
		\item[(b)] The monoid $M$ has finite rank and can be embedded into a free (commutative) monoid.
		\smallskip
		
		\item[(c)] There exists $d \in \nn$ such that $M$ can be embedded in $(\nn^d,+)$ as a maximal-rank submonoid.
	\end{enumerate}
\end{prop}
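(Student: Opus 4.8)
**The plan is to prove the equivalences via the cycle (a) $\Rightarrow$ (b) $\Rightarrow$ (c) $\Rightarrow$ (a), exploiting the structure theory of free commutative monoids.**

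First I would dispatch the implication (a) $\Rightarrow$ (b). Suppose $M$ embeds in a finite-rank free commutative monoid $F$, which we may take to be $(\nn^d, +)$ for some $d \in \nn$. Since $\gp((\nn^d,+)) \cong \zz^d$ has rank $d$, and the Grothendieck group is functorial with respect to monoid embeddings (an embedding $M \hookrightarrow F$ induces an injection $\gp(M) \hookrightarrow \gp(F)$ because $\gp(F)$ is torsion-free and receives the map $M \to F \hookrightarrow \gp(F)$, which factors through $\gp(M)$ by the universal property), we conclude $\rk(M) = \rk(\gp(M)) \le d < \infty$. Thus $M$ has finite rank and is manifestly embeddable in a free commutative monoid, giving (b). This step is essentially bookkeeping with the universal property and functoriality of $\gp$.

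Next, for (b) $\Rightarrow$ (c), assume $M$ has finite rank $r$ and embeds in some free commutative monoid $F$ on a (possibly infinite) basis $A$. The key observation is that any given element of $M$, and more generally any finite subset, has support in only finitely many coordinates of $F$; but since $M$ may be non-finitely generated, I need to argue more carefully that the total support is finite. I would instead pass to $\gp(M)$: the composite $M \hookrightarrow F \hookrightarrow \gp(F) \cong \zz^{(A)}$ (the direct sum) lands, after restricting to $\gp(M)$, inside a free abelian subgroup of rank $r$. Choosing a $\zz$-basis, the image of $\gp(M)$ sits inside $\zz^r$, and hence the image of $M$ sits inside $\zz^r$. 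The heart of the matter is to adjust this embedding so that $M$ lands in $\nn^d$ \emph{as a maximal-rank submonoid}, i.e., with $\rk(\text{image}) = d$. Since $M$ is reduced and torsion-free of rank $r$, I expect to realize $M$ inside $\nn^r$ after a suitable $\gp$-automorphism, using that a finite-rank reduced torsion-free monoid embeds in its rational vector space and that the positive cone can be coordinatized; here one must ensure the embedding target has dimension exactly equal to $\rk(M)$ so that maximal rank is automatic. This is the implication I expect to be the main obstacle, since controlling both the embedding \emph{and} the maximality of the rank simultaneously requires choosing the coordinate system so that $\gp(M)$ maps onto a full-rank sublattice of $\zz^d$ with $M$ staying in the nonnegative orthant.

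Finally, (c) $\Rightarrow$ (a) is immediate: if $M$ embeds in $(\nn^d, +)$, then since $(\nn^d,+)$ is itself a free commutative monoid of rank $d$, the embedding realizes $M$ inside a finite-rank free commutative monoid, which is precisely statement (a); the maximal-rank hypothesis is not even needed here. Tying the three implications together closes the cycle and establishes the equivalence of (a), (b), and (c).
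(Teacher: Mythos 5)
Your implications (a) $\Rightarrow$ (b) and (c) $\Rightarrow$ (a) are fine and essentially coincide with the paper's. The problem is the combined step (b) $\Rightarrow$ (c), where you correctly locate the difficulty but do not resolve it, and where your initial reduction actually destroys the information needed to resolve it. You pass from $M \hookrightarrow F \hookrightarrow \gp(F)$ to an abstract identification $\gp(M) \cong \zz^r$ and then retain only that $M$ is a reduced, torsion-free, rank-$r$ monoid sitting inside $\zz^r$; from that data alone you hope to find an automorphism of $\gp(M)$ moving $M$ into the nonnegative orthant. But that intermediate claim is false: the paper itself exhibits reduced, torsion-free, finite-rank monoids with free Grothendieck group that embed in no $(\nn^d,+)$ whatsoever, e.g.\ Bruns's monoid $M = \{(0,0)\} \cup \{(m,n) \in \zz^2 \mid n > 0\}$ (rank $2$, $\gp(M) = \zz^2$), and also the Puiseux monoid $\langle 1/p \mid p \text{ prime}\rangle$ of rank $1$. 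What makes (b) work is the \emph{positivity} of the coordinates inherited from the embedding into the free monoid, and your reduction throws exactly that away. The paper's proof of (b) $\Rightarrow$ (a) keeps it: writing $F = \bigoplus_{x \in X} \nn x$, the span $W$ of $\gp(M)$ inside $\bigoplus_{x\in X}\qq x$ is finite-dimensional, each basis vector of $W$ has finite support, so $W \subseteq \bigoplus_{y \in Y} \qq y$ for a finite $Y \subseteq X$, and then $M \subseteq W \cap \bigoplus_{x\in X} \nn x = \bigoplus_{y \in Y}\nn y$ --- the intersection with the original free monoid is what preserves nonnegativity.

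Even after positivity is secured (so $M \subseteq \nn^s$ for some finite $s$), cutting the ambient rank down to $d = \rank(M)$ is not a matter of ``coordinatizing the positive cone'': the paper's proof of (a) $\Rightarrow$ (c) forms $M' := \nn^s \cap V$, where $V$ is the $\qq$-span of $M$, invokes Gordan's Lemma to see that $M'$ is finitely generated, and then cites \cite[Proposition~2.17]{BG09} to embed the finitely generated rank-$d$ monoid $M'$ (and hence $M$) into $(\nn^d,+)$. Your phrases ``suitable $\gp$-automorphism'' and ``the positive cone can be coordinatized'' are placeholders for precisely this nontrivial machinery, so as written the proposal has a genuine gap at its central step.
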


\begin{proof}
	(a) $\Rightarrow$ (b): Suppose that $F$ is a free (commutative) monoid of finite rank containing $M$, and assume that $\gp(M) \subseteq \gp(F)$. Then one can consider $\gp(M)$ as a $\zz$-submodule of $\gp(F)$. Since $\gp(F)$ is a finite-rank $\zz$-module, so is $\gp(M)$. Hence~$M$ has finite rank, which yields~(b).
	\smallskip
	
	(b) $\Rightarrow$ (a): To argue this implication, let $X$ be a set such that $M$ is embedded into the free (commutative) monoid $\bigoplus_{x \in X} \nn x$. After identifying $M$ with its image, we can assume that $M \subseteq \bigoplus_{x \in X} \nn x$ and identify $\gp(M)$ with a subgroup of $\bigoplus_{x \in X} \zz x$ containing $M$. Since $\rk(M) < \infty$, the dimension of the subspace $W$ of $V := \bigoplus_{x \in X} \qq x$ generated by $\gp(M)$ is finite. Let $\{b_1, \dots, b_k\}$ be a basis of $W$ for some $k \in \nn$. For each $i \in \ldb 1,k \rdb$ there exists a finite subset $Y_i$ of $X$ such that $b_i \in \bigoplus_{x \in Y_i} \qq x$. As a result, $W \subseteq \bigoplus_{y \in Y} \qq y$, where $Y = \bigcup_{i=1}^k Y_i$. Then one finds that
	\[
		M \subseteq \bigg( \bigoplus_{y \in Y} \qq y \bigg) \bigcap \bigg( \bigoplus_{x \in X} \nn x \bigg) = \bigoplus_{y \in Y} \nn y.
	\]
	Because $|Y| < \infty$, the monoid $\bigoplus_{y \in Y} \nn y$ is a free (commutative) monoid of finite rank containing $M$, and so~(1) holds.
	\smallskip
	
	(c) $\Rightarrow$ (a): It follows trivially.
	 \smallskip
	 
	(a) $\Rightarrow$ (c): To prove this last implication, let~$M$ be a monoid of rank $d$, and suppose that $M$ is a submonoid of a free (commutative) monoid of rank~$r$ for some $r \in \nn_{\ge d}$. There is no loss of generality in assuming that $M$ is a submonoid of $(\nn^r,+)$. Let~$V$ be the subspace of $\qq^r$ generated by~$M$. Since $\rk(M) = d$, the subspace $V$ has dimension~$d$. Now consider the submonoid $M' := \nn^r \cap V$ of $(\nn^r,+)$. As $M'$ is the intersection of the rational cone $\cone(\nn^r \cap V)$ and the lattice $\zz^r \cap V \cong (\zz^d,+)$, it follows from Gordan's Lemma that $M'$ is finitely generated. On the other hand, $M \subseteq M' \subseteq V$ guarantees that $\rk(M') = d$. Since $M'$ is a finitely generated submonoid of $(\nn^r,+)$ of rank~$d$, it follows from~\cite[Proposition~2.17]{BG09} that $M'$ is isomorphic to a submonoid of $(\nn^d,+)$. This, in turn, implies that $M$ is isomorphic to a submonoid of $(\nn^d,+)$, which concludes our argument.
\end{proof}

As we are interested in studying monoids satisfying the equivalent conditions of Proposition~\ref{prop:conditions defining monoids in C}, we introduce the following notation.
\medskip

\noindent \textbf{Notation:} Let $\mathcal{C}$ denote the class consisting of all monoids (up to isomorphism) satisfying the conditions in Proposition~\ref{prop:conditions defining monoids in C}. In addition, for every $d \in \nn^\bullet$, we set
\[
	\mathcal{C}_d := \{M \in \mathcal{C} \mid \rk(M) = d \}.
\]

A monoid is \emph{affine} if it is isomorphic to a finitely generated submonoid of the free abelian group $(\zz^d,+)$ for some $d \in \nn$. The interested reader can find a self-contained treatment of affine monoids in~\cite[Part~2]{BG09}. Clearly, the class $\mathcal{C}$ contains a large subclass of affine monoids. Computational aspects of affine monoids and arithmetic invariants of half-factorial affine monoids have been studied in~\cite{GOW19} and~\cite{GOS13}, respectively. Diophantine monoids form a special subclass of that one consisting of affine monoids; Diophantine monoids have been studied in~\cite{CKO02}. Monoids in $\mathcal{C}$ of small rank were recently investigated in~\cite{CO17}. Some other special subclasses of $\mathcal{C}$ have been previously considered in the literature as they naturally arise in the study of algebraic curves, toric geometry, and homological algebra. Here we offer some examples.

\begin{example}
	If $M$ is a finitely primary monoid (see definition in Subsection~\ref{subsec:finitely primary monoids}), then $M$ is primary and satisfies that $\widehat{M} \cong (\nn^d,+)$ \cite[Theorem~2.9.2]{GH06b}. Hence $\mathcal{C}$ contains all finitely primary monoids. The class of finitely primary monoids contains no (nontrivial) finitely generated monoids.
\end{example}

\begin{example} \label{ex:good semigroups}
	Good semigroups, which also form a subclass of $\mathcal{C}$, were first studied in~\cite{fDM87} under the name ``semigroup of values" in the context of singular curves and then generalized in~\cite{BDF00}, where the term ``good semigroup" was coined. Good semigroups are submonoids of $(\nn^d,+)$ that naturally generalize semigroups of values of an algebraic curve in the sense that monoids on both classes satisfy certain common ``good" properties. For instance, the semigroup $S$ of values of the commutative ring
	\[
		R := \cc[[x,y]] / \big( (x^7 - x^6 + 4x^5y + 2x^3y^2 - y^4 )(x^3 - y^2) \big)
	\]
	is represented in Figure~\ref{fig:good monoid}.
	\begin{figure}[h]
		\begin{center}
			\includegraphics[width=8.5cm]{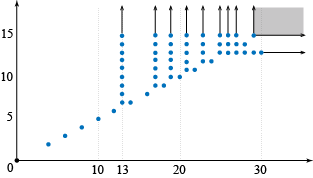}
			\caption{A non-finitely generated good semigroup $S$ (note that the first infinite column contains infinitely many atoms).}
			\label{fig:good monoid}
		\end{center}
	\end{figure}
	As $\{(x,y) \in S \mid x < 13\}$ is finite, the affine line $x=13$ of~$\rr^2$ contains infinitely many atoms of~$S$. Hence the good semigroup~$S$ is not finitely generated (for more details on this example, see~\cite[page~8]{BDF05}). In addition, it has been verified in~\cite[Example~2.16]{BDF00} that the good semigroup with underlying set
	\[
		\{(0,0)\} \cup \{(x,y) \in \nn^2 \mid x \ge 25 \ \text{and} \ y \ge 27 \}
	\]
	is not the semigroup of values of any algebraic curve. Good semigroups have received substantial attention since they were introduced; see for example~\cite{BDF00,BDF05,mD97}, and see~\cite{DGMT18,MZ19} for more recent studies.
\end{example}

A subclass of non-finitely generated monoids in $\mathcal{C}$ naturally shows in connection with the Huneke-Wiegand conjecture.

\begin{example} \label{ex:Leamer monoids}
	From the structure theorem for modules over a PID, one obtains that if $R$ is a $1$-dimensional integrally-closed local domain and~$M$ is a finitely generated torsion-free $R$-module, then~$M$ is free if and only if $M \otimes_R \text{Hom}(M, R)$ is torsion-free. In the same direction, Huneke and Wiegand claimed in~\cite[pages 473--474]{HW94} that if~$R$ is a $1$-dimensional Gorenstein domain and~$M$ is a nonzero finitely generated non-projective $R$-module, then $M \otimes_R \text{Hom}(M,R)$ has a nontrivial torsion submodule (this is known as the Huneke-Wiegand Conjecture). Given a numerical monoid $\Gamma$ and $s \in \nn \setminus \Gamma$, consider the set
	\[
		M^s_\Gamma := \{(0, 0)\} \cup \{(x, n) \mid \{x, x+s, x+2s, \dots, x + ns\} \subseteq \Gamma \} \subset \nn^2,
	\]
	which parameterizes all arithmetic progressions of step size $s$ contained in the monoid~$\Gamma$. Clearly, $M^s_\Gamma$ is a submonoid of $(\nn^2,+)$; it is called the \emph{Leamer monoid} of $\Gamma$ of \emph{step size} $s$. Figure~\ref{fig:Leamer monoid} shows the Leamer monoid of the numerical monoid $\langle 11,12,15 \rangle$ of step size $2$.
	\begin{figure}[h]
		\begin{center}
			\includegraphics[width=8.5cm]{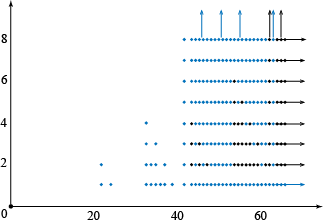}
			\caption{The Leamer monoid $M_{\Gamma}^s$ consisting of all arithmetic progressions in $\Gamma = \langle 11,12,15 \rangle$ of step size $s=2$. The atoms of $M_{\Gamma}^s$ are the light blue dots while the non-atoms are the black dots.}
			\label{fig:Leamer monoid}
		\end{center}
	\end{figure}
	The atomic structure of Leamer monoids is connected to the Huneke-Wiegand Conjecture via~\cite[Corollary~7]{GL13}. Leamer monoids are non-finitely generated \cite[Lemma~2.8]{HKMOP14} rank-$2$ monoids contained in the class~$\mathcal{C}$. Arithmetic properties of Leamer monoids have been studied in~\cite{HKMOP14} and, more recently, in~\cite{CT19}.
\end{example}

The following example has been kindly provided by Roger Wiegand and should appear in the forthcoming manuscript~\cite{AGW19}.

\begin{example} \label{ex:irrational cones}
	Let $\alpha$ and $\beta$ be two positive irrational numbers such that $\alpha < \beta$, and consider the monoid $M_{\alpha, \beta}$ defined as follows:
	\[
		M_{\alpha,\beta} := \{(0,0)\} \bigcup \bigg\{ (m,n) \in \nn^2 \ \bigg{|} \ \alpha < \frac{n}{m} < \beta \bigg\}.
	\]
	Farkas-Minkowski-Weyl Theorem guarantees that $M_{\alpha, \beta}$ is not finitely generated and, therefore, $|\mathcal{A}(M_{\alpha, \beta})| = \infty$. In addition, $M_{\alpha, \beta}$ is a primary FFM (see Proposition~\ref{prop:monoids in C are FF monoids} and Proposition~\ref{prop:primary characterization}). The sequence of monoids $(M_n)_{n \ge 3}$ obtained by setting
	\[
		\alpha = \frac{2}{n + \sqrt{n^2 - 4}} \quad \text{ and } \quad \beta = \frac{n + \sqrt{n^2 - 4}}{2}
	\]
	shows up in the study of Betti tables of short Gorenstein algebras. In an ongoing project, Avramov, Gibbons, and Wiegand have proved that for each $n \in \nn_{\ge 3}$,
	\[
		\mathcal{A}(M_n) = \{\omega_n^{1-a}(1,b) \mid (a,b) \in \Gamma\},
	\]
	where $\omega_n \colon (p,q) \mapsto (np - q,p)$ is an automorphism of $M_n$ and $\Gamma := \zz \times \ldb 1,n-2 \rdb$. This suggests the following question.
	
	\begin{question}
		For any irrational (or irrational and algebraic) numbers $\alpha$ and $\beta$ with $\alpha < \beta$, can we generalize Example~\ref{ex:irrational cones} to describe the set of atoms of $M_{\alpha, \beta}$?
	\end{question}
\end{example}
\medskip

\subsection{The Cones of Monoids in $\mathcal{C}$} Let $\ff$ be an ordered field. We say that a monoid~$M$ is \emph{embedded} into a vector space $V$ over $\ff$ if $M$ is a submonoid of the additive group~$V$. As mentioned in the introduction, a monoid $M$ in $\mathcal{C}$ can be embedded into the finite-dimensional $\ff$-vector space $\ff \otimes_\zz \gp(M)$ via
\begin{equation} \label{eq:main embedding}
	M \hookrightarrow \gp(M) \hookrightarrow \ff \otimes_\zz \gp(M),
\end{equation}
where the flatness of $\ff$ as a $\zz$-module ensures the injectivity of the second homomorphism. Thus, one can always think of a monoid $M$ in $\mathcal{C}$ being embedded into $\ff \otimes_\zz \gp(M)$ and, therefore, one can refer to the conic hull of $M$.

 Let $V$ be a finite-dimensional vector space over $\ff$. A cone $C$ in~$V$ is called \emph{positive} with respect to a basis $\beta$ of $V$ provided that each $v \in C$ can be expressed as a nonnegative linear combination of the vectors in $\beta$. When $V = \ff^d$ for some $d \in \nn^\bullet$, we say that $C$ is \emph{positive} in $V$ (without specifying any basis) if $C$ is a positive cone with respect to the basis consisting of the canonical vectors $e_1, \dots, e_d$.

\begin{lemma} \label{lem:cones of monoids in C are positive}
	Let $V$ be a finite-dimensional vector space over an ordered field $\ff$, and let $M$ be a monoid in $\mathcal{C}$ embedded into $V$. Then there exists a basis $\beta$ of $V$ such that $\cone_V(M)$ is positive with respect to~$\beta$.
\end{lemma}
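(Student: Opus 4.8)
The plan is to reduce the statement to the standard realization of $M$ as a maximal-rank submonoid of $(\nn^d,+)$, where the relevant cone sits inside the positive orthant, and then transport the positive structure back to $V$ along a linear isomorphism. Since being positive with respect to a basis is preserved by linear isomorphisms (one simply pulls the basis back), this lets me borrow the obvious positive basis of $\ff^d$ and convert the problem into bookkeeping about induced linear maps.

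First I would restrict attention to $W := \lin_V(M)$, the $\ff$-span of $M$ inside $V$. As $W$ is a subspace, and hence closed under nonnegative linear combinations, one has $\cone_V(M) \subseteq W$ and in fact $\cone_V(M) = \cone_W(M)$. Moreover, the natural map $\ff \otimes_\zz \gp(M) \to W$ sending $c \otimes g \mapsto cg$ is an isomorphism: it is surjective by the definition of $W$, and injective because $\ff$ is flat over $\zz$ and $\gp(M)$ is torsion-free of finite rank. In particular $\dim_\ff W = \rk(M) =: d$.

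Next, by Proposition~\ref{prop:conditions defining monoids in C}(c) I fix a monoid isomorphism $\varphi\colon M \to M'$, where $M'$ is a rank-$d$ submonoid of $(\nn^d,+)$. Applying the Grothendieck group functor and then $\ff \otimes_\zz -$ yields an $\ff$-linear isomorphism $\ff \otimes_\zz \gp(M) \to \ff \otimes_\zz \gp(M')$. Since $\gp(M') \subseteq \zz^d$ is a subgroup of rank $d$, the monoid $M'$ contains $d$ linearly independent vectors, so the natural map $\ff \otimes_\zz \gp(M') \to \ff^d$ is an isomorphism as well. Composing these identifications produces a linear isomorphism $\Psi\colon W \to \ff^d$ satisfying $\Psi(m) = \varphi(m)$ for every $m \in M$; consequently $\Psi\big(\cone_W(M)\big) = \cone_{\ff^d}(M')$, which is contained in the positive orthant $\{x \in \ff^d \mid x_i \ge 0 \text{ for all } i \in \ldb 1,d \rdb\}$ because $M' \subseteq \nn^d$.

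Finally I would pull back the positive structure and extend the basis. Setting $w_i := \Psi^{-1}(e_i)$ for $i \in \ldb 1,d \rdb$, the set $\{w_1, \dots, w_d\}$ is a basis of $W$, and for any $v \in \cone_V(M) = \cone_W(M)$ one has $\Psi(v) = \sum_{i=1}^d c_i e_i$ with each $c_i \ge 0$, whence $v = \sum_{i=1}^d c_i w_i$. I then extend $\{w_1, \dots, w_d\}$ to a basis $\beta$ of $V$ by adjoining vectors $w_{d+1}, \dots, w_{\dim V}$; assigning coefficient $0$ to each adjoined vector shows that every element of $\cone_V(M)$ remains a nonnegative combination of $\beta$, so $\cone_V(M)$ is positive with respect to $\beta$. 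The only genuinely delicate point is the rank bookkeeping: it is precisely the \emph{maximality} of the rank in Proposition~\ref{prop:conditions defining monoids in C}(c) that forces $\Psi$ to land in all of $\ff^d$ rather than in a proper subspace, which is what makes $\cone_{\ff^d}(M')$ positive with respect to the full standard basis and thereby drives the argument.
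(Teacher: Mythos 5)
Your proof is correct and takes essentially the same route as the paper's: both invoke Proposition~\ref{prop:conditions defining monoids in C} to replace $M$ by a maximal-rank submonoid $M'$ of $(\nn^d,+)$, extend the monoid isomorphism (via Grothendieck groups and tensoring with $\ff$) to a linear identification of $\lin_V(M)$ (the paper writes $\aff_V(M)$, which is the same subspace since $0 \in M$) with $\ff^d$, transport the positive-orthant structure back along this map, and finally extend the resulting basis of the span to a basis of all of $V$. The only difference is that you make explicit the injectivity and dimension bookkeeping that the paper's proof leaves implicit.
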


\begin{proof}
	Set $d := \rank(M)$. Since $M$ is in $\mathcal{C}_d$, Proposition~\ref{prop:conditions defining monoids in C} guarantees the existence of a submonoid $M'$ of $(\nn^d,+)$ and a monoid isomorphism $\varphi \colon M' \to M$. Extending $\varphi$ to the Grothendieck groups of $M'$ and $M$, and then tensoring with the $\zz$-module $\ff$, we obtain a linear transformation $\bar{\varphi} \colon \ff^d \to \aff_V(M)$ that extends $\varphi$. It is clear that $\bar{\varphi}(\cone_{\ff^d}(M')) = \cone_V(M) \subseteq \aff_V(M)$. Because $\cone_{\ff^d}(M')$ is a positive cone in $\ff^d$, it follows that $\cone_V(M)$ is positive in $\aff_V(M)$ with respect to the basis $\bar{\varphi}(e_1), \dots, \bar{\varphi}(e_d)$. Finally, extending the basis $\bar{\varphi}(e_1), \dots, \bar{\varphi}(e_d)$ to a basis $\beta$ of $V$ one obtains that $\cone_V(M)$ is positive in $V$ with respect to $\beta$.
\end{proof}

The cones of monoids in $\mathcal{C}$ are pointed, as we argue now.

\begin{prop} \label{prop:cones of members of C are pointed}
	Let $V$ be a finite-dimensional vector space over an ordered field $\ff$, and let $M$ be a monoid in $\mathcal{C}$ embedded into $V$. The following statements hold.
	\begin{enumerate}
		\item The cone $\cone_V(M)$ is pointed.
		\smallskip
		
		\item If $\ff$ is an intermediate field of the extension $\qq \subseteq \rr$, then the cone $\overline{\cone_V(M)}$ is pointed.
	\end{enumerate} 
\end{prop}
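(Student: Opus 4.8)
The plan is to deduce both statements directly from Lemma~\ref{lem:cones of monoids in C are positive}, which provides a basis $\beta = \{b_1, \dots, b_d\}$ of $V$, with $d = \dim_\ff V$, such that $\cone_V(M)$ is positive with respect to $\beta$; that is, every element of $\cone_V(M)$ is a nonnegative $\ff$-linear combination of $b_1, \dots, b_d$. Positivity against a \emph{basis} is exactly the leverage needed, since the linear independence of $\beta$ prevents any cancellation among nonnegative coefficients.

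For part~(1) I would argue purely algebraically. Suppose $v \in \cone_V(M) \cap -\cone_V(M)$. Then $v \in \cone_V(M)$ gives $v = \sum_{i=1}^d c_i b_i$ with each $c_i \ge 0$ in $\ff$, and $-v \in \cone_V(M)$ gives $-v = \sum_{i=1}^d d_i b_i$ with each $d_i \ge 0$. Adding these yields $\sum_{i=1}^d (c_i + d_i) b_i = 0$, and because $\beta$ is linearly independent each $c_i + d_i = 0$; as the coefficients are nonnegative, this forces $c_i = d_i = 0$ for all $i$, whence $v = 0$. Thus $\cone_V(M) \cap -\cone_V(M) = \{0\}$, so the cone is pointed. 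This argument uses only the order structure of $\ff$ together with the linear independence of $\beta$, so it is valid over an arbitrary ordered field.

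For part~(2) the idea is that positivity with respect to $\beta$ survives passage to the Euclidean closure once $\qq \subseteq \ff \subseteq \rr$ and $V$ is regarded as sitting inside $\rr^d$. First I would observe that $\beta$, being $\ff$-linearly independent, remains $\rr$-linearly independent: the change-of-basis matrix it determines has a nonzero determinant lying in $\ff \subseteq \rr$. Consequently the set $P := \{\sum_{i=1}^d c_i b_i \mid c_i \in \rr_{\ge 0}\}$ is the image of the closed orthant $[0,\infty)^d$ under a linear homeomorphism of $\rr^d$, hence a closed, pointed cone (its pointedness being the real instance of the computation in part~(1)). Since $\cone_V(M) \subseteq P$ and $P$ is closed, taking Euclidean closures gives $\overline{\cone_V(M)} \subseteq \overline{P} = P$, and therefore $\overline{\cone_V(M)} \cap -\overline{\cone_V(M)} \subseteq P \cap (-P) = \{0\}$. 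Thus $\overline{\cone_V(M)}$ is pointed.

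The only genuinely delicate point, and the step I would treat most carefully, is this transition from $\ff$ to $\rr$ in part~(2): one must confirm that an $\ff$-basis of $V$ stays linearly independent over $\rr$, so that $P$ is truly closed and pointed in $\rr^d$, and that the inclusion $\cone_V(M) \subseteq P$ is preserved under taking closures. Everything else follows immediately from the positivity furnished by Lemma~\ref{lem:cones of monoids in C are positive}.
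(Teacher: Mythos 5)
Your proof is correct and follows essentially the same route as the paper's: both parts rest on Lemma~\ref{lem:cones of monoids in C are positive}, enclosing $\cone_V(M)$ (and hence its closure) in the simplicial cone generated by the positive basis $\beta$, whose pointedness is exactly your coefficient computation in part~(1). The one small refinement is in part~(2), where you pass to the cone $P$ of \emph{real} nonnegative combinations of $\beta$; this set is genuinely closed in $\rr^d$, whereas the paper's $\cone_V(\beta)$ (nonnegative $\ff$-combinations) is closed only in the subspace topology of $V$ when $\ff \subsetneq \rr$, so your variant removes any ambiguity about the ambient space in which the closure is taken.
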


\begin{proof}
	To argue~(1) suppose, without loss of generality, that $\dim V = d$. By Lemma~\ref{lem:cones of monoids in C are positive} there exists a basis $\beta$ of $V$ such that $\cone_V(M) \subseteq \cone_V(\beta)$. Clearly, $\cone_V(\beta)$ is a simplicial cone and, therefore, it does not contain any one-dimensional subspace of $V$. This implies that $\cone_V(M)$ does not contain any one-dimensional subspace of $V$, and so $\cone_V(M)$ is pointed.
	\smallskip
	
	To argue~(2) we first notice that (with notation as in the previous paragraph) $\cone_V(\beta)$ is a closed cone of $V$ containing $\cone_V(M)$. So $\overline{\cone_V(M)} \subseteq \cone_V(\beta)$. As observed in the previous paragraph, the fact that $\cone_V(\beta)$ is pointed forces $\overline{\cone_V(M)}$ to be pointed.
\end{proof}

A \emph{lattice} is a partially ordered set $L$, in which every two elements have a unique \emph{join} (i.e., least upper bound) and a unique \emph{meet} (i.e., greatest lower bound). The lattice~$L$ is \emph{complete} if each $S \subseteq L$ has both a join and a meet. Two complete lattices are \emph{isomorphic} if there exists a bijection between them that preserves joins and meets. For background information on (complete) lattices and lattice homomorphisms, see~\cite[Chapter~2]{DP02}.

Let $V$ be a finite-dimensional vector space over an ordered field $\ff$. For a cone $C$ in~$V$, the set of faces of~$C$, denoted by $\mathsf{F}(C)$, is a complete lattice (under inclusion)~\cite[page~164]{rtR70}, where the meet is given by intersection and the join of a given set of faces is the smallest face in $\mathsf{F}(C)$ containing all of them. The lattice $\mathsf{F}(C)$ is called the \emph{face lattice} of $C$. Two cones $C$ and $C'$ are \emph{combinatorially equivalent} provided that their face lattices are isomorphic.

It turns out that the combinatorial and geometric structures of $\cone_V(M)$ do not depend on the embedding proposed in~(\ref{eq:main embedding}).

\begin{prop} \label{prop:combinatorial and geometric equivalence of cones}
	Let $V$ and $V'$ be finite-dimensional vector spaces over an ordered field $\ff$, and let $M$ and $M'$ be isomorphic monoids in $\mathcal{C}$ embedded into $V$ and $V'$, respectively. The following statements hold.
	\begin{enumerate}
		\item The cone $\cone_V(M)$ is affinely equivalent to the cone $\cone_{V'}(M')$.
		\smallskip
		
		\item The cone $\cone_V(M)$ is combinatorially equivalent to the cone $\cone_{V'}(M')$.
		\smallskip
		
		\item The cone $\cone_V(M)$ is homeomorphic to the cone $\cone_{V'}(M')$ when $\qq \subseteq \ff \subseteq \rr$.
	\end{enumerate}
\end{prop}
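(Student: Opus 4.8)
The plan is to derive all three statements from a single object: an invertible $\ff$-linear map $\Phi \colon \aff_V(\cone_V(M)) \to \aff_{V'}(\cone_{V'}(M'))$ carrying $M$ onto $M'$. Statement (1) is then immediate from the definition of affine equivalence, while statements (2) and (3) follow by checking that any such $\Phi$ respects the face lattice and is a homeomorphism, respectively.

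To construct $\Phi$, let $\phi \colon M \to M'$ be a monoid isomorphism. By the universal property of the Grothendieck group, $\phi$ extends uniquely to a group isomorphism $\gp(\phi) \colon \gp(M) \to \gp(M')$. Since $M$ is embedded into $V$, the injective homomorphism $M \hookrightarrow V$ factors through $\gp(M)$, and the induced map $\gp(M) \to V$ is injective (if $a - b \mapsto 0$ with $a, b \in M$, then $a = b$ in $V$, hence in $M$); thus we identify $\gp(M)$ with the subgroup of $V$ generated by $M$, and likewise $\gp(M') \subseteq V'$. As $M \in \mathcal{C}$, the group $\gp(M)$ is free abelian of rank $d := \rk(M)$. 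Fix a $\zz$-basis $u_1, \dots, u_d$ of $\gp(M)$. Because integer vectors that are $\zz$-linearly independent remain $\qq$-linearly independent and $\qq \subseteq \ff$, the vectors $u_1, \dots, u_d$ are $\ff$-linearly independent in $V$; since they span $\lin_V(M) = \aff_V(\cone_V(M))$, which has dimension $d$, they form an $\ff$-basis of it. The same reasoning shows that $\gp(\phi)(u_1), \dots, \gp(\phi)(u_d)$ is an $\ff$-basis of $\aff_{V'}(\cone_{V'}(M'))$. I would then let $\Phi$ be the unique $\ff$-linear map with $\Phi(u_i) = \gp(\phi)(u_i)$; it is an isomorphism, as it sends one basis to another. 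Since $\Phi$ and $\gp(\phi)$ are both $\zz$-linear and agree on $u_1, \dots, u_d$, they agree on all of $\gp(M)$, so $\Phi(M) = \phi(M) = M'$. Linearity now gives $\Phi(\cone_V(M)) = \cone_{V'}(\Phi(M)) = \cone_{V'}(M')$, which is exactly statement (1).

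For (2), I would check that $\Phi$ induces an isomorphism of face lattices. Being an invertible linear map, $\Phi$ sends cones to cones and carries each open line segment onto an open line segment; hence for a face $F$ of $\cone_V(M)$, the image $\Phi(F)$ satisfies the defining property of a face of $\cone_{V'}(M')$ (pulling the two endpoints back through $\Phi^{-1}$). Applying the same argument to $\Phi^{-1}$, the maps $F \mapsto \Phi(F)$ and $F' \mapsto \Phi^{-1}(F')$ are mutually inverse, inclusion-preserving bijections between $\mathsf{F}(\cone_V(M))$ and $\mathsf{F}(\cone_{V'}(M'))$, and therefore constitute a lattice isomorphism (they preserve intersections, i.e.\ meets, and consequently joins). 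This is combinatorial equivalence.

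For (3), assume $\qq \subseteq \ff \subseteq \rr$. With respect to bases, $\Phi$ is represented by an $\ff$-matrix, so it extends to a real-linear map of the ambient Euclidean spaces, which is continuous; the same holds for $\Phi^{-1}$. Restricting to the subspace topologies, $\Phi$ is a homeomorphism of $\aff_V(\cone_V(M))$ onto $\aff_{V'}(\cone_{V'}(M'))$ taking $\cone_V(M)$ bijectively onto $\cone_{V'}(M')$, so the two cones are homeomorphic. The only delicate point in the whole argument is the construction of $\Phi$ in the second paragraph, namely ensuring it is a well-defined $\ff$-linear isomorphism of the affine hulls that restricts to $\phi$; this hinges on passing from a $\zz$-basis of $\gp(M)$ to an $\ff$-basis of $\lin_V(M)$, which uses that $\zz$-independence of integer vectors implies $\qq$-independence. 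Once $\Phi$ is available, statements (2) and (3) are routine consequences of linearity and continuity.
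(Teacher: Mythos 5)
Your strategy is the same as the paper's: extend the monoid isomorphism to the Grothendieck groups, promote it to an invertible $\ff$-linear map between the (affine hulls of the) ambient spaces, read off (1), and then obtain (2) and (3) because invertible linear maps preserve faces and, when $\qq \subseteq \ff \subseteq \rr$, restrict to homeomorphisms. Your treatments of (2) and (3) are correct and essentially identical to the paper's (your argument for (3), extending the matrix of $\Phi$ to a real-linear map, is if anything cleaner than the paper's density remark). The one difference is how the linear extension is built: the paper tensors $\gp(\phi)$ with the $\zz$-module $\ff$, whereas you pick a $\zz$-basis $u_1,\dots,u_d$ of $\gp(M)$ and extend $\ff$-linearly.

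That construction is exactly where your proof has a genuine gap, at the step you yourself single out as delicate. The assertion that $u_1,\dots,u_d$ are $\ff$-linearly independent in $V$ is not justified by ``integer vectors that are $\zz$-linearly independent remain $\qq$-linearly independent'': the $u_i$ are elements of an abstract $\ff$-vector space, not vectors with integer coordinates, and $\zz$-linear independence in $(V,+)$ does not imply $\ff$-linear independence. For example, $1$ and $\sqrt{2}$ form a $\zz$-basis of the rank-$2$ free abelian group they generate inside $V=\rr$, yet they are $\rr$-linearly dependent; correspondingly, $M=\{a+b\sqrt{2} \mid a,b \in \nn\}$ is a submonoid of $(\rr,+)$ isomorphic to $(\nn^2,+)$ for which $\lin_V(M)$ has dimension $1 \neq 2 = \rk(M)$, so your subsequent claim that the $u_i$ span a $d$-dimensional space fails as well. (What clearing denominators gives is $\qq$-independence; the passage from $\qq$ to a larger ordered field $\ff$ is precisely what can break.) What is actually needed is that the embedding $M \hookrightarrow V$ is full, in the sense that the induced map $\ff \otimes_\zz \gp(M) \to V$ is injective, equivalently $\dim \lin_V(M) = \rk(M)$. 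To be fair, the paper's own proof assumes the same thing tacitly: it writes $\ff \otimes_\zz \gp(M') \subseteq V'$ without argument, and its flatness remark only yields injectivity of $V \to \ff \otimes_\zz \gp(M')$, not of the map $\ff \otimes_\zz \gp(M') \to V'$. So this fullness condition is really an implicit hypothesis on what ``embedded'' means. Still, as written your justification is a non sequitur rather than an implicit hypothesis: you should either state this property of the embeddings explicitly and use it, or flag it as part of the intended meaning of ``embedded''; it cannot be derived from the bare assumption that $M$ is a submonoid of $(V,+)$.
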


\begin{proof}
	For cones, being affinely equivalent, being combinatorially equivalent, and being homeomorphic are all equivalence relations. Hence it suffices to argue the proposition assuming that $M$ is a submonoid of $(\nn^d,+)$ and that $V = \ff^d$ for some $d \in \nn^\bullet$.
	
	Since $M \subseteq \nn^d$, we can further assume that $\gp(M) \subseteq \zz^d$. Let $\varphi \colon M \to M'$ be a monoid isomorphism. Extending $\varphi$ to the Grothendieck groups of $M$ and $M'$, and then tensoring with the $\zz$-module $\ff$, one obtains a linear transformation $\bar{\varphi} \colon V \to \ff \otimes_\zz \gp(M') \subseteq V'$ that extends $\varphi$. Since $\ff$ is a flat $\zz$-module, $\ker \bar{\varphi}$ is trivial and, therefore, $\bar{\varphi}$ is an injective linear transformation satisfying that
	\[
		\bar{\varphi}(\cone_V(M)) = \cone_{\bar{\varphi}(V)}(M') = \cone_{V'}(M').
	\]
	As a consequence, the cones $\cone_V(M)$ and $\cone_{V'}(M')$ are affinely equivalent, from which~(1) follows.
	\smallskip
	
	To argue~(2) it suffices to observe that because the map $\bar{\varphi}$ is a linear bijection taking $\cone_V(M)$ onto $\cone_{V'}(M')$, the assignment $F \mapsto \bar{\varphi}(F)$ for any face $F$ of $\mathsf{F}(\cone_V(M))$ is an order-preserving bijection from the face lattice $\mathsf{F}(\cone_V(M))$ to the face lattice $\mathsf{F}(\cone_{V'}(M'))$.
	\smallskip
	
	To verify~(3), suppose that $\qq \subseteq \ff \subseteq \rr$. Then part~(1) guarantees the existence of an invertible linear transformation $\bar{\varphi} \colon \aff_V(\cone_V(M)) \to \aff_{V'}(\cone_{V'}(M'))$ mapping $\cone_V(M)$ to $\cone_{V'}(M')$. As $\qq \subseteq \ff \subseteq \rr$, the subspace $\ff^d$ is dense in $\rr^d$ with respect to the Euclidean topology. Thus, the map $\bar{\varphi}$ must be a homeomorphism and, therefore, $\cone_{V}(M)$ and $\cone_{V'}(M')$ are homeomorphic.
\end{proof}

For a monoid $M$ in $\mathcal{C}$ and a finite-dimensional vector space $V$ over an ordered field~$\ff$, Proposition~\ref{prop:combinatorial and geometric equivalence of cones} guarantees that the geometric and combinatorial aspects, and the topological aspects (when $\qq \subseteq \ff \subseteq \rr$) of $\cone_V(M)$ do not depend on $V$ but only on $\ff$. As a consequence, we shall sometimes write $\cone_\ff(M)$ instead of $\cone_V(M)$, choosing without specifying the finite-dimensional vector space over $\ff$ where the conic hull of~$M$ is embedded into.

\begin{cor} \label{cor:rank equal dimension}
	Let $\ff$ be an ordered field, and let $M$ be a monoid in $\mathcal{C}$. Then the equality $\dim \cone_\ff(M) = \emph{rank}(M)$ holds.
\end{cor}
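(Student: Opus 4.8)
The plan is to reduce the computation of $\dim \cone_\ff(M)$ to a purely linear-algebraic statement about the $\ff$-span of $M$. First I would recall that for any cone $C$ in a vector space $V$ over $\ff$ one has $\dim C = \dim \aff_V(C)$, and that the background identity $\lin_V(S) = \aff_V(\cone_V(S))$ specializes to $\aff_V(\cone_V(M)) = \lin_V(M)$. Hence $\dim \cone_\ff(M) = \dim \lin_V(M)$, and the task becomes identifying the smallest $\ff$-subspace containing $M$. Invoking the remark following Proposition~\ref{prop:combinatorial and geometric equivalence of cones} (whose part~(1) already guarantees that $\dim \cone_\ff(M)$ is independent of the chosen embedding into a vector space over $\ff$), I would work with the canonical embedding $M \hookrightarrow V := \ff \otimes_\zz \gp(M)$ from~(\ref{eq:main embedding}).

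Next I would show that $\lin_V(M) = V$. The key point is that $M$ generates $\gp(M)$ as a group: every element of $\gp(M)$ is a difference $m_1 - m_2$ with $m_1, m_2 \in M$. Since $\lin_V(M)$ is an $\ff$-subspace, it is in particular a subgroup of $V$ closed under negation, so it contains all such differences and therefore contains the image of $\gp(M)$ in $V$. Consequently $\lin_V(M) \supseteq \lin_V(\gp(M)) = V$, the last equality holding because $V = \ff \otimes_\zz \gp(M)$ is spanned over $\ff$ by the elements $1 \otimes g$ with $g \in \gp(M)$. The reverse inclusion $\lin_V(M) \subseteq V$ is immediate, so $\lin_V(M) = V$.

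It then remains to check that $\dim_\ff V = \rank(M)$. Because $M$ lies in $\mathcal{C}$, Proposition~\ref{prop:conditions defining monoids in C} embeds $M$ into a finite-rank free commutative monoid $F$, so $\gp(M)$ is a subgroup of $\gp(F) \cong \zz^r$ for some $r$; being a subgroup of a finitely generated free abelian group, $\gp(M)$ is itself free of finite rank, say $\gp(M) \cong \zz^d$, where $d = \rank(M)$ by the definition of rank. Tensoring with $\ff$ then yields $V = \ff \otimes_\zz \gp(M) \cong \ff^d$, whence $\dim_\ff V = d = \rank(M)$, which combined with the two previous paragraphs completes the argument.

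I do not expect a serious obstacle; the result is essentially bookkeeping once the right reductions are in place. The one point demanding care is the passage from the group-theoretic rank of $\gp(M)$, defined via $\qq \otimes_\zz \gp(M)$, to the $\ff$-dimension of $\ff \otimes_\zz \gp(M)$ for an arbitrary ordered field $\ff$; this is exactly where the finite generation and freeness of $\gp(M)$ are used, since the isomorphism $\gp(M) \cong \zz^d$ makes the dimension of the tensored space independent of the coefficient field and equal to $\rank(M)$.
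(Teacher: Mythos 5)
Your proof is correct, and its skeleton coincides with the paper's: both reduce the claim to the chain $\dim \cone_V(M) = \dim \lin_V(M) = \dim_\ff V$ together with the identity $\dim_\ff V = \rank(M)$, where $V = \ff \otimes_\zz \gp(M)$ is the canonical embedding from~(\ref{eq:main embedding}). The divergence is in how that last identity is justified. You invoke membership in $\mathcal{C}$ through Proposition~\ref{prop:conditions defining monoids in C}: $\gp(M)$ embeds in $\gp(F) \cong \zz^r$, hence is free of finite rank $d$, so $V \cong \ff^d$ and $d = \rank(M)$. The paper instead observes that $V$ is obtained from $\qq \otimes_\zz \gp(M)$ by extension of scalars from $\qq$ to $\ff$, which preserves dimension; this needs only that $\gp(M)$ is torsion-free of finite rank, not that it is free, so the paper's step would remain valid for an arbitrary finite-rank torsion-free monoid, whereas yours genuinely uses $M \in \mathcal{C}$. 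In exchange, your route is more concrete (it produces an explicit isomorphism $V \cong \ff^d$), and you also spell out the steps the paper compresses into an unexplained chain of equalities — namely that $\aff_V(\cone_V(M)) = \lin_V(M)$, and that $\lin_V(M) = V$ because every element of $\gp(M)$ is a difference of two elements of $M$ and a subspace is closed under such differences. Both arguments are complete; neither has a gap.
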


\begin{proof}
	Consider the $\ff$-vector space $V := \ff \otimes_{\zz} \gp(M)$. As the dimension of the $\qq$-vector space $\qq \otimes_{\zz} \gp(M)$ equals $\rk(M)$, and the vector space $V$ can be obtained from $\qq \otimes_\zz \gp(M)$ by extending scalars from $\qq$ to $\ff$, the equality $\dim V = \rank(M)$ holds. Therefore
	\[
		\dim \cone_V(M) = \dim \aff_V(M) = \dim \aff_V(\gp(M)) = \dim \ff \otimes_{\zz} \gp(M) = \rk(M).
	\]
\end{proof}

Members of $\mathcal{C}$ are finite-rank torsion-free monoids. However, not every finite-rank torsion-free monoid is in $\mathcal{C}$. The next two examples shed some light upon this observation.

\begin{example}
	A nontrivial submonoid $M$ of $(\qq_{\ge 0},+)$ is obviously a rank-$1$ torsion-free monoid. It follows from Proposition~\ref{prop:conditions defining monoids in C} that $M$ belongs to $\mathcal{C}$ if and only if $M$ is isomorphic to a numerical monoid. Hence~\cite[Proposition~3.2]{fG17} guarantees that~$M$ is in~$\mathcal{C}$ if and only if $M$ is finitely generated. As a result, non-finitely generated submonoids of $(\qq_{\ge 0},+)$ such as $\langle 1/p \mid p \text{ is prime} \rangle$ are finite-rank torsion-free monoids that do not belong to the class $\mathcal{C}$. The atomic structure of submonoids of $(\qq_{\ge 0},+)$ have been studied in \cite{fG19,fG19b} and some of their factorization invariants have been investigated in~\cite{CGG20,GO17}. In addition, their monoid algebras were recently considered in~\cite{CG19,fG20}.
\end{example}

Clearly, the Grothendieck group of a non-finitely generated submonoid of $(\qq_{\ge 0},+)$ cannot be free. The following example, courtesy of Winfried Bruns, shows that a finite-rank torsion-free monoid may not belong to $\mathcal{C}$ even though its Grothendieck group is free.

\begin{example}
	Consider the additive monoid
	\[
		M := \{(0,0)\} \cup \{(m,n) \in \zz^2 \mid n > 0\} \subseteq \zz^2.
	\]
	Note that $M$ is a submonoid of $(\zz^2,+)$ and, therefore, it has rank at most~$2$. In addition, it is clear that $M$ is torsion-free. On the other hand,
	\[
		\overline{\cone_{\rr^2}(M)} = \{(x,y) \in \rr^2 \mid y \ge 0\},
	\]
	which is the upper closed half-space. Thus, the cone $\overline{\cone_{\rr^2}(M)}$ is not pointed. As a consequence, it follows from Proposition~\ref{prop:cones of members of C are pointed} that $M$ does not belong to~$\mathcal{C}$.
\end{example}

\smallskip
\subsection{Cones Realized by Monoids in $\mathcal{C}$}
\label{sec:cones realized}

Our next goal is to characterize the positive cones that can be realized as conic hulls of monoids in~$\mathcal{C}$. First, let us argue the following lemma.

\begin{lemma} \label{lem:polyhedral cone containing an interior ray of a convex cone}
	Let $\ff$ be an intermediate field of the extension $\qq \subseteq \rr$, and let $C$ be a $d$-dimensional positive cone in $\ff^d$ for some $d \in \nn$. For each $x \in \inter \, C$, there exists a $d$-dimensional rational simplicial cone $C_x$ such that $\ff_{> 0} x \subseteq \inter \, C_x$ and $C_x \subseteq \{0\} \bigcup \inter \, C$.
\end{lemma}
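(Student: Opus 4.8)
The plan is to build $C_x$ in two stages: first I would produce a (possibly irrational) $d$-dimensional simplicial cone whose interior contains the ray $\ff_{>0}x$ and which is contained in $\{0\}\cup\inter C$, and then I would deform its generators to integer vectors, exploiting that all the constraints involved are open conditions. Before the construction I would record three elementary facts. Since $C$ is positive, $C$ is contained in the orthant $O:=\{y\in\ff^d\mid y_i\ge 0 \text{ for all } i\}$, so $\inter C\subseteq \inter O=\{y\in\ff^d\mid y_i>0 \text{ for all } i\}$; in particular $x$ has strictly positive coordinates and $x\neq 0$. Next, because multiplication by any $t\in\ff_{>0}$ is a homeomorphism of $\ff^d$ mapping the convex cone $C$ onto itself, it maps $\inter C$ onto itself, so $\inter C$ is invariant under positive $\ff$-scaling. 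Finally, and most usefully, $\inter C+C\subseteq\inter C$: if $u\in\inter C$, say $\{y\in\ff^d\mid\norm{y-u}<\varepsilon\}\subseteq C$, and $w\in C$, then translating this ball by $w$ and using that $C$ is closed under addition shows $u+w\in\inter C$. Combining the last two facts, if linearly independent vectors $v_1,\dots,v_d$ all lie in $\inter C$, then every nonzero point $\sum_i c_i v_i$ (with $c_i\ge 0$) lies in $\inter C$: choosing $k$ with $c_k>0$, we have $c_kv_k\in\inter C$ and $\sum_{i\neq k}c_iv_i\in C$, whence the sum is in $\inter C+C\subseteq\inter C$. Thus the containment $C_x\subseteq\{0\}\cup\inter C$ is automatic as soon as the generators of $C_x$ are placed in $\inter C$.

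For the first stage I would fix $\delta>0$ with $\{y\in\ff^d\mid \norm{y-x}<\delta\}\subseteq\inter C$, set $z_i:=e_i$ for $i\in\ldb 1,d-1\rdb$ and $z_d:=-(e_1+\cdots+e_{d-1})$ so that $\sum_i z_i=0$, and for a small $\eta\in\qq_{>0}$ put $w_i:=x+\eta z_i$. Then $w_i\in\inter C$ for $\eta$ small, and $\frac1d\sum_i w_i=x$ exhibits $x$ as a strictly positive combination of the $w_i$. A direct column reduction (subtracting the last column from the others and expanding along the bottom row) gives $\det[w_1\,|\cdots|\,w_d]=d\,x_d\,\eta^{d-1}$, which is nonzero because $x_d>0$ and $\eta\neq 0$; hence the $w_i$ are linearly independent and $x\in\inter\cone(w_1,\dots,w_d)$.

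For the second stage I would pass to integer generators. The three conditions ``$v_i\in\inter C$'', ``$v_1,\dots,v_d$ are linearly independent'', and ``$x$ has strictly positive coordinates in the basis $v_1,\dots,v_d$'' are each open in $(v_1,\dots,v_d)$ — the last because these coordinates are the continuous map $(v_1,\dots,v_d)\mapsto [v_1\,|\cdots|\,v_d]^{-1}x$ on the locus where the determinant is nonzero, and at $(w_1,\dots,w_d)$ they all equal $\tfrac1d>0$. Since $\qq^d$ is dense in $\ff^d$, I would choose rational $v_i$ close enough to $w_i$ to preserve all three conditions, and then scale each $v_i$ by a suitable positive integer to clear denominators; scaling the generators by positive scalars changes neither the cone they generate nor any of the three properties. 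The resulting $C_x:=\cone(v_1,\dots,v_d)$ is then a $d$-dimensional rational simplicial cone with $x\in\inter C_x$; since $\inter C_x$ is invariant under positive $\ff$-scaling, $\ff_{>0}x\subseteq\inter C_x$; and since each $v_i\in\inter C$, the discussion in the first paragraph yields $C_x\subseteq\{0\}\cup\inter C$.

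The main obstacle is obtaining a genuinely \emph{simplicial} cone, i.e.\ exactly $d$ linearly independent generators, rather than the $d+1$ vertices of an easy small rational simplex surrounding $x$; this is what forces the symmetric choice of the $z_i$ (so that the average of the $w_i$ equals $x$ exactly) together with the determinant check. The passage to integer generators is then unavoidable precisely because $x$ need not be rational, so the exact relation $x=\frac1d\sum_i w_i$ cannot be maintained with rational $w_i$; the openness of the three defining conditions is what lets this final step go through.
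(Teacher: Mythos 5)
Your proof is correct, but it reaches the conclusion by a genuinely different mechanism than the paper's. The paper works in the opposite order: it fixes a rational point $q$ with $q_i < x_i$ for all $i$ and $\sum_{i=1}^d (x_i - q_i) < 1/N$, forms the translated scaled standard simplex $\Delta := q + \Delta_d/N$, all of whose $d+1$ vertices $q,\, q+e_1/N, \dots, q+e_d/N$ are rational, checks via a diameter-versus-distance estimate ($\mathrm{diam}(\Delta) < \ell$, the distance from $x$ to the complement of $\inter \, C$) that $\Delta \subset \inter \, C$ and $x \in \inter \, \Delta$, and sets $C_x := \cone_{\ff^d}(\Delta)$. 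Rationality is thus built in from the start, and the delicate point is simpliciality, which the paper obtains by observing that the vertex $q$ lies in the interior of the cone generated by the other $d$ vertices, so only the $d$ rays through the points $q + e_i/N$ are extreme. You proceed in reverse: simpliciality is built in from the start (your symmetric choice $\sum_i z_i = 0$ yields exactly $d$ linearly independent generators whose average is exactly $x$, confirmed by the determinant $d\,x_d\,\eta^{d-1} \neq 0$), and rationality is the delicate step, handled by your perturbation argument combining the openness of the three conditions with the density of $\qq^d$. Notably, the ``obstacle'' you flag at the end --- that the cone over a small rational simplex with $d+1$ vertices surrounding $x$ is not obviously simplicial --- is precisely the route the paper takes, resolved there by showing the extra generating ray is redundant. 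Your approach buys a clean separation of the convexity facts ($\inter \, C + C \subseteq \inter \, C$ and invariance under positive scaling) from the rationality issue and avoids metric estimates; the paper's buys a fully explicit construction with no perturbation and no appeal to continuity of matrix inversion.
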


\begin{proof}
	Take $x \in \inter \, C$. For $d=1$, it suffices to set $C_x = \ff_{\ge 0} x$. Then suppose that $d \ge 2$ and write $x = (x_1, \dots, x_d) \in \ff^d$. As~$C$ is positive and $x \in \inter \, C$, one sees that $x_i > 0$ for every $i \in \ldb 1,d \rdb$. Let $\ell$ be the distance from~$x$ to the complement of $\inter \, C$. Since $\inter \, C$ is open, $\ell > 0$. Consider the $d$-dimensional regular simplex
	\[
		\Delta_d := \ff_{\ge 0}^d \bigcap \bigg\{ (y_1, \dots, y_d) \in \ff^d \ \bigg{|} \ \sum_{i=1}^d y_i  \le 1 \bigg\},
	\]
	and then choose $N \in \nn$ sufficiently large such that $\text{diam}(\Delta_d/N) = \sqrt{2}/N < \ell$. In addition, take $q = (q_1,\dots, q_d) \in \qq^d_{> 0}$ such that $q_i < x_i $ for every $i \in \ldb 1,d \rdb$ and $\sum_{i=1}^d (x_i - q_i) < 1/N$. Now set $\Delta := q + \Delta_d/N$. Clearly, $x-q$ is an interior point of $\Delta_d/N$ and, therefore, $x \in \inter \, \Delta$. This, along with the fact that $\text{diam}(\Delta) = \text{diam}(\Delta_d/N) < \ell$, ensures that $\Delta \subset \inter \, C$. Take $C_x := \cone_{\ff^d} (\Delta)$ and observe that $C_x$ is a polyhedral cone contained in $\{0\} \cup \inter \, C$. In addition, $x \in \inter \, \Delta$ implies that $\ff_{> 0}x \subset \inter \, C_x$. As $\dim \, \Delta = d$, we find that $\dim \, C_x = d$. Hence the set of $1$-dimensional faces of the polyhedral cone $C_x$ has size at least $d$. On the other hand, the $1$-dimensional faces of~$C_x$ are determined by some of the vertices of $\Delta$. As the vertex $q$ of $\Delta$ is contained in $\inter \, C_x$, the $1$-dimensional faces of $C_x$ are precisely the $d$ nonnegative rays containing the points $q + e_i/N$ for each $i \in \ldb 1,d \rdb$. Thus, $C_x$ is a $d$-dimensional rational simplicial cone.
\end{proof}

\noindent \textbf{Notation:} Let $\ff$ be an ordered field (whose prime field must be $\qq$), and take $d \in \nn$. Then we call a $1$-dimensional subspace of $\ff^d$ (resp., an infinite ray) a \emph{rational line} (resp., a \emph{rational ray}) if it contains a nonzero vector of $\qq^d$.
\smallskip

\begin{theorem} \label{thm:a characterization of cones generated by monoids in C}
	Let $\ff$ be an intermediate field of the extension $\qq \subseteq \rr$, and let $C$ be a $d$-dimensional positive cone in $\ff^d$ for some $d \in \nn$. Then $C$ can be generated by a submonoid of $(\nn^d,+)$ if and only if each $1$-dimensional face of $C$ is a rational ray.
\end{theorem}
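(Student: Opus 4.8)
The plan is to prove the two implications separately, leaning on the face decomposition of a cone together with the interior lemma (Lemma~\ref{lem:polyhedral cone containing an interior ray of a convex cone}) and Gordan's Lemma. For the forward direction, suppose $C = \cone(M)$ for a submonoid $M$ of $(\nn^d,+)$. I would first isolate the general fact that, for any set $X$ and any face $F$ of $\cone(X)$, one has $F = \cone(X \cap F)$. This follows from the defining property of a face: writing any $y \in F$ as a nonnegative combination $y = \sum_{i} c_i x_i$ with $x_i \in X$ and $c_i > 0$, one splits off a single summand, $y = c_1 x_1 + w$ with $w := \sum_{i \geq 2} c_i x_i \in C$, observes that the positive multiple $\tfrac{1}{c_1+1} y$ lies on the open segment between $x_1$ and $w$, and concludes $x_1, w \in F$; iterating forces every $x_i$ into $F$. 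Applying this to a $1$-dimensional face $F$ of $C = \cone(M)$ gives $F = \cone(M \cap F)$, and since $F \neq \{0\}$ the set $M \cap F$ contains a nonzero element of $\nn^d$, which spans $F$; hence $F$ is a rational ray.

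For the converse, assume every $1$-dimensional face of $C$ is a rational ray and take $M := \nn^d \cap C$, which is a submonoid of $(\nn^d,+)$ satisfying $\cone(M) \subseteq C$. It remains to prove $C \subseteq \cone(M)$. Since $C$ is the disjoint union of the relative interiors of its faces, it suffices to show $\relin F \subseteq \cone(M)$ for every face $F$. For the full-dimensional face $F = C$ this is immediate from the interior lemma: given $x \in \inter C$, choose the rational simplicial cone $C_x \subseteq \{0\} \cup \inter C$ with $x \in \inter C_x$; by Gordan's Lemma the finitely many lattice generators of $C_x$ lie in $\nn^d \cap \inter C \subseteq M$ and generate $C_x$, so $x \in \cone(C_x \cap \nn^d) \subseteq \cone(M)$.

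The crux is the proper faces, which I would handle by induction on $\dim C$. Each proper face $F$ is itself a pointed positive cone, and because every face of $F$ is a face of $C$, all $1$-dimensional faces of $F$ are rational rays; moreover $F$ lies in the coordinate subspace spanned by $\{e_i \mid i \in \supp F\}$. The idea is to apply the inductive hypothesis to $F$ inside $\aff F$, equipped with the lattice $\zz^d \cap \aff F$, to obtain $\relin F \subseteq \cone(\nn^d \cap F) \subseteq \cone(M)$. I expect the main obstacle to be precisely the legitimacy of this reduction: it requires $\lin F = \aff F$ to be a rational subspace, so that $\zz^d \cap \lin F$ is a full-rank lattice and the interior lemma is available inside $\lin F$. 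Equivalently, one must rule out proper faces that fail to be full-dimensional in their coordinate support, since the relative interior of such a face can be starved of lattice points even when its own $1$-dimensional faces are all rational. I would attempt to derive the needed rationality of $\lin F$ from the hypothesis on $1$-dimensional faces --- controlling the \emph{arithmetic} of the proper faces rather than merely the geometry of the interior --- and I expect this step to be the delicate heart of the argument, in contrast to the comparatively routine treatment of $\inter C$.
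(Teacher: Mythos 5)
Your forward direction is correct and is exactly the paper's argument: split off one summand, use the defining property of a face to force the generators into the face, and conclude that a $1$-dimensional face contains a nonzero element of $\nn^d$. Your reverse direction also begins the way the paper's does (take $M := \nn^d \cap C$, stratify $C$ by relative interiors of faces, and use Lemma~\ref{lem:polyhedral cone containing an interior ray of a convex cone} on $\inter \, C$), and you have put your finger on the right pressure point; but the step you defer --- deriving the rationality of $\lin F$ for proper faces $F$ from the hypothesis on $1$-dimensional faces --- is not merely delicate, it is impossible: the implication is false. Let $\ff = \rr$, $d = 3$, $D := \{(x,y,0) \mid x,y \ge 0\}$, and
\[
C := D \cup \big\{(x,y,z) \ \big| \ x,y > 0, \ 0 \le z \le \sqrt{2}\,x + \sqrt{3}\,y \big\},
\]
which is the (pointed, positive, $3$-dimensional) cone $F + D$, where $F := \{0\} \cup \{(x,y,\sqrt{2}x+\sqrt{3}y) \mid x,y > 0\}$. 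Since $C$ lies in the half-space $z \le \sqrt{2}x+\sqrt{3}y$ and meets the bounding plane exactly in $F$, the set $F$ is an exposed face of $C$; similarly $D = C \cap \{z=0\}$ is a face, with subfaces $\rr_{\ge 0}e_1$ and $\rr_{\ge 0}e_2$. These are all the proper nonzero faces: a proper face avoids $\inter \, C$, hence lies in $D \cup F$; a face of $C$ contained in $D$ (resp.\ in $F$) is a face of $D$ (resp.\ of $F$, which has none besides $\{0\}$ and $F$ because $F^\bullet = \relin \, F$); and no face meets both $D^\bullet$ and $F^\bullet$, since the open segment joining such points lies in $\inter \, C$. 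So the only $1$-dimensional faces of $C$ are the rational rays $\rr_{\ge 0}e_1$ and $\rr_{\ge 0}e_2$ --- the hypothesis of Theorem~\ref{thm:a characterization of cones generated by monoids in C} holds --- and yet $\lin F$ is the plane $z = \sqrt{2}x+\sqrt{3}y$, which meets $\qq^3$ only at the origin (because $1, \sqrt{2}, \sqrt{3}$ are linearly independent over $\qq$), so that $F \cap \nn^3 = \{0\}$.

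This does more than block your induction; it refutes the statement you are trying to prove. If $C = \cone(M)$ for a submonoid $M$ of $(\nn^3,+)$, then your own splitting argument applied to the face $F$ shows that any expression $x = \sum_i c_i v_i$ of a point $x \in F^\bullet$ with $v_i \in M$ and $c_i > 0$ forces every $v_i$ into $F \cap \nn^3 = \{0\}$, which is absurd; hence this $C$ is generated by no submonoid of $(\nn^3,+)$, and the reverse implication of Theorem~\ref{thm:a characterization of cones generated by monoids in C} fails as stated (for $d \le 2$ there are no intermediate-dimensional faces, so no problem arises there). You should also know that the paper's own proof commits precisely the oversight you flagged: after locating the minimal face $C'$ with $x \in \relin \, C'$, it invokes Lemma~\ref{lem:polyhedral cone containing an interior ray of a convex cone} inside $C'$ to produce a rational simplicial cone in $\{0\} \cup \relin \, C'$ with extreme generators in $\nn^d$, which tacitly requires $\qq^d \cap \lin C'$ to be dense in $\lin C'$ --- exactly the rationality of $\lin C'$ that neither you nor the paper establishes, and that the example above shows cannot be deduced. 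The natural repair is to strengthen the hypothesis: require that every nonzero face of $C$ contain a nonzero point of $\qq^d$ (equivalently, have rational linear span). Your forward argument shows that any cone generated by a submonoid of $(\nn^d,+)$ satisfies this stronger condition, and under it your face-by-face induction (and the paper's minimal-face argument) goes through verbatim, giving the correct characterization.
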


\begin{proof}
	Set $V := \ff^d$. For the direct implication, suppose that $C = \cone_V(M)$, where~$M$ is a maximal rank submonoid of $(\nn^d,+)$. Let $L$ be a $1$-dimensional face of $C$, and let~$x$ be a nonzero point in $L$. Now take $c_1, \dots, c_k \in \ff_{> 0}$ and $x_1, \dots, x_k \in M^\bullet$ such that $x = \sum_{i=1}^k c_i x_i$. If $k=1$, then $x_1 = \frac{1}{c_1} x \in L$ and, therefore,~$L$ is a rational ray. If $k > 1$, then after setting $x'_1 := \sum_{i=2}^k c_i x_i$ we can see that $x'_1 \in \cone_V(M)^\bullet$ and
	\[
		\frac{c_1}{1 + c_1} x_1 + \frac{1}{1 + c_1} x'_1 = \frac{1}{1 + c_1} x \in L.
	\]
	As $L$ is a face of $C$ and the open line segment from $x_1$ to $x'_1$ intersects $L$, the closed line segment from $x_1$ to $x'_1$ must be contained in $L$. In particular, $x_1 \in L$. Hence $L$ is a rational ray.
	\smallskip
	
	For the reverse implication, assume that all $1$-dimensional faces of $C$ are rational rays. Consider the set $M := C \cap \nn^d$. Clearly, $M$ is a submonoid of $(\nn^d,+)$ and $\cone_V(M) \subseteq C$. Take $x \in C^\bullet$, and set $\ell := \ff_{> 0} x$. As the nonempty intersection of faces of $C$ is again a face of $C$, there exists a minimal (under inclusion) face $C'$ of $C$ containing~$x$. Suppose, for the sake of a contradiction, that there exists a supporting hyperplane~$H$ of $C'$ such that $x \in H$ but $C' \not\subseteq H$. In this case, $C' \cap H$ would be a face of $C'$ containing~$x$. However, notice that $C' \cap H$ is a face of $C$ containing $x$ that is strictly contained in $C'$, contradicting the minimality of $C'$. Hence each supporting hyperplane of $C'$ containing $x$ also contains~$C'$. This implies that $x$ is in the relative interior of $C'$. Suppose now that~$C'$ has dimension $d'$. Then by Lemma~\ref{lem:polyhedral cone containing an interior ray of a convex cone} there exists a rational cone $C_x$ with $C^\bullet_x \subseteq \relin \, C'$ and~$d'$ $1$-dimensional faces such that $\ell \subset \relin \, C_x$. Now take $v_1, \dots, v_{d'} \in \nn^d \setminus \{0\}$ such that the $1$-dimensional faces of $C_x$ are precisely the rays $\ff_{\ge 0} v_1, \dots, \ff_{\ge 0} v_{d'}$. As $x \in \relin \, C_x$ we can write $x = \sum_{i=1}^{d'} c_i v_i$ for some $c_1,\dots, c_{d'} \in \ff_{> 0}$. Because $v_i \in C \cap \nn^d$ for each $i \in \ldb 1,d' \rdb$, it follows that $x \in \cone_V(M)$. Hence $C \subseteq \cone_V(M)$, which completes the proof.
\end{proof}

The following example illustrates a positive cone in $\rr^d$ that cannot be generated by any monoid in $\mathcal{C}$.

\begin{example}
	Let $C$ be the cone in $\rr^d$ generated by the set $\{e_1, \dots, e_{d-1}, v_d \}$, where $v_d := \pi e_d + \sum_{i=1}^{d-1} e_i$. It is clear that $C$ is a positive cone. Note, in addition, that $\rr_{\ge 0} v_d$ is a $1$-dimensional face of $C$. Finally, observe that $\rr_{> 0} v_d$ contains no point with rational coordinates. Hence it follows from Theorem~\ref{thm:a characterization of cones generated by monoids in C} that $C$ cannot be generated by any monoid in $\mathcal{C}$.
\end{example}

\medskip
\section{Faces and  Divisor-Closed Submonoids}
\label{sec:divisor-closed submonoids and finitely primary monoids}

\smallskip
\subsection{Face Submonoids}

Let $\ff$ be an ordered field, and let $M$ be a monoid in $\mathcal{C}$. We would like to understand the structure of the face lattice of $\cone_\ff(M)$ in connection with the divisibility aspects of $M$. In particular, the submonoids of $M$ obtained by intersecting $M$ with the faces of $\cone_\ff(M)$ are relevant in this direction as they inherit many divisibility and atomic properties from $M$.

\begin{definition}
	Let $\ff$ be an ordered field, and let $M$ be a nontrivial monoid in $\mathcal{C}$. A submonoid $N$ of $M$ is called a \emph{face submonoid} of $M$ with respect to $\ff$ provided that $N = M \cap F$ for some face $F$ of $\cone_\ff(M) \subseteq \ff \otimes_\zz \gp(M)$.
\end{definition}

Being a face submonoid of $M$ with respect to $\ff$ does not depend on the way $M$ is embedded into $\ff \otimes_\zz \gp(M)$, as the following proposition indicates.

\begin{prop} \label{prop:face submonoids do not depend on the embedding}
	Let $V$ be a finite-dimensional vector space over an ordered field $\ff$, and let $M$ be a monoid in $\mathcal{C}$ embedded into $V$. The following statements hold.
	\begin{enumerate}
		\item A submonoid $N$ of $M$ is a face submonoid with respect to $\ff$ if and only if there exists a face $F$ of $\cone_V(M)$ such that $N = M \cap F$.
		\smallskip
		
		\item Face submonoids with respect to $\ff$ are preserved by monoid isomorphisms. In particular, face submonoids do not depend on the embedding $M \hookrightarrow V$.
		\smallskip
		
		\item If $N$ is a face submonoid of $M$ with respect to $\ff$, and $F$ is a face of $\cone_V(M)$ satisfying that $N = M \cap F$, then $\cone_V(N) = F$. In particular, there exists a unique face of $\cone_V(M)$ determining the face submonoid $N$.
	\end{enumerate}
\end{prop}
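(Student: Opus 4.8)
The plan is to deduce parts (1) and (2) from the face--lattice bijection already produced inside the proof of Proposition~\ref{prop:combinatorial and geometric equivalence of cones}, and to establish part (3), together with the uniqueness assertion, directly from the defining property of a face. The only genuine work lies in part (3).

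For part (2), let $\varphi \colon M \to M'$ be a monoid isomorphism of members of $\mathcal{C}$ embedded into $V$ and $V'$. As in Proposition~\ref{prop:combinatorial and geometric equivalence of cones}, extend $\varphi$ to the Grothendieck groups and tensor with $\ff$ to obtain an \emph{injective} $\ff$-linear map $\bar{\varphi}$ with $\bar{\varphi}(\cone_V(M)) = \cone_{V'}(M')$ and $\bar{\varphi}|_M = \varphi$, for which $F \mapsto \bar{\varphi}(F)$ is an isomorphism of face lattices. If $N = M \cap F$ for a face $F$ of $\cone_V(M)$, then injectivity of $\bar{\varphi}$ yields
\[
	\varphi(N) = \bar{\varphi}(M \cap F) = \bar{\varphi}(M) \cap \bar{\varphi}(F) = M' \cap \bar{\varphi}(F),
\]
and $\bar{\varphi}(F)$ is a face of $\cone_{V'}(M')$; hence $\varphi(N)$ is a face submonoid of $M'$. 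Part~(1) is then the special case $M' = M$ and $\varphi = \mathrm{id}_M$, comparing the given embedding $M \hookrightarrow V$ with the canonical embedding $M \hookrightarrow \ff \otimes_\zz \gp(M)$: the same injective linear map identifies the two cones and their face lattices while fixing $M$ pointwise, so $N = M \cap F$ in one embedding exactly when $N = M \cap \bar{\varphi}(F)$ in the other, which is precisely the asserted independence of the embedding.

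For part (3), fix a face $F$ of $\cone_V(M)$ with $N = M \cap F$. Since $N \subseteq F$ and $F$ is a cone, we immediately get $\cone_V(N) \subseteq F$. For the reverse inclusion I would first record the standard strengthening of the face axiom: if $x = \sum_{i=1}^n c_i x_i \in F$ with each $x_i \in \cone_V(M)$ and each $c_i > 0$, then every $x_i$ lies in $F$. For $n \ge 2$, setting $t := c_1 / \sum_j c_j \in (0,1)$ and $y := \big( \sum_{j \ge 2} c_j x_j \big) / \sum_{j \ge 2} c_j \in \cone_V(M)$, one writes
\[
	\tfrac{1}{\sum_{j} c_j}\, x = t\, x_1 + (1-t)\, y,
\]
which is a point of $F$ (as $F$ is a cone) lying on the open segment between $x_1, y \in \cone_V(M)$; the face property forces $x_1, y \in F$, and the claim follows by induction on $n$ applied to $\sum_{j \ge 2} c_j x_j = \big(\sum_{j\ge 2} c_j\big) y \in F$. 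Now take any $x \in F \subseteq \cone_V(M)$ and write $x = \sum_i c_i x_i$ with $x_i \in M$ and $c_i > 0$; by the strengthened axiom each $x_i \in M \cap F = N$, so $x \in \cone_V(N)$. Hence $F \subseteq \cone_V(N)$ and $\cone_V(N) = F$. Uniqueness is then immediate: if $F_1, F_2$ are faces with $M \cap F_1 = N = M \cap F_2$, then $F_1 = \cone_V(N) = F_2$.

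The routine bookkeeping (injectivity of $\bar{\varphi}$ and compatibility of $\cone$ with linear maps) is supplied by Proposition~\ref{prop:combinatorial and geometric equivalence of cones}, so the main obstacle is the reverse inclusion $F \subseteq \cone_V(N)$ in part~(3). The one subtlety there is that a face is defined only through open segments of \emph{two} points, whereas a point of $F$ is generated from possibly many elements of $M$; the strengthened face axiom is exactly the reduction from the many-term combination to the two-point condition, and it is the cone structure of $F$ together with the positivity of the coefficients that makes the required rescaling legitimate.
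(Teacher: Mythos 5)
Your proof is correct and follows essentially the same route as the paper: both rest on extending monoid maps to injective $\ff$-linear maps via the Grothendieck group and flatness of $\ff$ (so cones and faces transfer), and your part~(3) is the identical rescaling-to-an-open-segment argument, differing only in the normalization constant. The one cosmetic difference is order: you prove the transfer statement underlying part~(2) first and obtain part~(1) as the special case $\varphi = \mathrm{id}_M$ comparing the given embedding with the canonical one, whereas the paper proves~(1) directly and then deduces~(2) from it.
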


\begin{proof}
	Set $V' := \ff \otimes_\zz \gp(M)$. By the universality property of the Grothendieck group, the inclusion $M \hookrightarrow V$ extends to an injective group homomorphism $\varphi \colon \gp(M) \to V$ and, after tensoring with the flat $\zz$-module $\ff$, one obtains an injective linear transformation $\bar{\varphi} \colon V' \to V$. For the direct implication of~(1), suppose that $N$ is a face submonoid of~$M$, and let $F'$ be a face of $\cone_{V'}(M)$ such that $N = M \cap F'$. Clearly, $F := \bar{\varphi}(F')$ is a face of $\cone_V(M)$. In addition,
	\[
		M \cap F = M \cap \bar{\varphi}(F') = \bar{\varphi}(M \cap F') = \bar{\varphi}(N) = N.
	\]
	For the reverse implication, we observe that if $F$ is a face of $\cone_V(M)$ such that $N = M \cap F$, then $N = \bar{\varphi}^{-1}(N) = M \cap \bar{\varphi}^{-1}(F)$. As $\bar{\varphi}^{-1}(F)$ is a face of $\cone_{V'}(M)$, one finds that $N$ is a face submonoid of~$M$.
	\smallskip
	
	In order to argue~(2), suppose that $M'$ is a monoid in $\mathcal{C}$, and let $\phi \colon M \to M'$ be a monoid isomorphism. As we did in the previous paragraph, we can extend the isomorphism $\phi$ to a bijective linear transformation $\bar{\phi} \colon W \to V'$, where $W = \aff_V(M)$ and $V' = \ff \otimes_\zz \gp(M)$. Now take $N$ to be a face submonoid of $M$ with respect to $\ff$. By the previous part, there exists a face $F$ of $\cone_W(M)$ such that $N = M \cap F$. Since $\bar{\phi}(N) = \bar{\phi}(M) \cap \bar{\phi}(F) = M' \cap \bar{\phi}(F)$ and $\bar{\phi}(F)$ is a face of $\cone_{V'}(M)$, it follows that $\bar{\phi}(N)$ is a face submonoid of $M'$.
	\smallskip
	
	Finally, we prove~(3). Suppose that $F$ is a face of $\cone_V(M)$ such that $N = M \cap F$. Since $N \subseteq F$ and $F$ is a cone, $\cone_V(N) \subseteq F$. To show the reverse inclusion, take $x \in F^\bullet$. Then $x = \sum_{i=1}^k f_i x_i$ for $x_1, \dots, x_k \in M^\bullet$ and $f_1, \dots, f_k \in \ff_{> 0}$. If $k = 1$, then $x_1 = x/f_1 \in F^\bullet$. Otherwise, after setting $x'_1 := \sum_{i=2}^k f_i x_i \in \cone_V(M)$ and $\alpha := \frac{1}{1 + f_1}$, we see that $(1 - \alpha) x_1 + \alpha x_1' = \alpha x \in F^\bullet$. Because $F$ is a face of $\cone_V(M)$ intersecting the open line segment from $x_1$ to $x_1'$, both $x_1$ and $x'_1$ must belong to $F^\bullet$. Now a simple inductive argument shows that $x_1, \dots, x_k \in F^\bullet$. Therefore $x_i \in M \cap F = N$ for each $i \in \ldb 1,k \rdb$. This implies that $x \in \cone_V(N)$.
\end{proof}
\medskip

The set of atoms of face submonoids can be nicely described in terms of the corresponding faces.

\begin{prop} \label{prop:atoms of face submonoids}
	Let $V$ be a finite-dimensional vector space over an ordered field~$\ff$, and let $M$ be a monoid in $\mathcal{C}$ embedded into $V$. If $N$ is a face submonoid of $M$ determined by a face $F$ of $\cone_V(M)$, then $\mathcal{A}(N) = \mathcal{A}(M) \cap F$.
\end{prop}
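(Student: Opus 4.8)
The plan is to prove the two inclusions $\mathcal{A}(N) \subseteq \mathcal{A}(M) \cap F$ and $\mathcal{A}(M) \cap F \subseteq \mathcal{A}(N)$ separately, exploiting the fact (established in Proposition~\ref{prop:face submonoids do not depend on the embedding}(3)) that faces of the cone interact with factorizations in $M$ in a divisor-closed way: if a sum of elements of $M^\bullet$ lands inside a face $F$, then every summand already lies in $F$.

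**The inclusion** $\mathcal{A}(M) \cap F \subseteq \mathcal{A}(N)$ is the easier one. First I would take an atom $a \in \mathcal{A}(M) \cap F$; then $a \in M \cap F = N$, and $a \in N^\bullet$ since $a \neq 0$. Suppose $a = x + y$ with $x, y \in N$. Since $N \subseteq M$, this is also a factorization in $M$, and because $a$ is an atom of $M$ we get $x = 0$ or $y = 0$. Hence $a \in \mathcal{A}(N)$. This direction uses only that $N$ is a submonoid of $M$ and requires no face structure.

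**The reverse inclusion** $\mathcal{A}(N) \subseteq \mathcal{A}(M) \cap F$ is where the geometry enters, and I expect it to be the main obstacle. Let $a \in \mathcal{A}(N)$. Since $N = M \cap F$, we immediately have $a \in F$, so it remains to show $a \in \mathcal{A}(M)$, i.e.\ that $a$ cannot split nontrivially inside the larger monoid $M$. Suppose for contradiction that $a = x + y$ with $x, y \in M^\bullet$. The key point is that $a$, being a nonzero element of the face $F$, forces its $M$-summands back into $F$: because $F$ is a face of $\cone_V(M)$ and $a = x + y$ with $x, y \in \cone_V(M)^\bullet$, the open line segment from $x$ to $y$ meets $F$ (the midpoint-type argument already used in the proof of Proposition~\ref{prop:face submonoids do not depend on the embedding}(3), where $\tfrac{1}{2}x + \tfrac{1}{2}y = \tfrac{1}{2}a \in F$). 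The defining property of a face then guarantees $x, y \in F$, whence $x, y \in M \cap F = N$. But then $a = x + y$ is a nontrivial factorization of $a$ inside $N$, contradicting $a \in \mathcal{A}(N)$. Therefore $a \in \mathcal{A}(M)$, and combined with $a \in F$ this gives $a \in \mathcal{A}(M) \cap F$.

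**The one subtlety** to handle carefully is the reduction of the face condition to the convex-segment language: the definition of a face in the excerpt is phrased in terms of the open segment $\{tx + (1-t)y \mid 0 < t < 1\}$ meeting $F$, so I would simply observe that the midpoint $\tfrac{1}{2}a$ lies on this segment and in $F$, making the face property directly applicable. No genuinely hard estimate is required; the entire content is the clean interaction between additive factorizations in $M$ and the convex-geometric notion of a face, which Proposition~\ref{prop:face submonoids do not depend on the embedding} has already packaged for us.
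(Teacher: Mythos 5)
Your proof is correct and follows essentially the same route as the paper: the easy inclusion $\mathcal{A}(M)\cap F\subseteq\mathcal{A}(N)$ is purely monoid-theoretic, and the reverse inclusion hinges on exactly the paper's key step, namely that $a/2\in F$ lies on the open segment between the two summands of any splitting $a=x+y$ in $M$, so the face property forces $x,y\in F$ and hence into $N$. The only cosmetic difference is that you argue by contradiction on an arbitrary nontrivial splitting, whereas the paper picks an atom $b\mid_M a$ (using atomicity of $M$) and concludes $a=b$; your variant is, if anything, marginally leaner since it avoids invoking atomicity of $M$.
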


\begin{proof}
	Since $\mathcal{A}(M) \cap F \subseteq N$, one obtains that $\mathcal{A}(M) \cap F \subseteq \mathcal{A}(N)$. To verify the reverse inclusion, take $a \in \mathcal{A}(N)$ and note that $a/2 \in F$. Now take $b \in \mathcal{A}(M)$ such that $b \mid_M a$, and then take $b' \in M$ such that $a = b+b'$. Because $F$ is a face of $\cone_V(M)$ and $a/2$ belongs to the intersection of $F$ and the open line segment $\{tb + (1-t)b' \mid t \in \ff \text{ and } 0 < t < 1 \}$, the atom $b$ must be contained in~$F$. As a consequence, $b \in \mathcal{A}(M) \cap F \subseteq N$. Therefore $b \mid_M a$ implies that $a = b \in \mathcal{A}(M) \cap F$, which yields the desired inclusion.
\end{proof}

For a monoid $M$ in $\mathcal{C}$, there may be submonoids of $M$ obtained by intersecting $M$ with certain non-supporting hyperplanes whose sets of atoms can still be obtained as in Proposition~\ref{prop:atoms of face submonoids}.

\begin{example}
	Consider the submonoid $M = \langle 2e_1, 2e_2, e_1 + e_2 \rangle$ of $(\nn^2,+)$. It can be readily checked that $\mathcal{A}(M) = \{2e_1, 2e_2, e_1 + e_2\}$. Now set $N = M \cap H$, where $H$ is the hyperplane $\rr(e_1 + e_2)$ of~$\rr^2$. It is clear that $N$ is a submonoid of $M$ satisfying that $\mathcal{A}(N) = \{e_1 + e_2\} = \mathcal{A}(M) \cap H$. However, $N$ is not a face submonoid of $M$.
\end{example}

\smallskip
\subsection{Characterization of Face Submonoids}

A submonoid $N$ of a monoid $M$ is said to be \emph{divisor-closed} provided that for all $y \in N$ and $x \in M$ the condition $x \mid_M y$ implies that $x \in N$. For any monoid $M$ in $\mathcal{C}$, the definitions of a face submonoid and a divisor-closed submonoid are equivalent.

\begin{theorem} \label{thm:characterization of divisor-closed submonoids in $C$}
	Let $\ff$ be an ordered field, and let $M$ be a monoid in $\mathcal{C}$. Then a submonoid $N$ of $M$ is divisor-closed in~$M$ if and only if $N$ is a face submonoid of $M$ with respect to $\ff$.
\end{theorem}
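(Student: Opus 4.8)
The plan is to establish the two implications of the equivalence separately, working directly with the geometric structure of $\cone_\ff(M)$ and the divisibility structure of $M$. Throughout, I would fix an embedding $M \hookrightarrow V$ into a finite-dimensional $\ff$-vector space and write $C := \cone_V(M)$; by Proposition~\ref{prop:face submonoids do not depend on the embedding} neither side of the equivalence depends on this choice, so there is no loss of generality.

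First I would argue that every face submonoid is divisor-closed. Suppose $N = M \cap F$ for a face $F$ of $C$, and take $y \in N$ and $x \in M$ with $x \mid_M y$, say $y = x + x'$ for some $x' \in M$. The key observation is that $y/2$ lies in $F$ (since $F$ is a cone and $y \in F$) and simultaneously lies on the open line segment from $x$ to $x'$, namely $y/2 = \tfrac{1}{2}x + \tfrac{1}{2}x'$. The defining property of a face then forces both $x$ and $x'$ into $F$, whence $x \in M \cap F = N$. This is essentially the same half-point argument already used in Proposition~\ref{prop:atoms of face submonoids}, and I would expect this direction to be short and routine.

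The harder direction is showing that a divisor-closed submonoid $N$ arises as a face submonoid. The natural candidate for the face is $F := \cone_V(N)$, so the plan is to prove both that $F$ is genuinely a face of $C$ and that $M \cap F = N$. For the latter equality, the inclusion $N \subseteq M \cap F$ is immediate; the reverse inclusion $M \cap F \subseteq N$ is where divisor-closedness must be exploited. Given $x \in M \cap \cone_V(N)$, I would write $x = \sum_i c_i a_i$ with $c_i > 0$ and $a_i \in N^\bullet$, and argue that $x$ actually \emph{divides} (or is divided by) elements of $N$ in $M$ so that divisor-closedness applies. The subtlety is that the conic combination has real/field coefficients, not integer ones, so one cannot directly read off a divisibility relation in $M$; I would need to clear denominators and use that some positive integer multiple $nx$ lies in $N$ (because $N$ is a monoid and $nx$ becomes an honest monoid-sum of the $a_i$ after scaling), then descend back to $x$ using that $N$ is divisor-closed and $x \mid_M nx$. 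Establishing that $nx \in N$ rigorously — controlling the coefficients so the combination lands in the monoid $N$ rather than merely its cone — is the main obstacle.

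Finally, to see that $F = \cone_V(N)$ is a face of $C$, I would take $x, y \in C$ whose open segment meets $F$ and show $x, y \in F$. The cleanest route is to reduce to the atomic level: by Lemma~\ref{lem:cones of monoids in C are positive} the cone $C$ is positive with respect to some basis, and points of $C$ are nonnegative combinations of atoms of $M$. Using the half-point/face-splitting argument iteratively (as in Proposition~\ref{prop:face submonoids do not depend on the embedding}(3)), I would show that any atom of $M$ appearing with positive coefficient in a representation of a point of $F$ must itself lie in $F$, hence in $\mathcal{A}(M) \cap F = \mathcal{A}(N)$ by Proposition~\ref{prop:atoms of face submonoids}; divisor-closedness of $N$ is exactly what guarantees that these atoms belong to $N$, closing the argument. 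Combining the two directions, together with part~(3) of Proposition~\ref{prop:face submonoids do not depend on the embedding} to identify the face uniquely, yields the claimed equivalence.
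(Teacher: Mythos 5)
Your first direction (face submonoid $\Rightarrow$ divisor-closed) is correct and coincides with the paper's half-point argument, so that part is fine. The converse, however, contains genuine gaps. For the inclusion $M \cap \cone_V(N) \subseteq N$ you yourself flag the key step --- producing an integer $n$ with $nx \in N$ from a conic representation $x = \sum_i c_i a_i$ with coefficients in $\ff$ --- as ``the main obstacle'' and then leave it unresolved, which means the proof is incomplete exactly where the difficulty lies. This gap is fillable: by Carath\'eodory's theorem for cones one may take the $a_i$ linearly independent, and since $x$ and the $a_i$ all lie in $\gp(M)$ (a rational lattice after embedding), Cramer's rule forces the $c_i$ to be rational; clearing denominators gives $dx \in N$ for some $d \in \nn^\bullet$, and then $x \mid_M dx$ together with divisor-closedness yields $x \in N$. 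But no such argument appears in your proposal.

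The more serious problem is your argument that $F := \cone_V(N)$ is a face of $\cone_V(M)$: as written, it is circular. Both Proposition~\ref{prop:atoms of face submonoids} (the identity $\mathcal{A}(N) = \mathcal{A}(M) \cap F$) and the face-splitting argument of part~(3) of Proposition~\ref{prop:face submonoids do not depend on the embedding} take as a \emph{hypothesis} that $F$ is a face of $\cone_V(M)$, i.e., that $N$ is a face submonoid --- precisely what you are trying to prove. Moreover, the crux is never addressed: given $x, y \in \cone_V(M)$ whose open segment meets $F$, one obtains an $\ff$-linear relation $\sum_j \lambda_j u_j + \sum_k \mu_k w_k = \sum_i \nu_i a_i$ with $u_j, w_k \in M$, $a_i \in N$, and possibly irrational coefficients, and one must convert this into an honest statement that some element of $N$ is divisible in $M$ by each $u_j$ and $w_k$ before divisor-closedness can be invoked. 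That conversion requires a rationalization argument (for instance: the nonnegative solutions of this rational linear system form a rational polyhedral cone, which therefore contains a rational solution of full support; clearing denominators gives $\sum_j m_j u_j + \sum_k n_k w_k = \sum_i p_i a_i \in N$ with positive integers $m_j, n_k, p_i$, whence each $u_j, w_k \in N \subseteq F$), and nothing of this sort is supplied. Note that the paper avoids the direct route entirely: it proves the contrapositive by induction on $\rank(M)$, showing that a submonoid which is \emph{not} a face submonoid fails to be divisor-closed, splitting into the full-rank case (where Cramer's rule produces an element of $M \setminus N$ dividing an element of $N$) and the lower-rank case (split further according to whether a hyperplane containing $N$ supports $\cone_V(M)$ or not). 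Your direct strategy can be made to work, but only once the missing rationality arguments are supplied and the circular citations are removed.
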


\begin{proof}
	Let $k$ be the rank of $M$. Since face submonoids of $M$ do not depend on the finite-dimensional vector space over $\ff$ that $M$ is embedded into, we can assume that $M \subseteq \nn^k \subset V := \ff^k$. We first verify that face submonoids of $M$ with respect to $\ff$ are divisor-closed. To do so, take a face $F$ of $\cone_V(M)$ and set $N := M \cap F$. To argue that $N$ is a divisor-closed submonoid of $M$, take $x \in N$ and $y \in M \setminus \{x\}$ with $y \mid_M x$. Then $x = y + y'$ for some $y' \in M$, which implies that $x/2$ is contained in both $F$ and the open line segment $\{ty +(1-t)y' \mid t \in \ff \text{ and } 0 < t < 1\}$ from $y$ to $y'$. Since $F$ is a face of $\cone_V(M)$, both $y$ and $y'$ belong to $F$ and, therefore, $y \in M \cap F = N$. Hence~$N$ is divisor-closed.
	\smallskip
	
	Let us argue the reverse implication by induction. Notice that when $M$ has rank~$1$, it is isomorphic to a numerical monoid, and the only submonoids of $M$ that are divisor-closed are the trivial and $M$ itself. These are precisely the face submonoids of $M$ in~$V$ corresponding to the origin and $\cone_V(M)$, respectively. Fix now $k \in \nn_{\ge 2}$, and assume that the divisor-closed submonoids of any monoid in $\mathcal{C}$ with rank less than $k$ are face submonoids with respect to $\ff$. Let $M$ be a maximal-rank submonoid of $(\nn^k,+)$, and let $N$ be a submonoid of $M$ that is not a face submonoid.
	
	\smallskip
	CASE 1: $\rk(N) = k$. As $N$ is not a face submonoid of $M$, it follows that $N \neq M$. Take $v \in M \setminus N$ and a basis $v_1, \dots, v_k \in N$ of $V$ with $v = \sum_{i=1}^k q_i v_i$ for scalars $q_1, \dots, q_k$, which are rational numbers by Cramer's rule. We assume that $q_1, \dots, q_j \le 0$ and $q_{j+1}, \dots, q_k > 0$ (not all zeros) for some index $j \in \nn^\bullet$. Then
	\[
		dv + \sum_{i=1}^j (dq_i) v_i = \sum_{i = j+1}^k (dq_i) v_i \in N,
	\]
	where $d$ is the least common multiple of the denominators of all the nonzero $q_i$'s. Since $v \notin N$, the monoid $N$ cannot be divisor-closed.
	
	\smallskip
	CASE 2: $\rk(N) < k$. Take $u \in \qq^k$ such that the hyperplane
	\[
		H := \{h \in V \mid \langle h,u \rangle = 0\}
	\]
	of $V$ contains linearly independent vectors $v_1, \dots, v_{k-1} \in M$ such that $v_1, \dots, v_r \in N$, where $r = \rk(N)$. It is clear that $N \subseteq H$. Now we consider the following two subcases.
	
	\smallskip
	CASE 2.1: $H$ is a supporting hyperplane of the cone $\cone_V(M)$. Consider the face $F := H \cap \cone_V(M)$ of $\cone_V(M)$. It follows from part~(3) of Proposition~\ref{prop:face submonoids do not depend on the embedding} that $\cone_V(M \cap F) = F$. Then each face of $\cone_V(M \cap F)$ is a face of $F$ and, therefore, a face of $\cone_V(M)$. Thus, the fact that $N$ is not a face submonoid of $M$ implies that $N$ is not a face submonoid of $M \cap F$. Because $\rk(M \cap F) < \rk(M)$, our inductive assumption guarantees that~$N$ is not a divisor-closed submonoid of $M \cap F$. Hence $N$ cannot be a divisor-closed submonoid of $M$.
	
	\smallskip
	CASE 2.2: $H$ is not a supporting hyperplane of the cone $\cone_V(M)$. In this case, there exist $w_{r+1}, w'_{r+1} \in M$ such that $\langle w_{r+1}, u \rangle > 0$ and $\langle w'_{r+1}, u \rangle < 0$. Because $\{v_1, \dots, v_{k-1}, w'_{r+1}\}$ is a basis of $V$ and $N$ is a divisor-closed submonoid of $M$, there exists $w_{r+2} \in M$ contained in $\cone_V(v_1, \dots, v_{k-1}, w'_{r+1}) \setminus H$ and satisfying that $S := \{v_1, \dots, v_r\} \cup \{w_{r+1}, w_{r+2}\}$ is linearly dependent. Clearly, $\langle w_{r+2}, u \rangle  < 0$. After relabeling the vectors $v_1, \dots, v_r$ (if necessary) and using Cramer's rule, we find that
	\begin{equation} \label{eq:linearly dependence}
		\sum_{i=1}^j q_i v_i = \bigg( \sum_{i=j+1}^r q_i v_i \bigg) + q_{r+1} w_{r+1} + q_{r+2} w_{r+2} 
	\end{equation}
	for some index $j \in \ldb 1,r \rdb$ and coefficients $q_1, \dots, q_{r+1} \in \qq_{\ge 0}$ and $q_{r+2} \in \qq$ (not all zeros). Observe that $q_{r+1}$ and $q_{r+2}$ are both different from zero. After taking the scalar product with $u$ in both sides of~(\ref{eq:linearly dependence}), one can see that $q_{r+2} \langle w_{r+2}, u \rangle = -q_{r+1} \langle w_{r+1}, u \rangle$. Hence $q_{r+1}$ and $q_{r+2}$ are both positive. Now we can multiply~(\ref{eq:linearly dependence}) by the common denominator $d$ of all the nonzero $q_i$'s, to obtain that $w_{r+1} \mid_M \sum_{i=1}^j (dq_i) v_i$. Since $w_{r+1} \notin N$, the submonoid~$N$ is not divisor-closed.
\end{proof}

We have seen in Proposition~\ref{prop:combinatorial and geometric equivalence of cones} that for an ordered field $\ff$ if a monoid $M$ in $\mathcal{C}$ is embedded into two $\ff$-vector spaces $V$ and $V'$, then $\cone_V(M)$ is combinatorially equivalent to $\cone_{V'}(M)$. Our next goal is to argue that the combinatorial structure of $\cone_V(M)$ does not depend on the chosen ordered field. First, let us argue the following lemma.

\begin{lemma} \label{lem:correspondence between faces and divisor-closed submonoids}
	Let $V$ be a finite-dimensional vector space over an ordered field $\ff$, and let $M$ be a monoid in $\mathcal{C}$ maximally embedded into $V$. If $F$ and $F'$ are two faces of $\cone_V(M)$, then $F \subseteq F'$ if and only if $F \cap M \subseteq F' \cap M$; in particular, $F = F'$ if and only if $F \cap M = F' \cap M$.
\end{lemma}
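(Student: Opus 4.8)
The plan is to reduce the lemma to part~(3) of Proposition~\ref{prop:face submonoids do not depend on the embedding}, which records that every face of $\cone_V(M)$ is recovered as the conic hull of the face submonoid it cuts out. The forward implication is immediate: intersecting both sides of $F \subseteq F'$ with $M$ preserves the inclusion, giving $F \cap M \subseteq F' \cap M$. Thus all the content lies in the reverse implication.

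For the reverse implication, I would set $N := F \cap M$ and $N' := F' \cap M$. Since $F$ and $F'$ are faces of $\cone_V(M)$, both $N$ and $N'$ are face submonoids of $M$, so part~(3) of Proposition~\ref{prop:face submonoids do not depend on the embedding} applies and yields $\cone_V(N) = F$ and $\cone_V(N') = F'$. Assuming $N \subseteq N'$, the monotonicity of the conic-hull operation (the smallest cone containing $N$ is contained in the smallest cone containing the larger set $N'$) gives $F = \cone_V(N) \subseteq \cone_V(N') = F'$, which is the desired conclusion.

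The ``in particular'' clause then follows formally by antisymmetry: $F = F'$ holds exactly when $F \subseteq F'$ and $F' \subseteq F$, which by the equivalence just established holds exactly when $F \cap M \subseteq F' \cap M$ and $F' \cap M \subseteq F \cap M$, that is, when $F \cap M = F' \cap M$.

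I expect no genuine obstacle here: once the retrieval identity $\cone_V(F \cap M) = F$ is in hand, only the trivial monotonicity of $\cone_V$ remains. The single point requiring a word of justification is that $F \cap M$ qualifies as a face submonoid so that part~(3) may be invoked; this holds by the very definition of a face submonoid, since $F$ is a face of $\cone_V(M)$.
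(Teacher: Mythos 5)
Your proposal is correct. It is worth noting, though, that the paper does not route the argument through Proposition~\ref{prop:face submonoids do not depend on the embedding}: its proof of the lemma is direct and self-contained. There, one takes $v \in F$, writes $v = \sum_{i=1}^n f_i x_i$ with $f_i \in \ff_{>0}$ and $x_i \in M^\bullet$, uses the face property of $F$ (each open segment from $x_j$ to $\sum_{i \neq j} f_i x_i$ meets $F$ at $\frac{1}{1+f_j}v$) to conclude $x_j \in F \cap M \subseteq F' \cap M$, and then uses that $F'$ is a cone to get $v \in F'$. This is, of course, exactly the argument that proves the inclusion $F \subseteq \cone_V(M \cap F)$ in part~(3) of Proposition~\ref{prop:face submonoids do not depend on the embedding}, so the paper's proof of the lemma in effect re-derives inline the retrieval identity $\cone_V(F \cap M) = F$ that you cite. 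Your reduction is therefore the same mathematics packaged more modularly: it buys brevity and makes the logical dependence on the earlier proposition explicit, at the small cost of needing part~(1) of that proposition (rather than ``the very definition,'' as you put it) to certify that $F \cap M$ is a face submonoid with respect to $\ff$ when $F$ is a face of $\cone_V(M)$ for an arbitrary embedding into $V$ --- the definition itself refers to faces of $\cone_\ff(M) \subseteq \ff \otimes_\zz \gp(M)$. Neither proof uses the ``maximally embedded'' hypothesis in an essential way.
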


\begin{proof}
	It suffices to argue the first statement, as the second statement is an immediate consequence of the first one. The direct implication is straightforward. For the reverse implication, assume that $F \cap M \subseteq F' \cap M$. Take $v \in F$. Since $F \subseteq \cone_V(M)$, there exist $f_1, \dots, f_n \in \ff_{> 0}$ and $x_1, \dots, x_n \in M^\bullet$ such that $v = \sum_{i=1}^n f_i x_i$. If $n=1$, then $x_1 = v/f_1 \in F \cap M \subseteq F' \cap M \subseteq F'$ and so $x \in F'$. Otherwise, for every $j \in \ldb 1,n \rdb$ the open segment from $x_j$ to $\sum_{i \neq j} f_i x_i$ intersects $F$ at $\frac{1}{1 + f_j} v$ and, therefore, $x_j \in F$. As $x_1, \dots, x_n \in F \cap M \subseteq F' \cap M \subseteq F'$, it follows that $v \in F'$. Hence $F \subseteq F'$, which completes our argument.
\end{proof}

\begin{prop} \label{prop:the combinatorics of cone(M) does not depend on the field}
	Let $V$ and $V'$ be finite-dimensional vector spaces over ordered fields~$\ff$ and $\mathbb{K}$, respectively. If $M$ and $M'$ are isomorphic monoids in $\mathcal{C}$ embedded into~$V$ and~$V'$, respectively, then $\cone_V(M)$ and $\cone_{V'}(M')$ are combinatorially equivalent.
\end{prop}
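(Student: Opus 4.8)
The plan is to exhibit, for each ordered field, a lattice isomorphism between the face lattice of the relevant cone and the lattice of divisor-closed submonoids of the underlying monoid; since the latter lattice is an intrinsic invariant of the monoid and makes no reference to the field, combinatorial equivalence will follow at once.

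First I would reduce to a common concrete representative. By Proposition~\ref{prop:combinatorial and geometric equivalence of cones}, combinatorial equivalence of the cone of a monoid in $\mathcal{C}$ over a fixed field depends only on the isomorphism class of the monoid. So, setting $k := \rk(M) = \rk(M')$ and letting $M_0$ be a maximal-rank submonoid of $(\nn^k,+)$ isomorphic to $M$ (which exists by Proposition~\ref{prop:conditions defining monoids in C}), I obtain that $\cone_V(M)$ is combinatorially equivalent to $\cone_{\ff^k}(M_0)$ and that $\cone_{V'}(M')$ is combinatorially equivalent to $\cone_{\mathbb{K}^k}(M_0)$. It therefore suffices to prove that $\cone_{\ff^k}(M_0)$ and $\cone_{\mathbb{K}^k}(M_0)$ are combinatorially equivalent, that is, that the face lattice of the cone of the \emph{same} submonoid $M_0 \subseteq \nn^k$ does not change when the ground field is switched from $\ff$ to $\mathbb{K}$.

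Next I would identify each face lattice intrinsically. Since $\dim \cone_{\ff^k}(M_0) = \rk(M_0) = k = \dim \ff^k$ by Corollary~\ref{cor:rank equal dimension}, the monoid $M_0$ is maximally embedded into $\ff^k$, so Lemma~\ref{lem:correspondence between faces and divisor-closed submonoids} applies: the assignment $F \mapsto F \cap M_0$ is an order-preserving bijection, with order-preserving inverse, from $\mathsf{F}(\cone_{\ff^k}(M_0))$ onto the poset of face submonoids of $M_0$ with respect to $\ff$, ordered by inclusion. Because $\mathsf{F}(\cone_{\ff^k}(M_0))$ is a complete lattice, an order isomorphism out of it automatically preserves joins and meets, so this is a lattice isomorphism. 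Now Theorem~\ref{thm:characterization of divisor-closed submonoids in $C$} identifies the face submonoids of $M_0$ with respect to $\ff$ with exactly the divisor-closed submonoids of $M_0$ --- a collection defined purely in terms of the monoid operation and the divisibility relation $\mid_{M_0}$, with no mention of $\ff$. Running the identical argument with $\mathbb{K}$ in place of $\ff$ produces a lattice isomorphism from $\mathsf{F}(\cone_{\mathbb{K}^k}(M_0))$ onto the very same poset of divisor-closed submonoids of $M_0$.

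Composing the first lattice isomorphism with the inverse of the second yields a lattice isomorphism $\mathsf{F}(\cone_{\ff^k}(M_0)) \to \mathsf{F}(\cone_{\mathbb{K}^k}(M_0))$, which is precisely what combinatorial equivalence requires. The only point that demands care is the field-independence step: everything rests on Theorem~\ref{thm:characterization of divisor-closed submonoids in $C$} guaranteeing that the family of face submonoids is the same for \emph{every} ordered field, namely the family of divisor-closed submonoids; once that is granted, the remaining verifications (that $F \mapsto F \cap M_0$ assembles into a lattice isomorphism, and that the rank-zero case is vacuous) are routine.
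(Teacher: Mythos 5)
Your proof is correct, and it rests on the same two key results as the paper's proof --- Lemma~\ref{lem:correspondence between faces and divisor-closed submonoids} and Theorem~\ref{thm:characterization of divisor-closed submonoids in $C$} --- together with the same initial reduction to a maximal-rank submonoid $M_0$ of $(\nn^k,+)$, but it is organized around a different intermediate object. The paper pivots through the rational cone $C_\qq := \cone_{\qq^k}(M_0)$: it defines $\Phi \colon \mathsf{F}(C_\ff) \to \mathsf{F}(C_\qq)$ by $F \mapsto F \cap \qq^k$, obtains injectivity and order-compatibility from Lemma~\ref{lem:correspondence between faces and divisor-closed submonoids}, obtains surjectivity from Theorem~\ref{thm:characterization of divisor-closed submonoids in $C$} (a face of $C_\qq$ cuts out a divisor-closed submonoid of $M_0$, which must therefore also be cut out by a face of $C_\ff$), and then repeats this with $\mathbb{K}$ in place of $\ff$, so that $\qq$ serves as the common reference field. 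You instead pivot through the poset of divisor-closed submonoids of $M_0$ itself, identifying each of $\mathsf{F}(\cone_{\ff^k}(M_0))$ and $\mathsf{F}(\cone_{\mathbb{K}^k}(M_0))$ with this manifestly field-free lattice and composing. The two arguments are logically equivalent --- the paper's surjectivity step is exactly your field-independence step in disguise --- but your packaging is somewhat cleaner: it dispenses with the auxiliary rational cone, avoids the paper's unproved (though easy) assertion that $F \cap \qq^k$ is a face of $C_\qq$ whenever $F$ is a face of $C_\ff$, and makes transparent the conceptual point that the face lattice is an invariant of the abstract monoid. Your verification of the hypotheses is also in order: Corollary~\ref{cor:rank equal dimension} is precisely what guarantees that $M_0$ is maximally embedded into $\ff^k$ (and into $\mathbb{K}^k$), as Lemma~\ref{lem:correspondence between faces and divisor-closed submonoids} requires, and an order isomorphism between complete lattices automatically preserves joins and meets, which is what the paper's definition of combinatorial equivalence demands.
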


\begin{proof}
	Set $d := \rank(M)$. By Proposition~\ref{prop:conditions defining monoids in C} there exists a submonoid $M''$ of $(\nn^d,+)$ that is isomorphic to $M$, i.e., $M'' \subseteq \nn^d \subseteq \ff^d$. Now set $C_\qq := \cone_{\qq^d}(M'')$ and $C_\ff := \cone_{\ff^d}(M'')$.
	
	We first verify that $C_\qq$ and $C_\ff$ are combinatorially equivalent. Clearly, $F \cap \qq^d$ is a face of $C_\qq$ when $F$ is a face of $C_\ff$. Then define $\Phi \colon \mathsf{F}(C_\ff) \to \mathsf{F}(C_\qq)$ via $F \mapsto F \cap \qq^d$, where $\mathsf{F}(C_\ff)$ and $\mathsf{F}(C_\qq)$ are the face lattices of the cones $C_\ff$ and $C_\qq$, respectively. If~$F_1$ and $F_2$ are two faces of $C_\ff$ such that $\Phi(F_1) = \Phi(F_2)$, then
	\[
		F_2 \cap M'' = \Phi(F_2) \cap M'' = \Phi(F_1) \cap M'' = F_1 \cap M''.
	\]
	So it follows from Lemma~\ref{lem:correspondence between faces and divisor-closed submonoids} that $F_1 = F_2$. Thus, $\Phi$ is injective. Similarly one can verify that for any two faces $F_1$ and $F_2$ of $C_\ff$, the inclusion $\Phi(F_1) \subseteq \Phi(F_2)$ holds if and only if $F_1 \subseteq F_2$. As a result, $\Phi$ is a homomorphism of posets. To argue that~$\Phi$ is surjective, let $F'$ be a face of $C_\qq$. As $F' \cap M''$ is a divisor-closed submonoid of~$M''$, there exists a face $F$ of $C_\ff$ such that $F \cap M'' = F' \cap M''$. So we obtain that $\Phi(F) \cap M'' = F \cap M'' = F' \cap M''$, and if follows from Lemma~\ref{lem:correspondence between faces and divisor-closed submonoids} that $\Phi(F) = F'$. Because $\Phi$ is a bijective homomorphism of posets, it is indeed a lattice isomorphism.
	
	Now it follows from part~(2) of Proposition~\ref{prop:combinatorial and geometric equivalence of cones} that $\cone_{V}(M)$ is combinatorially equivalent to $C_\ff$ and, therefore, $\cone_V(M)$ is combinatorially equivalent to $C_\qq$. In a similar manner, one can argue that $\cone_{V'}(M')$ is combinatorially equivalent to $C_\qq$, whence the proposition follows.
\end{proof}

\begin{remark}
	By virtue of Proposition~\ref{prop:the combinatorics of cone(M) does not depend on the field}, in order to study the combinatorial aspects of the cones of a monoid $M$ in $\mathcal{C}$ one can simply embed $M$ into any finite-dimensional vector space over \emph{any} ordered field. In particular, there is no loss in choosing the ordered field to be $\rr$, and from now on we will do so.
\end{remark}

\medskip
\section{Geometry and Factoriality}
\label{sec:factoriality}

In this section we study the factoriality of members of $\mathcal{C}$ in connection with the geometric properties of their corresponding conic hulls. We shall provide geometric characterizations of the UFMs, HFMs, and OHFMs in $\mathcal{C}$.

\medskip
\subsection{Unique Factorization Monoids}

To begin with, let us characterize the UFMs in $\mathcal{C}$. Recall that a monoid $M$ is a UFM if and only if $|\mathsf{Z}(x)| = 1$ for all $x \in M$.

\begin{theorem} \label{thm:factoriality characterizations}
	Let $M$ be a monoid in $\mathcal{C}$, and set $V = \rr \otimes_{\zz} \emph{\gp}(M)$. The following statements are equivalent.
	\begin{enumerate}
		\item[(a)] The monoid $M$ is a UFM.
		\smallskip
		
		\item[(b)] Each face submonoid of $M$ is a UFM.
		\smallskip
		
		\item[(c)] The equality $|\mathcal{A}(M)| = \dim \, \cone_V(M)$ holds.
	\end{enumerate}
\end{theorem}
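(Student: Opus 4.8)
The plan is to route all three conditions through the elementary characterization of unique factorization for reduced atomic monoids: \emph{a reduced atomic monoid is a UFM if and only if it is free on its set of atoms}. This holds because, by the definition of a UFM recalled earlier, the atoms of a UFM coincide with its primes (every prime is an atom, and an atom that factors as a sum of primes must be a single prime), and every element is written uniquely as a nonnegative-integer combination of them. In particular, if $M$ is a UFM in $\mathcal{C}$ then $M$ is free on $\mathcal{A}(M)$, so $|\mathcal{A}(M)| = \rank(M)$, and Corollary~\ref{cor:rank equal dimension} turns this into the equality $|\mathcal{A}(M)| = \dim \cone_V(M)$. This establishes the implication (a) $\Rightarrow$ (c) directly.

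For (c) $\Rightarrow$ (a), I would set $d := \dim \cone_V(M)$ and use that $M$ is atomic, so that $\cone_V(M) = \cone_V(\mathcal{A}(M))$; hence the $d$ atoms span a $d$-dimensional subspace of $V$ and must therefore be linearly independent. Linear independence forces the representation of each $x \in M$ as a nonnegative-integer combination of atoms to be unique, so $|\mathsf{Z}(x)| = 1$ for every $x \in M$, and $M$ is a UFM.

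The equivalence with (b) is then cheap. For (a) $\Rightarrow$ (b), recall that by Theorem~\ref{thm:characterization of divisor-closed submonoids in $C$} every face submonoid $N$ of $M$ is divisor-closed; by Proposition~\ref{prop:atoms of face submonoids} its set of atoms is contained in $\mathcal{A}(M)$, and because $N$ is divisor-closed the (unique) factorization in $M$ of any $x \in N$ uses only atoms dividing $x$ in $M$, which therefore lie in $N$. Uniqueness of factorization in $M$ then yields uniqueness in $N$, so $N$ is a UFM. For (b) $\Rightarrow$ (a), it suffices to observe that $\cone_V(M)$ is a face of itself, so $M = M \cap \cone_V(M)$ is itself a face submonoid of $M$; applying (b) to this particular face gives that $M$ is a UFM.

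The genuinely substantive steps are the linear-algebra dichotomy in (c) $\Rightarrow$ (a) and the ``UFM if and only if free on atoms'' translation, both of which become routine once the bridge $\dim \cone_V(M) = \rank(M)$ from Corollary~\ref{cor:rank equal dimension} is in place; accordingly I do not anticipate a serious obstacle. The only point requiring mild care is verifying in (a) $\Rightarrow$ (b) that divisor-closedness propagates unique factorization downward, which is handled precisely by the interplay between Theorem~\ref{thm:characterization of divisor-closed submonoids in $C$} and Proposition~\ref{prop:atoms of face submonoids}.
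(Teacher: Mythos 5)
Your proposal is correct, but it is organized differently from the paper's proof and substitutes different key arguments in two places. The paper proves the cycle (b) $\Rightarrow$ (a) $\Rightarrow$ (c) $\Rightarrow$ (b): for (a) $\Rightarrow$ (c) it first gets $|\mathcal{A}(M)| \ge \dim \cone_V(M)$ from the fact that the atoms span, and then rules out $|\mathcal{A}(M)| > \dim \cone_V(M)$ by contradiction, using Cramer's rule to extract a rational linear dependence among $d+1$ atoms, splitting it by sign and clearing denominators to manufacture two distinct factorizations of one element; for (c) $\Rightarrow$ (b) it notes that $\cone_V(M)$ is polyhedral by Farkas--Minkowski--Weyl, writes a face submonoid as $M \cap H$ for a supporting hyperplane $H$, and shows by an inner-product computation that $\mathcal{A}(N) = \mathcal{A}(M) \cap N$, whence $N$ is a UFM. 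You instead prove (a) $\Leftrightarrow$ (c) through the characterization ``reduced UFM $=$ free on its atoms,'' combined with the bridge $\dim \cone_V(M) = \rank(M)$ of Corollary~\ref{cor:rank equal dimension} in one direction and linear independence of the $d$ spanning atoms in the other, and you prove (a) $\Rightarrow$ (b) by citing the (easy direction of) Theorem~\ref{thm:characterization of divisor-closed submonoids in $C$} together with Proposition~\ref{prop:atoms of face submonoids}. Both routes rest on the same two pillars (the rank--dimension equality and linear independence of atoms), but yours is more modular and arguably cleaner: the freeness argument replaces the sign-splitting contradiction, and divisor-closedness transfers unique factorization to face submonoids without any hyperplane computation. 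What the paper's version buys in exchange is self-containedness: its proof of this theorem does not lean on the comparatively deep Theorem~\ref{thm:characterization of divisor-closed submonoids in $C$}, and its (c) $\Rightarrow$ (b) argument exhibits explicitly, via the supporting hyperplane, why exposed faces inherit unique factorization. One small point worth making explicit in your (a) $\Rightarrow$ (b): a face submonoid $N$ is atomic because the unique $M$-factorization of any $x \in N$ consists of atoms dividing $x$, which lie in $N$ by divisor-closedness and are atoms of $N$ by Proposition~\ref{prop:atoms of face submonoids}; with that observed, the injection $\mathsf{Z}_N(x) \hookrightarrow \mathsf{Z}_M(x)$ finishes the argument exactly as you say.
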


\begin{proof}
	Set $d := \dim \cone_V(M)$. By part~(2) of Proposition~\ref{prop:face submonoids do not depend on the embedding}, there is not loss of generality in assuming that $M$ is a submonoid of $(\nn^d,+)$, in which case $V = \rr^d$. It follows from Corollary~\ref{cor:rank equal dimension} that $d = \dim \cone_V(M) = \rank(M)$.
	\smallskip
	
	(b) $\Rightarrow$ (a): It is obvious.
	\smallskip
	
	(a) $\Rightarrow$ (c): We first verify that $|\mathcal{A}(M)| \ge d$. Since $\cone_V(M) = \cone_V(\mathcal{A}(M))$, the atoms in $\mathcal{A}(M)$, when considered as vectors form a generating set of $V$. As a consequence, $|\mathcal{A}(M)| \ge \dim V = d$. To argue the reverse inequality suppose, by way of contradiction, that $|\mathcal{A}(M)| > d$. As the vectors in $\mathcal{A}(M)$ generate $V$, one can take distinct atoms $a_1,\dots, a_{d+1} \in \mathcal{A}(M)$ such that $a_1, \dots, a_d$ are linearly independent. Then there exist coefficients $\beta_1, \dots, \beta_{d+1} \in \rr$ not all zeros such that $\sum_{i=1}^{d+1} \beta_i a_i = 0$. Since $a_1, \dots, a_d$ are linearly independent and $a_1, \dots, a_{d+1} \in \qq^d$, Cramer's rule guarantees that $\beta_1, \dots, \beta_{d+1} \in \qq$. There is no loss in assuming that there is an index $k \in \ldb 1,d \rdb$ such that $\beta_i < 0$ for each $i \in \ldb 1,k \rdb$ and $\beta_i \ge 0$ for each $i \in \ldb k+1,d+1 \rdb$. Hence $\sum_{i=1}^k \beta_i a_i$ and $\sum_{i = k+1}^{d+1} (-\beta_i) a_i$ are two distinct factorizations of the same element of $M$, contradicting that $M$ is a UFM.
	\medskip
	
	(c) $\Rightarrow$ (b): For this implication, suppose that $|\mathcal{A}(M)| = d$. Let $N$ be a face submonoid of $M$. Since $\cone_V(M) = \cone_V(\mathcal{A}(M))$ and $\mathcal{A}(M)$ is finite, Farkas-Minkowski-Weyl Theorem ensures that $\cone_V(M)$ is polyhedral. Then $N = M \cap H$ for some supporting hyperplane $H = \{x \in \rr^d \mid \langle x,u \rangle = 0 \}$ of $\cone_V(M)$ determined by $u \in \rr^d$. Suppose that $\cone_V(M) \subseteq H^-$. Now if $a \in \mathcal{A}(N)$ and $a = x_1 + x_2$ for some $x_1, x_2 \in M$, then $\langle x_1, u \rangle + \langle x_2, u \rangle = 0$ and, therefore, $\langle x_1, u \rangle = \langle x_2, u \rangle = 0$. So $x_1, x_2 \in M \cap H = N$. Because $a \in \mathcal{A}(N)$ either $x_1 = 0$ or $x_2 = 0$, whence $a \in \mathcal{A}(M)$. This implies that $\mathcal{A}(N) = \mathcal{A}(M) \cap N$. Thus, $N$ is a UFM.
\end{proof}

\begin{cor}
	Let $M$ be a monoid in $\mathcal{C}$, and set $V = \rr \otimes_\zz \emph{\gp}(M)$. If $M$ is a UFM, then $\cone_V(M)$ is rational and polyhedral.
\end{cor}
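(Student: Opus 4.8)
The plan is to read off both conclusions directly from Theorem~\ref{thm:factoriality characterizations}. Set $d := \dim \cone_V(M)$, which equals $\rank(M)$ by Corollary~\ref{cor:rank equal dimension}. Since $M$ is a UFM, that theorem gives $|\mathcal{A}(M)| = d$; in particular, $M$ has only finitely many atoms. Moreover, as $\cone_V(M) = \cone_V(\mathcal{A}(M))$, the atoms span the $d$-dimensional space $V$, and being exactly $d$ vectors spanning a $d$-dimensional space they must be linearly independent, hence form a basis of $V$.

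For polyhedrality, I would simply observe that $\cone_V(M) = \cone_V(\mathcal{A}(M))$ is the conic hull of the finite set $\mathcal{A}(M)$, so the Farkas-Minkowski-Weyl Theorem yields that $\cone_V(M)$ is polyhedral. In fact, since the $d$ atoms are linearly independent, $\cone_V(M)$ is even simplicial, which by the remarks following Farkas-Minkowski-Weyl immediately gives polyhedrality.

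For rationality, the one point requiring care is to pin down the lattice with respect to which integrality is measured. Since $M$ lies in $\mathcal{C}$, its Grothendieck group $\gp(M)$ embeds into a finite-rank free abelian group and is therefore itself free abelian, of rank $d$. Fixing a $\zz$-basis of $\gp(M)$ identifies $V = \rr \otimes_\zz \gp(M)$ with $\rr^d$ in such a way that $\gp(M)$ corresponds to $\zz^d$. Under this identification the atoms, which lie in $M \subseteq \gp(M)$, have integer coordinates, so $\cone_V(M) = \cone_V(\mathcal{A}(M))$ is the conic hull of integer vectors and is thus rational.

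I expect no serious obstacle here: the corollary is a direct unwinding of the characterization just proved, and the only thing to watch is invoking the freeness of $\gp(M)$ (guaranteed by $M \in \mathcal{C}$) so that the notion of rationality in $V$ is made precise via a genuine lattice identification, rather than trying to transport rationality across the affine equivalences of Proposition~\ref{prop:combinatorial and geometric equivalence of cones}, under which integrality need not be preserved.
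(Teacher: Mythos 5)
Your proposal is correct and takes essentially the same route as the paper: both use Theorem~\ref{thm:factoriality characterizations} to see that $\cone_V(M)$ is the conic hull of the finite set $\mathcal{A}(M)$, invoke the Farkas-Minkowski-Weyl Theorem for polyhedrality, and obtain rationality from the fact that the atoms are lattice points. The only difference is cosmetic: the paper normalizes by assuming $M \subseteq \nn^d$ and $V = \rr^d$ via Proposition~\ref{prop:conditions defining monoids in C}, whereas you identify $V$ with $\rr^d$ through a $\zz$-basis of the free group $\gp(M)$; both identifications make the notion of rationality precise, and your extra observation that the cone is in fact simplicial is also valid.
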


\begin{proof}
	As we did in the proof of Theorem~\ref{thm:factoriality characterizations}, we assume that $M \subseteq \nn^d$ and that $V = \rr^d$. By Theorem~\ref{thm:factoriality characterizations}, the monoid $M$ is finitely generated and so $\cone_V(M)$ is the conic hull of a finite set. Therefore it follows from Farkas-Minkowski-Weyl Theorem that $\cone_V(M)$ is polyhedral. In addition, $\cone_V(M)$ is rational because each of its $1$-dimensional faces contains an element of $\mathcal{A}(M)$.
\end{proof}

\smallskip
\subsection{Half-Factorial Monoids}

The notion of half-factoriality is a weaker version of the notion of factoriality (or being a UFD). The purpose of this subsection is to offer characterizations of half-factorial monoids in the class $\mathcal{C}$ in terms of their face submonoids and in terms of the convex hulls of their sets of atoms.

\begin{definition}
	An atomic monoid $M$ is called an \emph{HFM} (or a \emph{half-factorial monoid}) if for all $x \in M^\bullet$ and $z, z' \in \mathsf{Z}(x)$, the equality $|z| = |z'|$ holds.
\end{definition}

Half-factoriality was first investigated by L.~Carlitz in the context of algebraic number fields; he proved that an algebraic number field is an HFD (i.e., a half-factorial domain) if and only if its class group has size at most two~\cite{lC60}. However, the term ``half-factorial domain" is due to A.~Zaks~\cite{aZ76}. In~\cite{aZ80}, Zaks studied Krull domains that are HFDs in terms of their divisor class groups. Parallel to this, L.~Skula~\cite{lS76} and J.~\'Sliwa~\cite{jS76}, motivated by some questions of W.~Narkiewicz on algebraic number theory~\cite[Chapter~9]{wN04}, carried out systematic studies of HFDs. Since then HFMs and HFDs have been actively studied (see \cite{CC00} and references therein).
\smallskip

HFMs in $\mathcal{C}$ can be characterized as follows.

\begin{theorem} \label{thm:HF characterizations}
	Let $M$ be a nontrivial monoid in $\mathcal{C}$, and set $V = \rr \otimes_{\zz} \emph{\gp}(M)$. The following statements are equivalent.
	\begin{enumerate}
		\item[(a)] The monoid $M$ is an HFM.
		\smallskip
		
		\item[(b)] Each face submonoid of $M$ is an HFM.
		\smallskip
		
		\item[(c)] The inequality $\dim \, \conv_V(\mathcal{A}(M)) < \dim \, \cone_V(M)$ holds.
	\end{enumerate}
\end{theorem}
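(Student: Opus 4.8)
The plan is to show that each of (a), (b), and (c) is equivalent to the existence of a single linear functional $\phi \colon V \to \rr$ with $\phi(a) = 1$ for every $a \in \mathcal{A}(M)$; this $\phi$ is a geometrically realized length function, and once it is in hand all three conditions follow quickly. As in the proof of Theorem~\ref{thm:factoriality characterizations}, part~(2) of Proposition~\ref{prop:face submonoids do not depend on the embedding} together with Corollary~\ref{cor:rank equal dimension} lets me assume that $M$ is a maximal-rank submonoid of $(\nn^d,+)$ with $V = \rr^d$ and $d = \dim \cone_V(M) = \rank(M)$. I would first recast (c) geometrically: since $M$ is atomic, $\cone_V(M) = \cone_V(\mathcal{A}(M))$, so $\lin_V(\mathcal{A}(M)) = V$ has dimension $d$. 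As $\dim \conv_V(\mathcal{A}(M)) = \dim \aff_V(\mathcal{A}(M))$, and the linear span is obtained from the affine hull by adjoining the origin, we get $\dim \aff_V(\mathcal{A}(M)) \in \{d-1,\, d\}$, with the value $d$ occurring precisely when $0 \in \aff_V(\mathcal{A}(M))$. Hence (c) is equivalent to $0 \notin \aff_V(\mathcal{A}(M))$, which is in turn equivalent to the existence of a linear functional $\phi \colon V \to \rr$ that is constant and nonzero on $\mathcal{A}(M)$; after rescaling I may take $\phi \equiv 1$ on $\mathcal{A}(M)$.

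Next I would close the loop (a) $\Leftrightarrow$ (c). For (c) $\Rightarrow$ (a), given such a $\phi$, any factorization $z = a_1 + \cdots + a_n \in \mathsf{Z}(x)$ satisfies $\phi(x) = \sum_{i=1}^n \phi(a_i) = n = |z|$, so $|z|$ depends only on $x$ and $M$ is an HFM. For (a) $\Rightarrow$ (c), half-factoriality makes the common factorization length $\ell \colon M \to \nn$ a well-defined monoid homomorphism; by the universal property of the Grothendieck group it extends to a homomorphism $\gp(M) \to \zz$, and tensoring with $\rr$ yields a linear functional $\phi \colon V \to \rr$ with $\phi|_M = \ell$. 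Since each atom has length $1$, we obtain $\phi \equiv 1$ on $\mathcal{A}(M)$, which by the previous paragraph is exactly (c).

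Finally I would handle (b). The implication (b) $\Rightarrow$ (a) is immediate because $\cone_V(M)$ is an (improper) face of itself and $M \cap \cone_V(M) = M$, so $M$ is one of its own face submonoids. For (a) $\Rightarrow$ (b), let $N = M \cap F$ be any face submonoid. Proposition~\ref{prop:atoms of face submonoids} gives $\mathcal{A}(N) = \mathcal{A}(M) \cap F \subseteq \mathcal{A}(M)$, so the functional $\phi$ produced from (a) still satisfies $\phi \equiv 1$ on $\mathcal{A}(N)$; applying the argument of (c) $\Rightarrow$ (a) to $N$ shows that every element of $N$ has all of its $N$-factorizations of the common length $\phi(x)$, whence $N$ is an HFM.

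The step I expect to require the most care is the geometric reformulation of (c): pinning down that $\dim \aff_V(\mathcal{A}(M))$ can only be $d-1$ or $d$, and that the drop to $d-1$ is exactly the separation of $\aff_V(\mathcal{A}(M))$ from the origin by a hyperplane realizable as $\{\phi = 1\}$. Everything afterward is the standard identification of a half-factorial length function with a linear functional through the Grothendieck group, which transfers to face submonoids transparently once Proposition~\ref{prop:atoms of face submonoids} identifies their atoms.
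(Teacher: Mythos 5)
Your proof is correct, and for the crucial implication it takes a genuinely different route from the paper. The paper proves (a) $\Rightarrow$ (c) by hand: it fixes $d$ linearly independent atoms $a_1,\dots,a_d$, takes the affine hyperplane $H$ through them, writes an arbitrary atom as $a=\sum_{i=1}^d q_i a_i$ with rational coefficients via Cramer's rule, and uses half-factoriality (after clearing denominators one obtains two factorizations of the same element, whose lengths must agree) to force $\sum_{i=1}^d q_i=1$, hence $a\in H$. You instead observe that half-factoriality makes the length function $\ell\colon M\to\nn$ a monoid homomorphism, push it through the universal property of $\text{gp}(M)$, and tensor with $\rr$ to obtain a linear functional equal to $1$ on $\mathcal{A}(M)$; this replaces the coordinate computation by a functorial one and lets the whole proof pivot on the single hub equivalence ``(c) $\Leftrightarrow$ there exists a linear functional $\phi$ with $\phi\equiv 1$ on $\mathcal{A}(M)$.'' The remaining pieces coincide with the paper's: your (c) $\Rightarrow$ (a) is the same evaluation $|z|=\phi(x)$ that the paper uses in its (c) $\Rightarrow$ (b) step, and your (a) $\Rightarrow$ (b) restricts $\phi$ to a face submonoid via Proposition~\ref{prop:atoms of face submonoids}, exactly the mechanism the paper leaves implicit (its displayed argument for (c) $\Rightarrow$ (b) literally only concludes that $M$ itself is an HFM). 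A genuine merit of your version is the dimension dichotomy $\dim\,\aff_V(\mathcal{A}(M))\in\{d-1,d\}$, which makes explicit that any hyperplane containing $\mathcal{A}(M)$ must miss the origin; the paper's division by $\alpha$ silently assumes $\alpha\neq 0$. What the paper's argument buys in exchange is self-containedness: it needs nothing beyond linear algebra over $\qq$, whereas yours leans on the universal property of the Grothendieck group (which the paper does record in Section~\ref{sec:background}, so the dependence is harmless).
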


\begin{proof}
	By Proposition~\ref{prop:conditions defining monoids in C}, there is not loss in assuming that $M$ is a submonoid of $(\nn^d,+)$, where $d := \rank(M)$. In this case, $V = \rr^d$.
	\smallskip
	
	(a) $\Rightarrow$ (c): Suppose that $M$ is an HFM. Since $\cone_V(\mathcal{A}(M)) = \cone_V(M)$, one can take linearly independent vectors $a_1, \dots, a_d$ in $\mathcal{A}(M)$. Take also $u \in \rr^d$ and $\alpha \in \rr$ such that the polytope $\conv(a_1, \dots, a_d)$ is contained in the affine hyperplane $H := \{r \in \rr^d \mid \langle r,u \rangle = \alpha\}$. In addition, fix $a \in \mathcal{A}(M)$, and write $a = \sum_{i=1}^d q_i a_i$ for some coefficients $q_1, \dots, q_d \in \qq$ (by Cramer's rule, the coefficients can be taken to be rationals). The equality $a = \sum_{i=1}^d q_i a_i$, along with the fact that $M$ is an HFM, guarantees that $\sum_{i=1}^d q_i = 1$. As a result,
	\[
		\langle a, u \rangle = \sum_{i=1}^d q_i \langle a_i, u \rangle = \alpha \sum_{i=1}^d q_i = \alpha,
	\]
	which means that $a \in H$. Hence $\mathcal{A}(M) \subset H$, which implies that $\dim \conv_V(\mathcal{A}(M))$ is at most $d-1$. Thus, $\dim \conv_V(\mathcal{A}(M)) < \dim \, \cone_V(M)$. 
	\smallskip
	
	(c) $\Rightarrow$ (b): Suppose that $\dim \, \conv_V(\mathcal{A}(M)) < \dim \, \cone_V(M)$. Then there exists an affine hyperplane $H$ containing $\conv_V(\mathcal{A}(M))$. As in the previous paragraph, take $u \in \rr^d$ and $\alpha \in \rr$ such that $H = \{r \in \rr^d \mid \langle r, u \rangle = \alpha\}$. Now if $x \in M^\bullet$ and $z = a_1 + \dots + a_\ell$ is a factorization in $\mathsf{Z}(x)$, then
	\[
		\ell = \frac{1}{\alpha} \sum_{i=1}^\ell \langle a_i, u \rangle = \frac{1}{\alpha} \bigg\langle \sum_{i=1}^\ell a_i, u \bigg\rangle = \frac{1}{\alpha} \langle x,u \rangle.
	\]
	Hence $\mathsf{L}(x) = \{\frac{1}{\alpha} \langle x, u \rangle\}$ for all $x \in M^\bullet$, and so $M$ is an HFM.
	\smallskip
	
	(b) $\Rightarrow$ (a): It is straightforward.
\end{proof}

\begin{cor} \label{cor:half-factorial geometric characterization}
	A nontrivial monoid $M$ in $\mathcal{C}$ is an HFM if and only if $\mathcal{A}(M)$ is contained in an affine hyperplane $H$ of $\rr \otimes_\zz \emph{gp}(M)$ not containing the origin, in which case $\mathcal{A}(M) = M \cap H$.
\end{cor}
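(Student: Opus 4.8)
The plan is to deduce the corollary directly from the equivalence (a) $\Leftrightarrow$ (c) of Theorem~\ref{thm:HF characterizations}, the only genuinely new content being the normalization of the affine hyperplane so that it misses the origin and the resulting identification of $M \cap H$ with $\mathcal{A}(M)$. As in the proof of that theorem I would fix $d := \rank(M)$ and reduce, via part~(2) of Proposition~\ref{prop:face submonoids do not depend on the embedding}, to the case $M \subseteq \nn^d \subset V = \rr^d$, so that the standard inner product $\langle \cdot, \cdot \rangle$ is available; by Corollary~\ref{cor:rank equal dimension} we then have $d = \dim V = \dim \cone_V(M)$.

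For the forward implication I would suppose $M$ is an HFM, so that Theorem~\ref{thm:HF characterizations} gives $\dim \conv_V(\mathcal{A}(M)) < \dim \cone_V(M) = d$. The key observation is that the affine hull $H := \aff_V(\mathcal{A}(M))$ is already the hyperplane we want. Indeed, since $M$ is atomic we have $\cone_V(\mathcal{A}(M)) = \cone_V(M)$, so $\lin_V(\mathcal{A}(M)) = \aff_V(\cone_V(\mathcal{A}(M)))$ has dimension $d$; that is, the atoms span $V$ linearly. Writing $m := \dim \aff_V(\mathcal{A}(M)) = \dim \conv_V(\mathcal{A}(M))$, the standard inequality $\dim \lin_V(\mathcal{A}(M)) \le m + 1$ forces $d \le m+1$, while Theorem~\ref{thm:HF characterizations} gives $m < d$. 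Hence $m = d-1$ and equality $\dim \lin_V(\mathcal{A}(M)) = m+1$ holds, which happens precisely when $0 \notin \aff_V(\mathcal{A}(M))$. Thus $H$ is an affine hyperplane of $V$ missing the origin, and trivially $\mathcal{A}(M) \subseteq H$.

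For the reverse implication, if $\mathcal{A}(M)$ lies in some affine hyperplane $H$ of $V$, then $\dim \conv_V(\mathcal{A}(M)) \le \dim H = d-1 < d = \dim \cone_V(M)$, so Theorem~\ref{thm:HF characterizations} yields that $M$ is an HFM; here the hypothesis $0 \notin H$ is not needed. Finally, to establish the identity $\mathcal{A}(M) = M \cap H$ under the standing assumption $0 \notin H$, I would write $H = \{r \in \rr^d \mid \langle r, u \rangle = \alpha\}$ for suitable $u \in \rr^d$ and $\alpha \in \rr$, noting $\alpha \neq 0$ exactly because $0 \notin H$. The inclusion $\mathcal{A}(M) \subseteq M \cap H$ is immediate. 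Conversely, any $x \in M \cap H$ is nonzero (as $0 \notin H$), and evaluating $\langle \cdot, u \rangle$ on a factorization $x = a_1 + \cdots + a_\ell$ with each $a_i \in \mathcal{A}(M) \subseteq H$ gives $\alpha = \langle x, u \rangle = \ell \alpha$, whence $\ell = 1$ and $x \in \mathcal{A}(M)$; this is the same length computation appearing in the implication (c) $\Rightarrow$ (b) of Theorem~\ref{thm:HF characterizations}. The only delicate point is the dimension count in the forward direction that upgrades ``contained in a hyperplane'' to ``contained in a hyperplane off the origin''; everything else is routine bookkeeping.
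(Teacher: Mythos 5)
Your proof is correct and follows essentially the same route as the paper: both deduce the equivalence from Theorem~\ref{thm:HF characterizations} and establish $\mathcal{A}(M) = M \cap H$ by the same length computation $\langle x, u \rangle = \ell \alpha$ with $\alpha \neq 0$. Your dimension count in the forward direction (the atoms span $V$ linearly, so $\dim \aff_V(\mathcal{A}(M)) = d-1$ and the affine hull must miss the origin) is a careful justification of the step the paper dismisses as ``an immediate consequence'' of the theorem, so it is a welcome filling-in of detail rather than a different argument.
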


\begin{proof}
	The equivalence is an immediate consequence of Theorem~\ref{thm:HF characterizations}. It is clear that $\mathcal{A}(M) \subseteq M \cap H$. For the reverse inclusion assume, as we did in the proof of Theorem~\ref{thm:HF characterizations}, that $M \subseteq \nn^d$ and $V = \rr^d$ (where $d$ is the rank of~$M$) and then take $u \in \rr^d$ and $\alpha \in \rr$ such that $H = \{ r \in \rr^d \mid \langle r,u \rangle = \alpha \}$. As $H$ does not contain the origin, $\alpha \neq 0$. If $n \in \nn_{\ge 2}$ and $a_1, \dots, a_n \in \mathcal{A}(M)$, then $\langle \sum_{i=1}^n a_i, u \rangle = \sum_{i=1}^n \langle a_i,u \rangle = n \alpha \neq \alpha$. Hence $M \cap H \subseteq \mathcal{A}(M)$. 
\end{proof}

\begin{remark}
	Fairly similar versions of Corollary~\ref{cor:half-factorial geometric characterization} have been previously established by A.~Zaks in~\cite{aZ80} and by F.~Kainrath and G.~Lettl in~\cite{KL00}.
\end{remark}

For $d \in \nn^\bullet$, the convex hull in $\rr^d$ of finitely many points with integer coordinates is called a \emph{lattice polytope}. 

\begin{example}
	Take $d \in \nn^\bullet$, and let $S$ be a finite subset of $\zz^d$. Let $P$ be the lattice polytope one obtains after taking the convex hull of $S$. Now let $L$ be the set of all points with integer coordinates contained in~$P$, and consider the submonoid $M$ of $(\zz^{d+1},+)$ defined as $M = \langle (p,1) \in \nn^{d+1} \mid p \in L \rangle$. Monoids constructed in this way are called \emph{polytopal affine monoids}. Polytopal affine monoids may not be positive in general. However, they are positive when $S$ is contained in $\nn^d$. If this is the case, then it follows from Theorem~\ref{thm:HF characterizations} that $M$ is an HFM in $\mathcal{C}$, and it follows from Corollary~\ref{cor:half-factorial geometric characterization} that $\mathcal{A}(M) = L$. Figure~\ref{fig:polytopal monoid} illustrates a lattice pentagon in the first quadrant of $\rr^2$ along with the generators in $\nn^3$ of its polytopal affine monoid.
	\begin{figure}[h]
		\begin{center}
			\includegraphics[width=5cm]{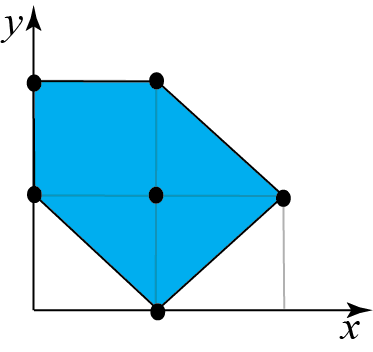}
			\hspace{2cm}
			\includegraphics[width=5.4cm]{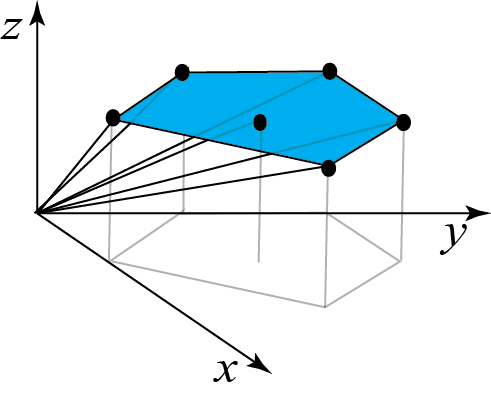}
			\caption{On the left, a lattice pentagon in $\rr^2$. On the right, its polytopal monoid in $\rr^3$.}
			\label{fig:polytopal monoid}
		\end{center}
	\end{figure}
\end{example}

The chain of implications~(\ref{eq:monoid atomicity taxonomy}), where being a UFM, an HFM, and an atomic monoid are included, has received a great deal of attention since it was first studied (in the context of integral domains) by Anderson, Anderson, and Zafrullah~\cite{AAZ90}:
\begin{equation} \label{eq:monoid atomicity taxonomy}
	\text{UFM} \ \Rightarrow \ [\text{HFM}, \text{FFM}] \ \Rightarrow \ \text{BFM} \ \Rightarrow \ \text{ACCP monoid} \ \Rightarrow \ \text{atomic monoid}.
\end{equation}
The first two implications above are obvious, while the last two implications follow from~\cite[Proposition~1.1.4]{GH06b} and~\cite[Corollary~1.3.3]{GH06b}. In addition, none of the implications is reversible, and examples witnessing this observation (in the context of integral domains) can be found in~\cite{AAZ90}. We have already seen that not every monoid in $\mathcal{C}$ is an HFM. However, each monoid in $\mathcal{C}$ is an FFM, as the following proposition illustrates.

\begin{prop} \label{prop:monoids in C are FF monoids}
	Each monoid in $\mathcal{C}$ is an FFM.
\end{prop}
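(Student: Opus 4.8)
The plan is to realize $M$ concretely inside a finite-rank free commutative monoid and then exploit the coordinatewise order it inherits: this order simultaneously limits which atoms can divide a fixed element and bounds the lengths of its factorizations. Since being an FFM is invariant under monoid isomorphism, Proposition~\ref{prop:conditions defining monoids in C} lets me assume without loss of generality that $M$ is a submonoid of $(\nn^d,+)$, where $d = \rank(M)$. Fix an arbitrary $x = (x_1, \dots, x_d) \in M$; the goal is to prove that $\mathsf{Z}(x)$ is finite.

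First I would observe that every atom appearing in some factorization of $x$ must divide $x$ in $M$: if $a \in \mathcal{A}(M)$ occurs in $z \in \mathsf{Z}(x)$, then subtracting $a$ from $x$ leaves an element of $M \subseteq \nn^d$, so $a \le x$ holds coordinatewise. Consequently every such atom lies in the finite box $\big\{ y \in \nn^d \mid y_i \le x_i \text{ for all } i \in \ldb 1,d \rdb \big\} \cap \mathcal{A}(M)$, which I denote by $\{a_1, \dots, a_m\}$.

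Next I would bound the length of any factorization by means of the coordinate-sum map $\sigma \colon \nn^d \to \nn$ given by $\sigma(y) = \sum_{i=1}^d y_i$, which is a monoid homomorphism sending every nonzero vector to a positive integer. In particular $\sigma(a_j) \ge 1$ for each $j$, so if $z = \sum_{j=1}^m c_j a_j \in \mathsf{Z}(x)$ with $c_1, \dots, c_m \in \nn$, then $\sigma(x) = \sum_{j=1}^m c_j \, \sigma(a_j) \ge \sum_{j=1}^m c_j = |z|$, whence $|z| \le \sigma(x)$.

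Combining the two observations, each factorization of $x$ is completely determined by its multiplicity vector $(c_1, \dots, c_m) \in \ldb 0, \sigma(x) \rdb^m$, and therefore $|\mathsf{Z}(x)| \le (\sigma(x) + 1)^m < \infty$. As $x$ was arbitrary, this shows that $M$ is an FFM. I do not anticipate a genuine obstacle here; the only step needing care is the finiteness of the set of atoms dividing $x$, which rests entirely on the coordinatewise order supplied by the embedding $M \hookrightarrow \nn^d$ and would fail for the general finite-rank torsion-free monoids lying outside $\mathcal{C}$.
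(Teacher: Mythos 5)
Your proof is correct and follows essentially the same route as the paper: reduce to a submonoid of $(\nn^d,+)$ via Proposition~\ref{prop:conditions defining monoids in C}, then use positivity to show only finitely many atoms divide a fixed $x$, hence $\mathsf{Z}(x)$ is finite (the paper bounds divisors of $x$ by the Euclidean norm via $\langle y, y' \rangle \ge 0$, whereas you use the coordinatewise order, a cosmetic difference). If anything, your argument is slightly more complete, since the explicit length bound $|z| \le \sigma(x)$ from the coordinate-sum homomorphism justifies the paper's final step ``and so $\mathsf{Z}(x)$ is also finite,'' which the paper leaves implicit.
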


\begin{proof}
	By Proposition~\ref{prop:conditions defining monoids in C}, it suffices to show that for every $d \in \nn^\bullet$, any submonoid~$M$ of $(\nn^d,+)$ is an FFM. Fix $x \in M$. It is clear that $\langle x, y \rangle \ge 0$ for all $y \in M$. Thus, $y \mid_M x$ implies that $\norm{y} \le \norm{x}$. As a result, the set $\{a \in \mathcal{A}(M) : a \mid_M x\}$ is finite, and so $\mathsf{Z}(x)$ is also finite. Hence $M$ is an FFM.
\end{proof}

As an immediate consequence of Proposition~\ref{prop:monoids in C are FF monoids}, each monoid in $\mathcal{C}$ satisfies the last four conditions in the chain of implications~(\ref{eq:monoid atomicity taxonomy}).

\smallskip
\subsection{Other-Half-Factorial Monoids}

Other-half-factoriality is a dual notion of half-factoriality that was introduced by Coykendall and Smith in~\cite{CS11}.

\begin{definition}
	An atomic monoid $M$ is called an \emph{OHFM} (or an \emph{other-half-factorial monoid}) if for all $x \in M^\bullet$ and $z, z' \in \mathsf{Z}(x)$ the equality $|z| = |z'|$ implies that $z = z'$.
\end{definition}

Although an integral domain is a UFD if and only if its multiplicative monoid is an OHFM~\cite[Corollary~2.11]{CS11}, in general an OHFM is not a UFM (or an HFM), as one can deduce from the next theorem.

A set of points in a $d$-dimensional vector space $V$ over $\rr$ is said to be \emph{affinely independent} provided that for every $k \in \ldb 2,d+1 \rdb$ no $k$ of such points lie in a $(k - 2)$-dimensional affine subspace of $V$. If a set is affinely independent, its points are said to be in \emph{general linear position}.

\begin{theorem} \label{thm:OHF characterizations}
	Let $M$ be a nontrivial monoid in $\mathcal{C}$, and set $V = \rr \otimes_{\zz} \emph{\gp}(M)$. The following statements are equivalent.
	\begin{enumerate}
		\item[(a)] The monoid $M$ is an OHFM.
		\smallskip
		
		\item[(b)] Every face submonoid of $M$ is an OHFM.
		\smallskip
		
		\item[(c)] The points in $\mathcal{A}(M)$ are affinely independent in $V$.
		\smallskip
		
		\item[(d)] The cone $\conv_V(\mathcal{A}(M))$ is a simplex, whose dimension is either $\emph{\rank}(M)-1$ or $\emph{\rank}(M)$.
	\end{enumerate}
\end{theorem}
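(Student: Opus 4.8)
The plan is to route every implication through the combinatorial condition~(c), which is the natural bridge between the arithmetic statement~(a) and the geometric statement~(d). As in the proofs of Theorems~\ref{thm:factoriality characterizations} and~\ref{thm:HF characterizations}, I would first use Proposition~\ref{prop:conditions defining monoids in C} together with part~(2) of Proposition~\ref{prop:face submonoids do not depend on the embedding} and Corollary~\ref{cor:rank equal dimension} to reduce to the case $M \subseteq \nn^d$ with $V = \rr^d$ and $d = \rank(M) = \dim \cone_V(M)$. The key device driving the whole argument is a dictionary between factorizations and affine relations: two distinct factorizations $z, z'$ of a common $x \in M^\bullet$ of equal length correspond to finitely supported nonnegative integer coefficient vectors $(c_a)_{a \in \mathcal{A}(M)}$ and $(d_a)_{a \in \mathcal{A}(M)}$ satisfying $\sum_a c_a a = \sum_a d_a a$ and $\sum_a c_a = \sum_a d_a$; setting $\lambda_a := c_a - d_a$ produces a nonzero finitely supported integer vector with $\sum_a \lambda_a a = 0$ and $\sum_a \lambda_a = 0$. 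Thus $M$ fails to be an OHFM precisely when such a nonzero integer relation exists.

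For the equivalence (a)~$\Leftrightarrow$~(c), I would show that the existence of a nonzero integer relation as above is equivalent to affine dependence of $\mathcal{A}(M)$ over $\rr$. An integer relation is in particular a real one, witnessing affine dependence. For the converse, an affine dependence of $\mathcal{A}(M)$ over $\rr$ is witnessed by finitely many atoms $a_1, \dots, a_m$; since these lie in $\qq^d$, Cramer's rule shows that the space of real solutions $(\mu_i)$ to the homogeneous system $\sum_i \mu_i a_i = 0$, $\sum_i \mu_i = 0$ is defined over $\qq$, so a nonzero rational, hence after clearing denominators a nonzero integer, relation exists. To recover two genuine factorizations from a nonzero integer relation $\lambda$, I would split $\lambda_a = \lambda_a^+ - \lambda_a^-$ into its positive and negative parts; then $\sum_a \lambda_a^+ a = \sum_a \lambda_a^- a$ and $\sum_a \lambda_a^+ = \sum_a \lambda_a^-$, while pointedness of $\cone_V(M)$ (Proposition~\ref{prop:cones of members of C are pointed}) guarantees $\lambda^+ \neq 0$, so that $x := \sum_a \lambda_a^+ a \in M^\bullet$ has two distinct factorizations of the same length. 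This rationality bridge, together with the need to handle a possibly infinite $\mathcal{A}(M)$ by reducing to finite subsets, is the main obstacle, although it is routine once the dictionary is in place.

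The equivalence (c)~$\Leftrightarrow$~(d) is pure convex geometry. If $\mathcal{A}(M)$ is affinely independent, then, being affinely independent points in the $d$-dimensional space $V$, it is finite with $|\mathcal{A}(M)| \le d+1$, and $\conv_V(\mathcal{A}(M))$ is a simplex whose vertex set is $\mathcal{A}(M)$ and whose dimension is $|\mathcal{A}(M)| - 1$. Because $\cone_V(\mathcal{A}(M)) = \cone_V(M)$ spans $V$, the atoms linearly span $V$, forcing $|\mathcal{A}(M)| \ge d$; hence $|\mathcal{A}(M)| \in \{d, d+1\}$ and the simplex has dimension $d - 1 = \rank(M) - 1$ (when the atoms are linearly independent, so $|\mathcal{A}(M)| = d$) or $d = \rank(M)$ (when $|\mathcal{A}(M)| = d+1$). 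Conversely, reading (d) in the only way that makes it equivalent to the others, namely that $\conv_V(\mathcal{A}(M))$ is a simplex whose vertices are exactly the atoms, the atoms are the vertices of a simplex and hence affinely independent; the dimension clause merely records the two admissible cardinalities.

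Finally, for (a)~$\Leftrightarrow$~(b), the implication (b)~$\Rightarrow$~(a) is immediate, since $M$ is itself the face submonoid determined by the improper face $\cone_V(M)$. For (a)~$\Rightarrow$~(b), I would invoke the already-established equivalence (a)~$\Leftrightarrow$~(c) together with Proposition~\ref{prop:atoms of face submonoids}: if $N = M \cap F$ is a face submonoid, then $\mathcal{A}(N) = \mathcal{A}(M) \cap F \subseteq \mathcal{A}(M)$, and any subset of an affinely independent set is affinely independent, so $N$ satisfies condition~(c) and is therefore an OHFM.
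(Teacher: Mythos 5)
Your proposal is correct, and its skeleton---reduce to $M \subseteq \nn^d$ with $V = \rr^d$, make condition~(c) the hub, and obtain~(b) from Proposition~\ref{prop:atoms of face submonoids} together with the fact that subsets of affinely independent sets are affinely independent---is the same as the paper's. The genuine difference lies in how (a)~$\Leftrightarrow$~(c) is organized. The paper proves (a)~$\Rightarrow$~(c) essentially as you do (a real affine dependence among finitely many atoms becomes, via Cramer's rule, a rational one, whose positive and negative parts produce two distinct equal-length factorizations), but for (c)~$\Rightarrow$~(a) it runs a separate contradiction argument with a case split on $|\mathcal{A}(M)| \in \{d, d+1\}$: the case $|\mathcal{A}(M)| = d$ is settled by invoking Theorem~\ref{thm:factoriality characterizations} (such an $M$ is a UFM, hence an OHFM), and the case $|\mathcal{A}(M)| = d+1$ by pairing the putative equal-length relation against a hyperplane through $d$ of the atoms. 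Your two-way dictionary---distinct equal-length factorizations of a common element correspond exactly to nonzero finitely supported integer vectors $\lambda$ with $\sum_a \lambda_a a = 0$ and $\sum_a \lambda_a = 0$---makes that direction immediate: a failure of other-half-factoriality hands you the affine dependence $\lambda = c - c'$ outright, so no case analysis and no appeal to the UFM characterization are needed; the small price is the bookkeeping you correctly flag (pointedness to ensure $\lambda^+ \neq 0 \neq \lambda^-$, hence that the common element is nonzero and the two factorizations are genuinely distinct). Finally, your treatment of (c)~$\Leftrightarrow$~(d) is more careful than the paper's, which dismisses the equivalence as a restatement: read literally, (d) can hold while (a) and (c) fail---for $M = \langle 3,4,5 \rangle$ the set $\conv_{\rr}(\mathcal{A}(M)) = [3,5]$ is a $1$-simplex of dimension $\rank(M)$, yet $3+5 = 4+4$ gives two distinct factorizations of length~$2$---so your insistence that (d) be read as ``$\conv_V(\mathcal{A}(M))$ is a simplex whose vertex set is $\mathcal{A}(M)$'' is exactly the reading under which the theorem is true.
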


\begin{proof}
	Set $d := \rank(M)$. By Proposition~\ref{prop:conditions defining monoids in C}, there exists a submonoid $M'$ of $(\nn^d,+)$ such that $M' \cong M$. By tensoring both $\zz$-modules $\zz$ and $\gp(M)$ with $\rr$, one can extend any monoid isomorphism from $M'$ to $M$ to a linear isomorphism from $\rr^d$ to $V$. Since the property of being affinely independent is clearly preserved by isomorphisms, there is no loss of generality in assuming that $M$ is a submonoid of $(\nn^d,+)$, and we do so.
	\smallskip
	
	(a) $\Rightarrow$ (c): Assume that $M$ is an OHFM and suppose, by way of contradiction, that the points in $\mathcal{A}(M)$ are not affinely independent. Then there exist $k \in \ldb 2,d+1 \rdb$ and distinct vectors $a_1, \dots, a_k \in \mathcal{A}(M)$ contained in a $(k-2)$-dimensional affine subspace~$W$ of $V$. Let $M'$ denote now the submonoid of $M$ generated by the atoms $a_1, \dots, a_k$. Since $W - a_k$ is a $(k-2)$-dimensional subspace of $V$, the vectors $a_1 - a_k, \dots, a_{k-1} - a_k$ are linearly dependent in $W - a_k$. This, along with Cramer's rule, guarantees that  $\sum_{i=1}^{k-1} q_i (a_i - a_k) = 0$ for some rational coefficients $q_1, \dots, q_{k-1}$ (not all zeros). After relabeling vectors and coefficients, we can assume the existence of $j \in \ldb 1,k-2 \rdb$ such that $q_1, \dots, q_j$ are negative and $q_{j+1}, \dots, q_{k-1}$ are nonnegative. Set
	\[
		x := \sum_{i=1}^j (-mq_i) a_k + \sum_{i=j+1}^{k-1}(mq_i)a_i \in M',
	\]
	where $m$ is the least common multiple of the denominators of all the nonzero $q_i$'s. Then
	\[
		z := \bigg(\sum_{i=1}^j (-mq_i) \bigg)a_k + \sum_{i=j+1}^{k-1}(mq_i)a_i \quad \text{ and } \quad z' := \quad \sum_{i=1}^j (-mq_i)a_i + \bigg(\sum_{i=j+1}^{k-1} (mq_i) \bigg) a_k
	\]
	are two factorizations in $\mathsf{Z}_{M'}(x)$ having the same length. As $a_1, \dots, a_k$ are also atoms of~$M$, it follows that $z$ and $z'$ are also factorizations in $\mathsf{Z}_M(x)$, which contradicts that~$M$ is an OHFM.
	\smallskip
	
	(c) $\Rightarrow$ (a): Suppose that the points in $\mathcal{A}(M)$ are affinely independent in $V$. We have seen in the proof of Theorem~\ref{thm:factoriality characterizations} that $|\mathcal{A}(M)| \ge \dim \, \cone_V(M)$. So $|\mathcal{A}(M)| \in \{d, d+1\}$. If $|\mathcal{A}(M)| = d$, then Theorem~\ref{thm:factoriality characterizations} ensures that $M$ is a UFM and, therefore, an OHFM. Thus, we assume that $|\mathcal{A}(M)| = d+1$. Let $\mathcal{A}(M) =: \{a_0, a_1, \dots, a_d\}$ and suppose, by way of contradiction, that $M$ is not an OHFM. This guarantees the existence of two distinct nonzero $(d+1)$-tuples $(m_0, \dots, m_d)$ and $(n_0, \dots, n_d)$ in $\nn^d$ satisfying that
	\[
		\sum_{i=0}^d m_i a_i = \sum_{i=0}^d n_i a_i \quad \text{ and } \quad \sum_{i=0}^d (m_i - n_i) = 0.
	\]
	Assume, without loss of generality, that $m_0 \neq n_0$. Let $H$ be an affine hyperplane in $V$ containing $a_1, \dots, a_d$. Take $u \in \rr^d$ and $\alpha \in \rr$ such that $H = \{r \in \rr^d \mid \langle r,u \rangle = \alpha\}$. As the points $a_0, a_1, \dots, a_d$ are affinely independent, $a_0 \notin H$. Then
	\begin{align*}
		0  = \sum_{i=0}^d (m_i - n_i) \langle u, a_i \rangle = (m_0 - n_0) \langle u, a_0 \rangle + \alpha \sum_{i=1}^d (m_i - n_i) = (m_0 - n_0) (\langle u, a_0 \rangle - \alpha).
	\end{align*}
	As a result, $\langle u, a_0 \rangle = \alpha$, which contradicts that $a_0$ does not belong to $H$. Hence~$M$ must be an OHFM.
	\smallskip
	
	(c) $\Rightarrow$ (b): Suppose that the points in $\mathcal{A}(M)$ are affinely independent. Let $N$ be a face submonoid of $M$, and let $F$ be a face of $\cone_V(M)$ satisfying that $N = M \cap F$. Proposition~\ref{prop:atoms of face submonoids} ensures that $\mathcal{A}(N) = \mathcal{A}(M) \cap F$. Since the set of points $\mathcal{A}(M)$ is affinely independent, the set of points $\mathcal{A}(N)$ is also affinely independent. As we have already proved the equivalence of (a) and (c), we can conclude that $N$ is an OHFM. Hence statement~(b) follows.
	\smallskip
	
	(b) $\Rightarrow$ (a): It is clear.
	\smallskip
	
	(c) $\Leftrightarrow$ (d): These two statements are obviously restatements of each other.
\end{proof}

\begin{cor} \label{cor:characterization of OHF NS}
	Let $N$ be a numerical monoid. Then $N$ is an OHFM if and only if the embedding dimension of $N$ is at most $2$.
\end{cor}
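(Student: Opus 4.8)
The plan is to reduce the statement entirely to Theorem~\ref{thm:OHF characterizations} by exploiting that a numerical monoid $N$ has rank $1$. First I would record the two elementary facts we need. Since $N$ is reduced, its minimal generating set is exactly $\mathcal{A}(N)$, so the \emph{embedding dimension} of $N$ (the cardinality of a minimal generating set) equals $|\mathcal{A}(N)|$. Moreover, $\gp(N) = \zz$, so the ambient space $V = \rr \otimes_\zz \gp(N)$ satisfies $V \cong \rr$ and $\dim V = \rank(N) = 1$. As $N$ is a nontrivial (indeed infinite) finitely generated submonoid of $(\nn,+)$, it lies in $\mathcal{C}$, and Theorem~\ref{thm:OHF characterizations} applies.

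Next I would invoke the equivalence (a) $\Leftrightarrow$ (c) of Theorem~\ref{thm:OHF characterizations}: the monoid $N$ is an OHFM precisely when the points of $\mathcal{A}(N)$ are affinely independent when regarded as vectors in $V \cong \rr$. The crux is then the purely geometric observation that a finite set of points in a $1$-dimensional space is affinely independent if and only if it has at most two elements. Indeed, a single point is trivially affinely independent, two distinct points span a $1$-dimensional (hence maximal) affine subspace, and any three points on a line are affinely dependent. Combining this with the identity $|\mathcal{A}(N)| = \dim_{\text{emb}} N$ yields exactly that $N$ is an OHFM if and only if its embedding dimension is at most $2$.

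I expect no serious obstacle here, as the corollary is a direct specialization of Theorem~\ref{thm:OHF characterizations} to the rank-$1$ setting; the only point deserving a moment's care is the degenerate geometry of affine independence on a line. One should note explicitly that the one-atom case (where $N \cong \nn$ is a UFM, hence an OHFM) and the two-atom case both meet the criterion, while three or more (necessarily collinear) atoms do not. Alternatively, one could argue through condition~(d): for $\rank(N) = 1$ the simplex $\conv_V(\mathcal{A}(N))$ is permitted to have dimension $0$ or $1$, that is, to be a point or a segment, and requiring $\mathcal{A}(N)$ to be its vertex set forces $|\mathcal{A}(N)| \le 2$. Either route delivers the stated equivalence.
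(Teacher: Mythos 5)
Your proof is correct and is essentially the paper's own route: the paper gives no separate argument for this corollary, treating it as an immediate specialization of Theorem~\ref{thm:OHF characterizations} to the rank-one case, which is exactly what you carry out (embedding dimension equals $|\mathcal{A}(N)|$ since $N$ is reduced, and affine independence in a one-dimensional space forces $|\mathcal{A}(N)| \le 2$). One caveat worth recording: under the paper's literal definition of affine independence, where $k$ ranges only over $\ldb 2, d+1 \rdb$, your claim that three collinear points in $\rr$ are affinely dependent would fail for $d = 1$ (the only check would be pairwise distinctness); however, the paper's own proof of (c) $\Rightarrow$ (a) tacitly uses the standard, unrestricted notion of affine independence (it deduces $|\mathcal{A}(M)| \le d+1$ from it), so the standard notion you use is clearly the intended one, and your argument is sound with respect to it.
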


\begin{remark}
	Theorem~\ref{thm:OHF characterizations} was indeed motivated by Corollary~\ref{cor:characterization of OHF NS}, which was first proved by Coykendall and Smith in~\cite{CS11}.
\end{remark}

The fact that every proper face submonoid of a monoid $M$ in $\mathcal{C}$ is an OHFM does not guarantee that $M$ is an OHFM, as one can see in the following example.

\begin{example}
	Consider the submonoid $M := \langle 2e_1, 3e_1, 2e_2, 3e_2 \rangle$ of $(\nn^2,+)$. It is easy to argue that $\mathcal{A}(M) = \{2e_1, 3e_1, 2e_2, 3e_2\}$. Observe that the $1$-dimensional faces of $\cone_{\rr^2}(M)$ are $\rr_{\ge 0} e_1$ and $\rr_{\ge 0} e_2$. Then there are two face submonoids of $M$ corresponding to $1$-dimensional faces of $\cone_{\rr^2}(M)$, and they are both isomorphic to the numerical monoid $\langle 2, 3 \rangle$, which is an OHFM by Corollary~\ref{cor:characterization of OHF NS}. Hence every proper face submonoid of $M$ is an OHFM. However, $\conv_{\rr^2}(\mathcal{A}(M))$ is not a simplex and, therefore, it follows from Theorem~\ref{thm:OHF characterizations} that $M$ is not an OHFM. 
\end{example}

We conclude this section with the following proposition.

\begin{prop} \label{prop:non-factorial face submoniods of OHF-monoids}
	Let $M$ be an OHFM in $\mathcal{C}$, and set $V = \rr \otimes_{\zz} \emph{\gp}(M)$. Then the faces of $\cone_V(M)$ whose corresponding face submonoids are not UFMs form a (possibly empty) interval in the face lattice $\mathsf{F}(\cone_V(M))$.
\end{prop}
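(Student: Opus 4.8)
The plan is to translate the UFM-ness of each face submonoid into a purely linear condition on the atoms lying in the corresponding face, and then to read off the resulting family of faces from the essentially unique linear relation among the atoms of $M$. As in the proof of Theorem~\ref{thm:OHF characterizations}, I would set $d := \rank(M)$ and assume $M \subseteq \nn^d$ with $V = \rr^d$. By Theorem~\ref{thm:OHF characterizations} the set $\mathcal{A}(M)$ is affinely independent, and (as in the proof of Theorem~\ref{thm:factoriality characterizations}) this forces $|\mathcal{A}(M)| \in \{d, d+1\}$. For a face $F$ of $\cone_V(M)$ write $N_F := M \cap F$. Proposition~\ref{prop:atoms of face submonoids} gives $\mathcal{A}(N_F) = \mathcal{A}(M) \cap F$, and part~(3) of Proposition~\ref{prop:face submonoids do not depend on the embedding} gives $\cone_V(N_F) = F$, so $\dim F = \dim \lin_V(\mathcal{A}(M) \cap F)$. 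Hence Theorem~\ref{thm:factoriality characterizations} tells us that $N_F$ is a UFM if and only if $|\mathcal{A}(M) \cap F| = \dim F$.

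Next I would record the elementary fact that for a finite affinely independent set $S \subseteq V$ one has $\dim \lin_V(S) = |S|$ when $0 \notin \aff_V(S)$ and $\dim \lin_V(S) = |S| - 1$ when $0 \in \aff_V(S)$. Applying this to $S = \mathcal{A}(M) \cap F$ (which is affinely independent, being a subset of $\mathcal{A}(M)$) and combining with the previous paragraph shows that $N_F$ fails to be a UFM precisely when $0 \in \aff_V(\mathcal{A}(M) \cap F)$; and, once more invoking affine independence of $\mathcal{A}(M)$, this occurs exactly when the vectors of $\mathcal{A}(M) \cap F$ are linearly dependent.

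The crux is then to analyze the space of linear relations among $\mathcal{A}(M)$, whose dimension is $|\mathcal{A}(M)| - d \in \{0,1\}$. If $|\mathcal{A}(M)| = d$, this space is trivial, no set $\mathcal{A}(M) \cap F$ is linearly dependent, and the family of non-UFM faces is empty (consistently, $M$ itself is a UFM here by Theorem~\ref{thm:factoriality characterizations}); the empty family is the degenerate interval and we are done. If $|\mathcal{A}(M)| = d+1$, I would fix a nonzero generator $\lambda = (\lambda_a)_{a \in \mathcal{A}(M)}$ of the one-dimensional relation space and put $T := \{a \in \mathcal{A}(M) : \lambda_a \neq 0\}$. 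Since every linear relation among the atoms is a scalar multiple of $\lambda$, the set $\mathcal{A}(M) \cap F$ is linearly dependent if and only if $T \subseteq \mathcal{A}(M) \cap F$, that is, $T \subseteq F$. Letting $F_0$ be the smallest face of $\cone_V(M)$ containing $T$—which exists because a nonempty intersection of faces is again a face and $T$ lies in the face $\cone_V(M)$—the non-UFM faces are exactly $\{F \in \mathsf{F}(\cone_V(M)) : F_0 \subseteq F\} = [F_0, \cone_V(M)]$, an interval in $\mathsf{F}(\cone_V(M))$.

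The main obstacle is the careful bookkeeping in the middle step: one must cleanly separate linear from affine dependence and verify that, for an affinely independent configuration, every linear dependence is governed by the single circuit $\lambda$. A secondary point to nail down is that $\dim F = \dim \lin_V(\mathcal{A}(M) \cap F)$ holds for \emph{every} face $F$, including the edge case $F = \{0\}$ where $\mathcal{A}(M) \cap F = \emptyset$ and both sides vanish; this is exactly what Proposition~\ref{prop:face submonoids do not depend on the embedding}(3) together with the atomicity of $N_F$ provides.
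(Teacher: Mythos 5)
Your proof is correct, and it takes a genuinely different route from the paper's. The paper never invokes the affine-independence characterization of OHFMs: writing $\mathcal{N}$ for the set of faces with non-UFM face submonoids, it supposes $\mathcal{N}$ has two distinct minimal elements $F$ and $F'$, uses their minimality to bound the rank of $\mathcal{A}(M\cap F)\cup\mathcal{A}(M\cap F')$ by its cardinality minus two, and then intersects the (at least two-dimensional) kernel of the associated matrix with the hyperplane $\sum_i x_i=0$ to manufacture two distinct equal-length factorizations of one element, contradicting other-half-factoriality straight from the definition; uniqueness of the minimal non-UFM face, plus the observation that every face above a non-UFM face is non-UFM, then yields the interval. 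You instead use Theorem~\ref{thm:OHF characterizations} as a black box: affine independence of $\mathcal{A}(M)$ forces the space of linear relations among the atoms to have dimension at most one, the reductions via Theorem~\ref{thm:factoriality characterizations}, Proposition~\ref{prop:face submonoids do not depend on the embedding}(3), and Proposition~\ref{prop:atoms of face submonoids} translate ``$M\cap F$ is not a UFM'' into ``$\mathcal{A}(M)\cap F$ is linearly dependent,'' and the support $T$ of the unique circuit then identifies the non-UFM faces as exactly the faces containing $T$. What your route buys is an explicit description of the bottom of the interval---the smallest face containing $T$---together with a cleaner structural criterion, where the paper only establishes that a minimal non-UFM face is unique; upward-closedness of $\mathcal{N}$ is automatic in your setup rather than a separate remark. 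What the paper's route buys is independence from Theorem~\ref{thm:OHF characterizations}: it re-runs the same equal-length-factorization construction used to prove that theorem, so it needs only the definition of an OHFM. One small point worth making explicit in your write-up: when you apply Theorem~\ref{thm:factoriality characterizations} to $N_F$, the dimension appearing there is computed in $\rr\otimes_\zz \gp(N_F)$, so you should note (via Corollary~\ref{cor:rank equal dimension}) that this dimension equals $\rank(N_F)=\dim\,\cone_V(N_F)=\dim F$, which legitimizes the criterion $|\mathcal{A}(M)\cap F|=\dim F$; this is a routine verification and does not affect the correctness of your argument.
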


\begin{proof}
	In light of Proposition~\ref{prop:conditions defining monoids in C} and Proposition~\ref{prop:combinatorial and geometric equivalence of cones}, one can assume that $M$ is a submonoid of $(\nn^d,+)$, where $d := \rank(M)$. In this case, $V = \rr^d$.
	
	Let $\mathcal{N}$ consist of all faces of $\cone_V(M)$ whose corresponding face submonoids are not UFMs. If $M$ is a UFM, it follows from Theorem~\ref{thm:factoriality characterizations} that every face submonoid of $M$ is also a UFM and, therefore, $\mathcal{N}$ is empty. So we assume that $M$ is not a UFM.
	
	Among all the faces in $\mathcal{N}$, let $F$ and $F'$ be minimal in $\mathsf{F}(\cone_V(M))$. Suppose, by way of contradiction, that $F \neq F'$. Set $N := M \cap F$ and $N' := M \cap F'$. It follows from Proposition~\ref{prop:atoms of face submonoids} that $F = \cone_V(N)$ and $F' = \cone_V(N')$. Since $F$ and $F'$ are minimal, they are not comparable and so we can take $a \in \mathcal{A}(N) \setminus \mathcal{A}(N')$ and $a' \in \mathcal{A}(N') \setminus \mathcal{A}(N)$. Once again, one can rely on the minimality of $F$ and $F'$ to obtain
	\[
		\rank (\mathcal{A}(N)) = \rank (\mathcal{A}(N) \setminus \{a\}) \quad \text{ and }  \quad \rank (\mathcal{A}(N')) = \rank (\mathcal{A}(N') \setminus \{a'\}).
	\]
	As a result, the rank of the set $\mathcal{A} := \mathcal{A}(N) \cup \mathcal{A}(N')$ is at most $|\mathcal{A}|-2$. Set $n := |\mathcal{A}|$, and let $A$ be the $d \times n$ real matrix whose columns are the vectors in $\mathcal{A}$ (after some order is fixed). Then $\rk \, A = \rk(\mathcal{A}) \le n-2$. Thus, $\dim \, \ker  A \ge 2$. Consider the hyperplane of $\rr^n$ defined by
	\[
		H := \bigg\{(x_1, \dots, x_n) \in \rr^n \ \bigg{|} \ \sum_{i=1}^n x_i = 0 \bigg\}
	\]
	and notice that
	\[
		\dim (H \cap \ker A) = \dim H + \dim \ker A - \dim \, \text{span}(H \cup \ker \, A) \ge 1.
	\]
	Therefore there is a nonzero vector $(q_1, \dots, q_n) \in \ker A$ satisfying that $\sum_{i=1}^n q_i = 0$. First, taking $j \in \ldb 1,n \rdb$ such that  $q_1, \dots, q_j \le 0$ and $q_{j+1}, \dots, q_n > 0$, then taking~$m$ to be the least common multiple of the denominators of all the nonzero $q_i$'s, and finally proceeding as we did in the second paragraph of the proof of Theorem~\ref{thm:OHF characterizations}, we can obtain two distinct factorizations of the same element of $M$ having the same length. However, this contradicts that $M$ is an OHFM. Hence there exists only one minimal face of $\cone_V(M)$ whose face submonoid is not a UFM, namely, $F$. Clearly, the face submonoid of any face containing $F$ cannot be a UFM. This implies that $[F, \cone_V(M)] \subseteq \mathcal{N}$. The reverse inclusion follows from the uniqueness of a minimal face in $\mathcal{N}$. Hence $\mathcal{N}$ is the interval $[F, \cone_V(M)]$.
\end{proof}

The reverse implication of Proposition~\ref{prop:non-factorial face submoniods of OHF-monoids} does not hold, as the following example illustrates.

\begin{example}
	Consider the submonoid $M := \big\langle 3e_1, 3e_2, 2e_3, 3e_3 \big\rangle$ of $(\nn^3, +)$. It can be readily verified that $\mathcal{A}(M) = \{ 3e_1, 3e_2, 2e_3, 3e_3 \}$. Since $\{2e_3, 3e_3\}$ is an affinely dependent set, it follows from Theorem~\ref{thm:OHF characterizations} that $M$ is not an OHFM. However, the non-UFM face submonoids of $M$ are precisely those determined by the faces of $\cone_V(M)$ contained in the interval $[\rr e_3, \cone_V(M)]$. The face lattice of $\cone_{\rr^3}(M)$ is shown in Figure~\ref{fig:OHF counterexample}.
	\begin{figure}[h]
		\begin{center}
			\includegraphics[width=8cm]{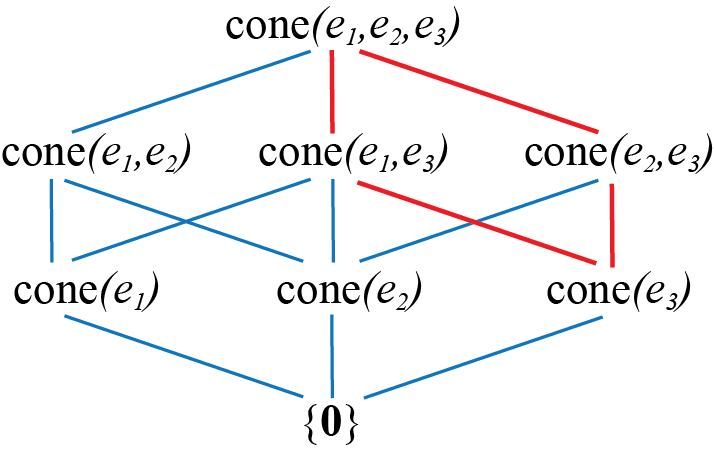}
			\caption{The face lattice of the cone $\cone_{\rr^3}(M)$ with the interval $[\cone(e_3), \cone(e_1, e_2, e_3)]$ consisting of all non-UFM face submonoids of~$M$ (highlighted in red color).}
			\label{fig:OHF counterexample}
		\end{center}
	\end{figure}
\end{example}

\medskip
\section{Cones of Primary Monoids and Finitary Monoids}
\label{sec:primary monoids and finitary monoids}

 As mentioned at the beginning of this paper, the classes of primary monoids and finitary monoids have been crucial in the development of factorization theory as the arithmetic structure of their members abstracts certain properties of important classes of integral domains. The first part of this section is devoted to investigate some geometric aspects of primary monoids in $\mathcal{C}$. Then we shift our focus to the study of finitary monoids of $\mathcal{C}$.

\smallskip
\subsection{Primary Monoids}

Recall that a monoid $M$ is primary if $M$ is nontrivial and for all $x,y \in M^\bullet$ there exists $n \in \nn$ such that $ny \in x + M$. The study of primary monoids was initiated by T.~Tamura~\cite{tT74} and M.~Petrich~\cite{mP73} in the 1970s and has received a great deal of attention since then~\cite{fHK95,fHK95a,aG96}. Primary monoids naturally appear in commutative algebra: an integral domain is $1$-dimensional and local if and only if its multiplicative monoid is primary \cite[Theorem~2.1]{aG96}.

The primary monoids in $\mathcal{C}$ are precisely those minimizing the number of face submonoids.

\begin{prop} \label{prop:primary characterization}
	Let $M$ be a nontrivial monoid in $\mathcal{C}$, and set $V = \rr \otimes_{\zz} \emph{\gp}(M)$. The following statements are equivalent.
	\begin{enumerate}
		\item[(a)] The monoid $M$ is a primary monoid.
		\smallskip
		
		\item[(b)] The only face submonoids of $M$ are $\{0\}$ and $M$.
		\smallskip
		
		\item[(c)] The cone $\cone_V(M)^\bullet$ is an open subset of $V$.
	\end{enumerate}
\end{prop}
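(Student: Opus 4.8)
The plan is to establish the two equivalences (a) $\Leftrightarrow$ (b) and (b) $\Leftrightarrow$ (c) separately, reducing everything to facts about divisor-closed submonoids and the face lattice that are already available. Throughout I take $V = \rr \otimes_\zz \gp(M)$, so that Corollary~\ref{cor:rank equal dimension} gives $\dim \cone_V(M) = \rank(M) = \dim V$; that is, $C := \cone_V(M)$ is full-dimensional in $V$, and consequently $\relin(C) = \inter(C)$. I will also use throughout that, by Theorem~\ref{thm:characterization of divisor-closed submonoids in $C$}, the face submonoids of $M$ are exactly its divisor-closed submonoids, so condition (b) may be read as ``the only divisor-closed submonoids of $M$ are $\{0\}$ and $M$''.

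For (a) $\Rightarrow$ (b), let $N$ be a nonzero divisor-closed submonoid and pick $y \in N^\bullet$. Given any $x \in M^\bullet$, primarity produces $n$ with $x \mid_M ny$; since $ny \in N$ and $N$ is divisor-closed, $x \in N$, whence $N = M$. For the converse (b) $\Rightarrow$ (a), the key object is the divisor-closed submonoid generated by a single element: for $y \in M^\bullet$ set $S_y := \{z \in M : z \mid_M ny \text{ for some } n \in \nn\}$. A short check (using that $z_1 \mid_M n_1 y$ and $z_2 \mid_M n_2 y$ force $z_1 + z_2 \mid_M (n_1+n_2)y$) shows $S_y$ is a submonoid, and it is clearly divisor-closed and contains $y$. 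By (b), $S_y \in \{\{0\}, M\}$; as $y \neq 0$ we get $S_y = M$, so every $x \in M^\bullet$ divides some $ny$, which is precisely primarity.

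For (b) $\Leftrightarrow$ (c), I first translate (b) into a purely geometric statement. Since $M$ is of maximal rank in $V$, Lemma~\ref{lem:correspondence between faces and divisor-closed submonoids} gives an inclusion-preserving bijection $F \mapsto M \cap F$ between the faces of $C$ and the face submonoids of $M$, under which the zero face and $C$ correspond to $\{0\}$ and $M$ respectively (recall $\cone_V(M \cap F) = F$ by part~(3) of Proposition~\ref{prop:face submonoids do not depend on the embedding}). Thus (b) is equivalent to the assertion that the only faces of $C$ are $\{0\}$ and $C$. Now using that $C$ is the disjoint union of the relative interiors of its faces, together with $\relin(C) = \inter(C)$, the two-faces condition gives $C = \{0\} \sqcup \inter(C)$, i.e.\ $C^\bullet = \inter(C)$, which is open; this yields (b) $\Rightarrow$ (c). Conversely, if $C^\bullet$ is open then $C^\bullet \subseteq \inter(C)$, and since pointedness (Proposition~\ref{prop:cones of members of C are pointed}) keeps $0$ off the interior, $C^\bullet = \inter(C)$; any proper nonzero face $F$ would contribute a nonzero point lying in the relative boundary of $C$, contradicting $F^\bullet \subseteq C^\bullet = \inter(C)$. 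Hence $C$ has only the two trivial faces, giving (c) $\Rightarrow$ (b).

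I expect the main obstacle to be the bookkeeping in (b) $\Leftrightarrow$ (c): one must be careful that ``only two face submonoids'' really corresponds to ``only two faces'' (this needs the maximal-rank hypothesis so that the face-to-submonoid correspondence is a bijection), and that the passage between ``open'' and ``relative interior of the top face'' is legitimate, which is exactly where full-dimensionality ($\relin = \inter$) and pointedness ($0 \in \bd C$) are used. The algebraic equivalence (a) $\Leftrightarrow$ (b) is comparatively routine once the single-element divisor-closed submonoid $S_y$ is introduced.
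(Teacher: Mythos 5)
Your proposal is correct and follows essentially the same route as the paper: (a)~$\Leftrightarrow$~(b) via Theorem~\ref{thm:characterization of divisor-closed submonoids in $C$} and the characterization of primarity through divisor-closed submonoids, and (b)~$\Leftrightarrow$~(c) via the decomposition of the cone into the relative interiors of its faces together with the fact that proper faces lie in the boundary. The only difference is that where the paper simply cites \cite[Lemma~2.7.7]{GH06b} for the fact that $M$ is primary if and only if its only divisor-closed submonoids are $\{0\}$ and $M$, you reprove that fact directly via the divisor-closed submonoid $S_y$ of divisors of multiples of $y$ --- a correct, self-contained substitute that does not change the structure of the argument.
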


\begin{proof}
	By using Proposition~\ref{prop:conditions defining monoids in C} and Proposition~\ref{prop:combinatorial and geometric equivalence of cones}, we can assume that $M$ is a submonoid of $(\nn^d,+)$, where $d := \rank(M)$. In this case, $V = \rr^d$.
	\smallskip
	
	(a) $\Leftrightarrow$ (b): It follows from~\cite[Lemma~2.7.7]{GH06b} that $M$ is primary if and only if the only divisor-closed submonoids of $M$ are $\{0\}$ and $M$. This, along with Theorem~\ref{thm:characterization of divisor-closed submonoids in $C$}, implies that the conditions~(a) and~(b) are equivalent.
	\smallskip
	
	(b) $\Rightarrow$ (c): Take $x \in \cone_V(M)^\bullet$. Since $\cone_V(M)$ is the disjoint union of the relative interiors of all its faces, there exists a face $F$ of $\cone_V(M)$ such that $x \in \relin \, F$. As $x \neq 0$, the dimension of $F$ is at least $1$ and, therefore, $M \cap F$ is a nontrivial face submonoid of $M$. It follows now from part~(b) that $M \cap F = M$. As a consequence, $x$ belongs to the relative interior of $\cone_V(M)$. Hence $\cone_V(M)^\bullet$ is open.
	\smallskip
	
	(c) $\Rightarrow$ (b): Since every proper face of $\cone_V(M)$ is contained in the boundary of $\cone_V(M)$, the fact that $\cone_V(M)^\bullet$ is open implies that the only proper face of $\cone_V(M)$ is the origin, from which~(b) follows.
\end{proof}

\begin{remark}
	We want to emphasize that the equivalence (a) $\Leftrightarrow$ (c) in Proposition~\ref{prop:primary characterization} was first established by Geroldinger, Halter-Koch, and Lettl \cite[Theorem~2.4]{GHL95}. However, our approach here is quite different as we have obtained the same result based primarily on the combinatorial structure of the face lattice of $\cone_V(M)$.
\end{remark}

Primary monoids in $\mathcal{C}$ account for all primary submonoids of any (non-necessarily finite-rank) free (commutative) monoid, as the next proposition illustrates.

\begin{prop}
	Let $M$ be a primary submonoid of a free (commutative) monoid. Then~$M$ has finite rank, and $M$ can be embedded into $(\nn^d,+)$, where $d := \emph{\rank}(M)$.
\end{prop}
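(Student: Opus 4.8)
The plan is to show that primality forces a severe restriction on the supports of the elements of $M$, from which finite rank follows immediately, and then to invoke Proposition~\ref{prop:conditions defining monoids in C} to produce the embedding into $(\nn^d,+)$.

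First I would fix a free commutative monoid $F = \bigoplus_{x \in X} \nn x$ containing $M$ (with $X$ possibly infinite) and recall that every element $m \in F$ has a finite support $\supp(m) := \{x \in X \mid \mathsf{v}_x(m) > 0\}$. The crucial step is the claim that \emph{all} nonzero elements of $M$ share one and the same support. To see this, take arbitrary $x, y \in M^\bullet$. Since $M$ is primary, there is some $n \in \nn$ with $ny \in x + M$, say $ny = x + m$ for some $m \in M$; here necessarily $n \ge 1$, for otherwise $0 \in x + M$ would force the nonzero $x$ to be invertible, contradicting that $M$ is reduced. Comparing coordinates yields $n\,\mathsf{v}_z(y) = \mathsf{v}_z(x) + \mathsf{v}_z(m)$ for every $z \in X$, so $\mathsf{v}_z(x) > 0$ implies $\mathsf{v}_z(y) > 0$; that is, $\supp(x) \subseteq \supp(y)$. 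Exchanging the roles of $x$ and $y$ gives the reverse inclusion, and hence $\supp(x) = \supp(y)$.

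Next, denoting by $S$ this common (necessarily finite) support, every nonzero element of $M$ lies in $\bigoplus_{s \in S} \nn s$, so $M$ embeds into the free commutative monoid $\bigoplus_{s \in S} \nn s$ of finite rank $|S|$. Thus $M$ satisfies condition~(a) of Proposition~\ref{prop:conditions defining monoids in C}, whence $M \in \mathcal{C}$ and, in particular, $d := \rank(M) < \infty$. Finally, the implication (a)~$\Rightarrow$~(c) of Proposition~\ref{prop:conditions defining monoids in C} provides the desired embedding of $M$ into $(\nn^d,+)$, which completes the argument.

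I expect the only genuinely delicate point to be the support claim---specifically, justifying that $n \ge 1$ and reading off the coordinatewise inequality---while the passage from finite support to finite rank and the final embedding are immediate consequences of Proposition~\ref{prop:conditions defining monoids in C}. A minor subtlety worth flagging is that the symmetry argument uses primality applied to both orderings of the pair, so it is essential that the defining condition of a primary monoid quantifies over all ordered pairs $(x,y) \in M^\bullet \times M^\bullet$.
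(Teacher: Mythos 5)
Your proof is correct, and it takes a genuinely different route from the paper's. The paper argues by contradiction through the machinery of divisor-closed submonoids: assuming that infinitely many primes of the ambient free monoid divide elements of $M$, it fixes $x \in M^\bullet$, picks a prime $p$ dividing some element of $M$ but no power of $x$, and shows that $\{y \in M \mid p \text{ does not divide } y\}$ is a proper nonzero divisor-closed submonoid of $M$ --- contradicting the fact (from \cite[Lemma~2.7.7]{GH06b}, the same fact underlying Proposition~\ref{prop:primary characterization}) that primary monoids admit no such submonoids. You instead work directly from the definition of primary: applying it to both ordered pairs $(x,y)$ and $(y,x)$ and comparing coordinates gives $\supp(x) = \supp(y)$ for all $x,y \in M^\bullet$. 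Your argument is more elementary and self-contained (no contradiction, no appeal to the divisor-closed-submonoid characterization of primary monoids), and it establishes a strictly stronger intermediate fact: not merely that the set of primes occurring in $M$ is finite, but that every nonzero element of $M$ has the \emph{same} finite support. What the paper's route buys is thematic coherence --- divisor-closed submonoids are the central objects of the paper (Theorem~\ref{thm:characterization of divisor-closed submonoids in $C$}), so the argument reuses a lemma already in play. Both proofs finish identically, feeding the finite-rank free overmonoid into Proposition~\ref{prop:conditions defining monoids in C}(a)$\Rightarrow$(c); your handling of the subtleties there (ruling out $n = 0$ via reducedness, and getting $d = \rank(M)$ from the maximal-rank clause of condition~(c)) is also sound.
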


\begin{proof}
	Let $F_P$ be a free (commutative) monoid on an infinite set $P$ such that $M$ is a submonoid of $F_P$. For $s \in F_P$ and $S \subseteq F_P$, write
	\[
		\text{Spec}(s) := \big\{p \in P \mid p \text{ divides } s \text{ in } F_P \big\} \quad \text{ and } \quad \text{Spec}(S) := \bigcup_{s \in S} \text{Spec}(s).
	\]
	Suppose, by way of contradiction, that $\text{Spec}(M)$ contains infinitely many elements. Fix $x \in M^\bullet$. Since $F_P$ is free and, therefore, a UFM, the set $\text{Spec}(x)$ must be finite. Then we can take $p \in P$ such that $p \in \text{Spec}(M) \setminus \text{Spec}(x)$. Because $p$ is a prime element of~$F_P$, it is not hard to verify that the set $S := \{y \in M \mid p \text{ does not divide } y \text{ in } F_P \}$ is a divisor-closed submonoid of $M$. The fact that $p \notin \text{Spec}(x)$ implies that $S$ is a nonzero submonoid of $M$, and the fact that $p \in \text{Spec}(M)$ implies that $S$ is a proper submonoid of $M$. Therefore the monoid $M$ contains a proper nonzero divisor-closed submonoid, which contradicts that $M$ is primary. As a consequence, $\text{Spec}(M)$ must be finite, and so $M$ can be naturally embedded into the finite-rank free (commutative) monoid $\bigoplus_{i=1}^t \nn p_i$, where $p_1, \dots, p_t$ are the prime elements in $\text{Spec}(M)$. It follows now from Proposition~\ref{prop:conditions defining monoids in C} that $M$ can be embedded into $(\nn^r,+)$.
\end{proof}

\medskip
\subsection{Finitely Primary Monoids}
\label{subsec:finitely primary monoids}

Now we restrict our attention to a special subclass of primary monoids that has been key in the development of factorization theory, the class consisting of finitely primary monoids. The initial interest in this class originated from commutative algebra: the multiplicative monoid of a $1$-dimensional local Mori domain with nonempty conductor is finitely primary~\cite[Proposition~2.10.7.6]{GH06b}. Finitely primary monoids were introduced in~\cite{aG96} by Geroldinger.
\smallskip

The \emph{complete integral closure} of a monoid $M$, denoted by $\widehat{M}$, is defined as follows:
\[
	\widehat{M} := \big\{x \in \text{gp}(M) \mid \text{ there exists } y \in M \text{ such that } nx + y \in M \text{ for every } n \in \nn \big\}. 
\]
Clearly, $\widehat{M}$ is a submonoid of $\text{gp}(M)$ containing $M$, and so $\rank(\widehat{M}) = \rank(M)$. A monoid $M$ is called \emph{finitely primary} if there exist $d \in \nn$ and a UFM $F := \langle p_1, \dots, p_d \rangle$, where $p_1, \dots, p_d$ are pairwise distinct prime elements in $F$, such that the following conditions hold:
\begin{enumerate}
	\item the monoid $M$ is a submonoid of $F$,
	\smallskip
	
	\item the inclusion $M^\bullet \subseteq p_1 + \dots + p_d + F$ holds, and
	\smallskip
	
	\item the inclusion $\alpha(p_1 + \dots + p_d) + F \subseteq M$ holds for some $\alpha \in \nn^\bullet$.
\end{enumerate}
In this case, it follows from \cite[Theorem~2.9.2]{GH06b} that $\widehat{M} \cong (\nn^d,+)$. Then $\rk(M) = d$ and, moreover, any finitely primary monoid of rank $d$ belongs to $\mathcal{C}_d$. On the other hand, it also follows from~\cite[Theorem~2.9.2]{GH06b} that finitely primary monoids are primary. Therefore Proposition~\ref{prop:primary characterization} guarantees that for any finitely primary monoid $M$ the set $\cone_V(M)^\bullet$ is open in the finite-dimensional $\rr$-vector space $V := \rr \otimes_{\zz} \text{\gp}(M)$. This implies, in particular, that nontrivial finitely primary monoids cannot be finitely generated. As the following theorem reveals, the closures of their cones happen to be simplicial cones.

\begin{theorem} \label{thm:the closure of the cone of a finitely primary monoid is rational and simplicial}
	Let $M$ be a finitely primary monoid, and set $V = \rr \otimes_{\zz} \emph{\gp}(M)$. Then~$M$ belongs to $\mathcal{C}$, and the set $\overline{\cone_V(M)}$ is a rational simplicial cone in $V$.
\end{theorem}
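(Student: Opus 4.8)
The plan is to read off $\overline{\cone_V(M)}$ directly from the defining data of a finitely primary monoid. Write $F = \langle p_1, \dots, p_d \rangle$ for the ambient UFM and identify $F$ with $(\nn^d,+)$ via $p_i \mapsto e_i$, so that $\mathbf{1} := p_1 + \dots + p_d$ corresponds to $(1,\dots,1)$. I will show that, with respect to the basis $p_1, \dots, p_d$, the closed cone $\overline{\cone_V(M)}$ is precisely the nonnegative orthant $\rr_{\ge 0}^d$.

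First I would pin down the lattice, which is the one point genuinely requiring care, since ``rational'' must be read against $\gp(M)$. From $M \subseteq F$ we get $\gp(M) \subseteq \gp(F) = \zz^d$. Conversely, condition~(3) supplies $\alpha \mathbf{1} + F \subseteq M$ for some $\alpha \in \nn^\bullet$; for any $f, f' \in F$ both $\alpha\mathbf{1} + f$ and $\alpha\mathbf{1} + f'$ lie in $M$, so $f - f' \in \gp(M)$. As $f, f'$ range over $F = \nn^d$, these differences exhaust $\zz^d$, whence $\gp(M) = \gp(F) = \zz^d$. In particular $\rk(M) = d$, so $M \in \mathcal{C}_d$ by Proposition~\ref{prop:conditions defining monoids in C}, and $V = \rr \otimes_\zz \gp(M)$ is canonically $\rr^d$ equipped with the standard lattice $\zz^d$ and basis $p_1, \dots, p_d$.

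With the identification in hand, the two inclusions are routine. For the upper bound, $M \subseteq F = \nn^d$ gives $\cone_V(M) \subseteq \cone_V(\nn^d) = \rr_{\ge 0}^d$, and since the orthant is closed, $\overline{\cone_V(M)} \subseteq \rr_{\ge 0}^d$. For the lower bound I would invoke condition~(3) once more: for each $j \in \ldb 1,d \rdb$ and each $N \in \nn^\bullet$ the element $\alpha \mathbf{1} + N e_j$ lies in $M$, hence in $\cone_V(M)$; rescaling by $1/N$ keeps it in the cone, and $\frac{1}{N}(\alpha\mathbf{1} + N e_j) = \frac{\alpha}{N}\mathbf{1} + e_j \to e_j$ as $N \to \infty$, so $e_j \in \overline{\cone_V(M)}$. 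Using that the closure of a convex cone is again a convex cone, we conclude $\cone_V(e_1, \dots, e_d) = \rr_{\ge 0}^d \subseteq \overline{\cone_V(M)}$.

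Combining the inclusions gives $\overline{\cone_V(M)} = \rr_{\ge 0}^d = \cone_V(e_1, \dots, e_d)$, and since $e_1, \dots, e_d$ form a basis of the lattice $\gp(M)$ they are linearly independent integer vectors, so this cone is rational and simplicial. I do not anticipate a serious obstacle: the whole argument rests on the elementary facts that the closure of a cone is a cone and that the rescaled translates of the orthant furnished by condition~(3) converge to the coordinate rays, with the lattice computation $\gp(M) = \zz^d$ being the only step where real attention is needed.
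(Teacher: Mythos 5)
Your proof is correct, and while it shares the paper's overall strategy---showing that $\overline{\cone_V(M)}$ equals the simplicial cone spanned by the $d$ ``prime directions,'' with the upper inclusion trivial because that cone is closed---your execution of the two nontrivial steps is genuinely different and more elementary. The paper routes everything through the complete integral closure: it invokes the structure theorem \cite[Theorem~2.9.2]{GH06b} to get $\widehat{M} \cong (\nn^d,+)$ with primes $p_1,\dots,p_d$, and then proves the hard inclusion $C_p \subseteq \overline{\cone_V(M)}$ by a topological argument with conical open balls $B(p_j,\epsilon)$ around each ray $\rr_{\ge 0}p_j$: it finds a lattice point $y \in \widehat{M} \cap \inter C_p$ whose ray lies in the ball, scales by $\alpha$ to land in $M$, and concludes that every conical neighborhood of $p_j$ meets $\cone_V(M)$ in an open ray. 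You instead work directly from the defining data $F \cong (\nn^d,+)$ and replace that entire argument with the explicit sequence $\frac{1}{N}(\alpha\mathbf{1} + Ne_j) = \frac{\alpha}{N}\mathbf{1} + e_j \to e_j$, needing only that condition~(3) puts $\alpha\mathbf{1} + Ne_j$ in $M$ and that the closure of a convex cone is a convex cone. You also handle membership in $\mathcal{C}$ differently: your computation $\gp(M) = \zz^d$ from condition~(3) is done by hand, simultaneously pinning down the rank, the lattice against which ``rational'' is measured, and the identification $V = \rr^d$, whereas the paper gets $M \in \mathcal{C}$ from the cited structure theorem and then reduces via Propositions~\ref{prop:conditions defining monoids in C} and~\ref{prop:combinatorial and geometric equivalence of cones}. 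What the paper's route buys is intrinsicness---$\widehat{M}$ is canonical, so the cone $C_p$ is described without reference to a chosen ambient $F$---but your argument is shorter, self-contained, and avoids both the structure theorem and the conical-ball topology; note that, like the paper, you never need condition~(2) of the definition.
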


\begin{proof}
	Let $d$ be the rank of $M$. We have already observed that $M$ is in the class $\mathcal{C}$. Then by virtue of Proposition~\ref{prop:conditions defining monoids in C} and Proposition~\ref{prop:combinatorial and geometric equivalence of cones}, we can assume that $M$ is a submonoid of $(\nn^d,+)$ and, therefore, that $V = \rr^d$. In addition, there is no loss in assuming that $M \subseteq \widehat{M} \subseteq \nn^d$.
	
	Because $\widehat{M}\cong (\nn^d,+)$, one can take distinct prime elements $p_1, \dots, p_d$ of $\widehat{M}$ such that $\widehat{M} = \langle p_1, \dots, p_d \rangle = \bigoplus_{i=1}^d \nn p_i$. It follows from \cite[Theorem~2.9.2]{GH06b} that 
	\[
		M^\bullet \subseteq p_1 + \dots + p_d + \widehat{M} \ \text{ and } \ \alpha(p_1 + \dots + p_d) + \widehat{M} \subseteq M
	\]
	for some $\alpha \in \nn^\bullet$. Let $C_p$ be the cone in $V$ generated by $p_1,\dots, p_d$. Clearly, $C_p$ is a rational simplicial cone of dimension~$d$. We claim that $\overline{\cone_V(M)} = C_p$. Since
	\[
		M^\bullet \subseteq p_1 + \dots + p_d + \widehat{M} \subset \inter \, C_p,
	\]
	the inclusion $M \subseteq C_p$ holds. As a consequence, $\cone_V(M) \subseteq C_p$ and, as the cone $C_p$ is a closed set in $V$, the inclusion $\overline{\cone_V(M)} \subseteq C_p$ follows. Let us proceed to argue that $C_p \subseteq \overline{\cone_V(M)}$. To do so, fix $\epsilon > 0$ and fix also an index $j \in \ldb 1,d \rdb$. Let $L$ be the $1$-dimensional face of $C_p$ in the direction of the vector $p_j$, and consider the conical open ball with central axis $L$ given by
	\[
		B(p_j, \epsilon) := \bigg\{ w \in V \setminus \{0\} \ \bigg{|} \ \frac{\norm{w - \mathsf{p}_L(w)}}{ \norm{w}} < \epsilon \bigg\},
	\]
	where $\mathsf{p}_L \colon V \to \rr p_j$ is the linear projection of $V$ onto its subspace $\rr p_j$. It is clear that the set $\{0\} \cup \big( B(p_j, \epsilon) \cap \inter \, C_p \big)$ is a $d$-dimensional subcone of $C_p$ and, therefore, it must intersect $\widehat{M}$. Then one can take $y \in \widehat{M} \cap \inter \, C_p$ such that $\rr_{> 0} y \subset B(p_j, \epsilon)$. Because
	\[
		\alpha \big( \widehat{M} \cap \inter \, C_p \big) \subseteq \alpha(p_1 + \dots + p_d) + \widehat{M} \subseteq M,
	\]
	$\alpha y \in M$. As a result, $\rr_{> 0} y \subset \cone_V(M)$. As $\cone_V(M)$ and every open conical ball with central axis $L$ have an open ray in common, $p_j \in L \subseteq \overline{\cone_V(M)}$. Because the index~$j$ was arbitrarily taken, $p_j \in \overline{\cone_V(M)}$ for every $j \in \ldb 1,d \rdb$, and so $C_p \subseteq \overline{\cone_V(M)}$. Hence $\overline{\cone_V(M)}$ is a rational simplicial cone.
\end{proof}

With notation as in Theorem~\ref{thm:the closure of the cone of a finitely primary monoid is rational and simplicial}, the facts that $M$ is primary and $\overline{\cone_V(M)}$ is a rational simplicial cone do not guarantee that $M$ is finitely primary. The following example sheds some light upon this observation.

\begin{example}
	Consider the subset $M$ of $\nn^2$ defined by
	\[
		M := \{(0,0)\} \cup \big\{ (n,m) \in \nn^2 \mid n,m \in \nn^\bullet \text{ and } m \le 2^n \big\}.
	\]
	From the fact that $f(x) = 2^x$ is a convex function, one can readily verify that $M$ is a submonoid of $(\nn^2,+)$. Since $M$ contains $(n,1)$ for every $n \in \nn^\bullet$, the ray $\rr_{\ge 0} e_1$ is contained in $\overline{\cone_{\rr^2}(M)}$. On the other hand, the fact that $\{(n, 2^n) \mid n \in \nn^\bullet \} \subset M$, along with $\lim_{n \to \infty} 2^n/n = \infty$, guarantees that the ray $\rr_{\ge 0} e_2$ is contained in $\overline{\cone_{\rr^2}(M)}$. Thus, $\overline{\cone_{\rr^2}(M)}$ is the closure of the first quadrant and so
	\[
		\cone_{\rr^2}(M) = \big\{ (0,0)\} \cup \{(x,y) \in \rr^2 \mid x,y > 0 \big\} = \{(0,0)\} \cup \rr^2_{> 0}.
	\]
	Because $\cone_{\rr^2}(M)^\bullet$ is an open set in $\rr^2$, it follows from Proposition~\ref{prop:primary characterization} that $M$ is a primary monoid. On the other hand, the equality $\overline{\cone_{\rr^2}(M)} = \rr^2_{\ge 0}$ holds, and so $\overline{\cone_{\rr^2}(M)}$ is a rational simplicial cone.
	
	To argue that $M$ is not finitely primary, it suffices to verify that $\widehat{M} \not\cong (\nn^2,+)$. To do so, fix $m \in \nn$, and then take $N \in \nn$ large enough so that $nm \le 2^n$ for every $n \ge N$. Note that $y := (N,Nm)$ belongs to $M$. Moreover,
	\[
		n(1,m) + y = (n+N, (n+N)m) \in M
	\]
	for every $n \in \nn$. Therefore $(1,m) \in \widehat{M}$ for every $m \in \nn$. On the other hand, for any $m \in \nn^\bullet$ and $(a,b) \in M^\bullet$,
	\[
		2^a(0,m) + (a,b) = (a, 2^am + b) \notin M.
	\]
	Hence $(n,m) \in \widehat{M}^\bullet$ implies that $n > 0$. As a result, $\widehat{M} = \{(n,m) \in \nn^2 \mid n > 0\}$. Since $\mathcal{A}(\widehat{M}) = \{(1,n) \mid n \in \nn\}$ contains infinitely many elements, $\widehat{M} \not\cong (\nn^2,+)$. Consequently,~$M$ cannot be finitely primary.
\end{example}

\smallskip
\subsection{Finitary Monoids}

Let $M$ be a monoid. Recall that $M$ is finitary if it is a BFM and there exist a finite subset $S$ of $M$ and a positive integer $n$ satisfying that $n M^\bullet \subseteq S + M$. Geroldinger et al. introduced and studied the class of finitary monoids in~\cite{GHHK03} motivated by the fact that monoids in this class naturally show in commutative ring theory: the multiplicative monoid of a Noetherian domain $R$ is finitary if and only if~$R$ is $1$-dimensional and semilocal~\cite[Proposition~4.14]{GHHK03}. In addition, finitary monoids conveniently capture certain aspects of the arithmetic structure of more sophisticated monoids, including $v$-Noetherian $G$-monoids~\cite{aG93} and congruence monoids~\cite{GH04}. Every finitely generated monoid is finitary. In particular, affine monoids are finitary.

The monoid $M$ is said to be \emph{weakly finitary} if there exist a finite subset $S$ of $M$ and $n \in \nn^\bullet$ such that $nx \in S + M$ for all $x \in M^\bullet$. Clearly, every finitary monoid is weakly finitary. The face submonoids of a monoid in $\mathcal{C}$ inherit the condition of being (weakly) finitary.

\begin{prop} \label{prop:face-submonoids inherit finitariness}
	Let $M$ be a monoid in $\mathcal{C}$. Then $M$ is finitary (resp., weakly finitary) if and only if each face submonoid of $M$ is finitary (resp., weakly finitary).
\end{prop}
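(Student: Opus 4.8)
The plan is to treat the finitary and the weakly finitary cases together, exploiting the fact that the only distinction between them is the bounded-factorization requirement in the definition of finitariness, and that this requirement is automatic in the present setting: every face submonoid $N$ of $M$ is again a submonoid of a finite-rank free commutative monoid, hence a member of $\mathcal{C}$, so Proposition~\ref{prop:monoids in C are FF monoids} makes $N$ an FFM and therefore a BFM. Consequently, for a submonoid of $M$ the finitary and weakly finitary conditions amount to the same combinatorial requirement, namely the existence of a finite set $S$ and an integer $n \in \nn^\bullet$ with $nN^\bullet \subseteq S + N$, and it is this requirement that I would track throughout.

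One implication is immediate. The monoid $M$ is trivially divisor-closed in itself, so by Theorem~\ref{thm:characterization of divisor-closed submonoids in $C$} it is a face submonoid of itself; hence if every face submonoid of $M$ is finitary (resp.\ weakly finitary), then so is $M$.

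For the converse, the key move is to replace ``face submonoid'' by ``divisor-closed submonoid'' via Theorem~\ref{thm:characterization of divisor-closed submonoids in $C$}, after which the descent is forced by divisor-closedness. Assume $M$ is weakly finitary, witnessed by a finite $S \subseteq M$ and $n \in \nn^\bullet$ with $nx \in S + M$ for every $x \in M^\bullet$, and let $N$ be a divisor-closed submonoid of $M$. For $y \in N^\bullet$, write $ny = s + m$ with $s \in S$ and $m \in M$. Since $ny \in N$ and $N$ is divisor-closed, the relations $s \mid_M ny$ and $m \mid_M ny$ force both $s$ and $m$ into $N$; thus $ny \in (S \cap N) + N$. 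Setting $S' := S \cap N$, a finite subset of $N$, gives $nN^\bullet \subseteq S' + N$, so $N$ is weakly finitary with the same $n$. In the finitary case the same computation produces $nN^\bullet \subseteq S' + N$, and since $N$ is a BFM by the first paragraph, $N$ is finitary.

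I do not expect a genuine obstacle here: the whole argument hinges on the single observation that a divisor-closed submonoid absorbs both summands of any factorization $ny = s + m$ of an element $ny$ it contains, which is precisely what Theorem~\ref{thm:characterization of divisor-closed submonoids in $C$} lets me exploit once face submonoids are reinterpreted as divisor-closed submonoids.
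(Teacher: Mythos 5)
Your core mechanism --- divisor-closedness of $N$ absorbs both summands of any decomposition $ny = s + m$ of an element of $N$ --- is exactly the mechanism of the paper's proof (the paper works with $N = M \cap F$ and quotes divisor-closedness of face submonoids directly, while you route it through Theorem~\ref{thm:characterization of divisor-closed submonoids in $C$}; either is fine), and your weakly finitary case is correct. However, there is a genuine error in how you unify the two notions. In this paper, \emph{finitary} does not mean ``weakly finitary plus BFM'': the finitary condition $nM^\bullet \subseteq S + M$ refers to the $n$-fold \emph{sumset} $M^\bullet + \cdots + M^\bullet$, i.e., every sum of $n$ possibly distinct nonzero elements of $M$ must lie in $S + M$, whereas the weakly finitary condition $nx \in S + M$ concerns only the \emph{dilate} of a single element. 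This is visible in the paper's own proofs: both in the proof of Proposition~\ref{prop:face-submonoids inherit finitariness} and in that of Theorem~\ref{thm:finitary GAMs}, one takes possibly distinct (or, as the paper says, ``possibly repeated'') elements $x_1, \dots, x_n \in M^\bullet$ and works with $\sum_{i=1}^n x_i$. The sumset condition implies the dilate condition (take all $x_i$ equal), but there is no reverse implication to appeal to, and none is proved in the paper; so your premise that, modulo the automatic BFM property, ``the finitary and weakly finitary conditions amount to the same combinatorial requirement'' is false, and the two cases cannot be collapsed into one.

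Concretely, the step that fails is the final sentence of your third paragraph. Your computation establishes $ny \in (S \cap N) + N$ for each single $y \in N^\bullet$, which is weak finitariness of $N$; asserting that ``the same computation'' then yields finitariness of $N$ is unjustified, since finitariness requires $y_1 + \cdots + y_n \in (S \cap N) + N$ for \emph{all} choices of $y_1, \dots, y_n \in N^\bullet$ (your writing both conclusions as ``$nN^\bullet \subseteq S' + N$'' masks exactly this distinction). The repair is immediate and is precisely what the paper does: given $y_1, \dots, y_n \in N^\bullet$, the sum $\sum_{i=1}^n y_i$ lies in $N$ (as $N$ is a submonoid) and in $nM^\bullet \subseteq S + M$ in the sumset sense; writing $\sum_{i=1}^n y_i = s + m$ with $s \in S$ and $m \in M$, divisor-closedness of $N$ forces $s, m \in N$, whence $\sum_{i=1}^n y_i \in (S \cap N) + N$. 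With that substitution your argument becomes a correct proof, and your explicit appeal to Proposition~\ref{prop:monoids in C are FF monoids} to secure the BFM requirement on $N$ is sound --- indeed that point is left implicit in the paper.
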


\begin{proof}
	We will prove only the finitary version of the proposition as the weakly finitary version follows similarly. By Proposition~\ref{prop:conditions defining monoids in C} we can assume that $M$ is a submonoid of $(\nn^d,+)$, where $d$ is the rank of $M$.
	
	Suppose that $M$ is finitary. Take~$F$ to be a face of $\cone_{\rr^d}(M)$, and consider the face submonoid $N := M \cap F$. Since $M$ is finitary, there exist $n \in \nn$ and a finite subset $S$ of~$M$ such that $n M^\bullet \subseteq S + M$. We claim that $n N^\bullet \subseteq S_F + N$, where $S_F := S \cap F$. Take $x_1, \dots, x_n \in N^\bullet = M^\bullet \cap F$. Because $n (M^\bullet \cap F) \subseteq n M^\bullet \subseteq S + M$, there exist $s \in S$ and $y \in M$ such that $\sum_{i=1}^n x_i = s + y$. Since $M \cap F$ is a divisor-closed submonoid of $M$, we find that $s,y \in F$. Therefore $s \in S_F$ and $y \in N$; this implies that $\sum_{i=1}^n x_i \in S_F + N$. Hence $N$ is a finitary monoid. The reverse implication follows trivially as $\cone_{\rr^d}(M)$ is a face of itself.
\end{proof}

Our next goal is to give a sufficient geometric condition for a monoid in $\mathcal{C}$ to be finitary. First, let us recall the notion of triangulation. Let $V$ be a finite-dimensional vector space over an ordered field $\ff$. A \emph{conical polyhedral complex} $\pp$ in $V$ is a collection of polyhedral cones in $V$ satisfying the following conditions:
\begin{enumerate}
	\item every face of a polyhedron in $\pp$ is also in $\pp$, and
	\smallskip
	
	\item the intersection of any two polyhedral cones $C_1$ and $C_2$ in $\pp$ is a face of both $C_1$ and~$C_2$.
\end{enumerate}
Clearly, the underlying set of the face lattice of a given polyhedral cone is a conical polyhedral complex. For a conical polyhedral complex $\pp$ in $V$, we set $|\pp| := \bigcup_{C \in \pp} \, C$. Let $\pp$ and $\pp'$ be two conical polyhedral complexes. The complex $\pp'$ is said to be a \emph{polyhedral subdivision} of $\pp$ provided that $|\pp| = |\pp'|$ and each face of $\pp$ is the union of faces of $\pp'$. A polyhedral subdivision $\mathsf{T}$ of~$\pp$ is called a \emph{triangulation} of $\pp$ provided that $\mathsf{T}$ consists of simplicial cones. Every conical polyhedral complex has the following special triangulations.

\begin{theorem}\cite[Theorem~1.54]{BG09} \label{thm:exsitence of triangulations}
	Let $\pp$ be a conical polyhedral complex in $\rr^d$ for some $d \in \nn^\bullet$, and let $S \subset |\pp|$ be a finite set of nonzero vectors such that $S \cap C$ generates $C$ for each $C \in \pp$. Then there exists a triangulation $\mathsf{T}$ of $\pp$ such that $\{\rr_{\ge 0}v \mid v \in S\}$ is the set  of $1$-dimensional faces of $\mathsf{T}$.
\end{theorem}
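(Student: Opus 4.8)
The plan is to build $\mathsf{T}$ as a \emph{pulling triangulation} determined by a fixed ordering of $S$, refining $\pp$ in a way that is automatically consistent on shared faces. First I would fix a total order $v_1, \dots, v_n$ on $S$ and define a chain of conical polyhedral complexes $\pp = \pp_0, \pp_1, \dots, \pp_n = \mathsf{T}$, where $\pp_i$ is obtained from $\pp_{i-1}$ by \emph{pulling} $v_i$: each cone $C$ of $\pp_{i-1}$ containing $v_i$ is subdivided by coning $v_i$ over those cells of the boundary of $C$ (in $\pp_{i-1}$) that do not contain $v_i$, while the cones of $\pp_{i-1}$ not containing $v_i$ are left untouched. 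Because $S \cap C$ generates each $C \in \pp$, this process has enough vectors to eventually break every cone into simplicial pieces, and every cell produced is spanned by a subset of $S$ (the extreme rays of the original cones lie in $S$ by the generating hypothesis, and every apex introduced belongs to $S$).

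I would then establish the following in order. \emph{(Each $\pp_i$ is a complex refining $\pp_{i-1}$.)} Pulling a single point sends a face-to-face complex to a face-to-face complex with the same support and subdivides each cell into faces of the new cells; this is the standard local analysis of the star of $v_i$, and it gives $|\pp_i| = |\pp|$ with each cone of $\pp$ remaining a union of cells of $\pp_i$. \emph{($\pp_n$ is simplicial.)} Since each round makes the pulled vector an apex and the rays available in each cone $C$ are exactly $S \cap C$, which generates $C$, I would check by induction on $\dim C$ that after all $n$ rounds every cell is a simplicial cone, so $\mathsf{T} := \pp_n$ is a triangulation of $\pp$. \emph{(All of $S$ is used.)} I would verify that once $v_i$ has been pulled it is a ray of every cell containing it, and that later pulls of $v_j$ with $j > i$ never destroy this; hence each $v_i$ is a $1$-face of $\mathsf{T}$. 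Combined with the fact that every cell of $\mathsf{T}$ is spanned by elements of $S$, this yields that $\{\rr_{\ge 0} v \mid v \in S\}$ is precisely the set of $1$-dimensional faces of $\mathsf{T}$.

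The main obstacle is the pair of bookkeeping claims in the last two steps: that pulling preserves the face-to-face property globally across $\pp$ (so the subdivisions of two cones meeting in a common face — a face of both by the second complex axiom — agree on that face), and that every pulled vector persists as a vertex under all subsequent pulls, including vectors of $S$ lying in the relative interior of a cone, which are not extreme rays of any cell of $\pp$. Both reduce to a careful induction on $\dim C$ driven by the least-to-greatest processing order, but they are the technical heart of the argument. As an alternative valid for the pointed cones relevant here, I would slice $\pp$ by a hyperplane meeting each of its cones in a bounded polytope, scale every $v \in S$ to the point where its ray crosses the slice, and triangulate the resulting polytopal complex cell by cell via regular subdivisions coming from strictly convex liftings chosen to agree on shared faces — strict convexity forces every scaled point onto the lower envelope and hence into the vertex set — before coning the triangulation back from the origin; this reproves existence while sidestepping the pulling bookkeeping.
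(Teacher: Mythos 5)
The paper does not prove this statement at all: it is imported verbatim from \cite[Theorem~1.54]{BG09}, so there is no internal argument to compare yours against, and your proposal has to be judged on its own merits. Your primary route is sound, and it is in fact the standard proof of the cited result: your ``pulling'' operation is exactly stellar subdivision at $v_i$, and iterating it over an ordering of $S$ is how such triangulations are usually constructed. The checkpoints you list all go through for the pointed cones this paper works with. Coverage of a cell $C$ by the cones $\cone(v_i,F)$, $F$ a face of $C$ with $v_i \notin F$, uses pointedness (the ray $x - t v_i$ must exit $C$, and it does so through such a face). Face-to-face compatibility holds because the pulling subdivision of a cell $C$ restricts, on any face $G$ of $C$ containing $v_i$, to the pulling subdivision of $G$ itself: if $\lambda v_i + f \in G$ with $f \in C$, the face property forces $f \in G$, so $\cone(v_i,F) \cap G = \cone(v_i, F \cap G)$. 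Persistence of pulled rays holds because refinements only refine, and a ray written as a union of cones must be one of those cones. Finally, simpliciality and the fact that no rays outside $\{\rr_{\ge 0}v \mid v \in S\}$ appear both rest on the identity $\lin(F) \cap C = F$ for a face $F$ of a cone $C$ (so $v_i \notin F$ implies $v_i \notin \lin(F)$), plus the observation that any generating set of a pointed cone meets each of its extreme rays. So the bookkeeping you flagged as the technical heart is genuinely routine, and your main argument is correct.

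Your fallback argument, however, has a real gap in the stated generality. A conical polyhedral complex need not admit a single hyperplane meeting every cell in a bounded polytope: although each cell is pointed, the support $|\pp|$ need not be contained in any pointed cone. For the complete fan in $\rr^2$ whose maximal cells are the four closed quadrants, $|\pp| = \rr^2$, and no affine line crosses all four quadrants in bounded segments, so there is nothing to slice along. The slicing reduction therefore only proves the theorem for complexes whose support lies in a pointed cone. That happens to cover the one place this paper uses the theorem --- in Theorem~\ref{thm:finitary GAMs} the complex is the face lattice of the single pointed cone $\cone_V(M)$ --- but it does not prove the statement as written. Keep the pulling argument as the proof and either drop the slicing alternative or state explicitly that it requires $|\pp|$ to be contained in a pointed cone.
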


We are in a position now to offer a sufficient geometric condition for a monoid in $\mathcal{C}$ to be finitary.

\begin{theorem} \label{thm:finitary GAMs}
	Let $M$ be a monoid in $\mathcal{C}$, and set $V = \rr \otimes_{\zz} \emph{\gp}(M)$. If $\cone_V(M)$ is polyhedral, then $M$ is finitary.
\end{theorem}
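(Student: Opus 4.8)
The plan is to first dispose of the bounded-factorization requirement and then to reduce the finitary covering condition to a finite computation on a triangulation of $\cone_V(M)$ into rational simplicial cones whose extreme rays are spanned by elements of $M$ itself. Since every monoid in $\mathcal{C}$ is an FFM by Proposition~\ref{prop:monoids in C are FF monoids}, it is in particular a BFM, so it remains only to produce a finite subset $S \subseteq M$ and an integer $n \in \nn^\bullet$ with $n M^\bullet \subseteq S + M$. Using Proposition~\ref{prop:conditions defining monoids in C} together with Proposition~\ref{prop:combinatorial and geometric equivalence of cones}, I would assume without loss of generality that $M \subseteq \nn^d$ and $V = \rr^d$, where $d = \rank(M)$; then $\cone_V(M)$ is a $d$-dimensional positive cone, and, being polyhedral by hypothesis, Theorem~\ref{thm:a characterization of cones generated by monoids in C} guarantees that each of its one-dimensional faces is a rational ray, so that $\cone_V(M)$ is a rational polyhedral cone.

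Next I would build a triangulation adapted to $M$. For each face $F$ of $\cone_V(M)$, part~(3) of Proposition~\ref{prop:face submonoids do not depend on the embedding} gives $\cone_V(M \cap F) = F$, so (as $F$ is polyhedral) one can choose a finite subset $S_F \subseteq (M \cap F)^\bullet$ generating $F$ as a cone. Setting $S_0 := \bigcup_F S_F$, where $F$ ranges over the finitely many faces of $\cone_V(M)$, yields a finite set of nonzero integer vectors in $M$ such that $S_0 \cap C$ generates $C$ for every $C$ in the face complex $\pp$ of $\cone_V(M)$. Theorem~\ref{thm:exsitence of triangulations} then produces a triangulation $\mathsf{T}$ of $\pp$ whose one-dimensional faces are exactly the rays $\rr_{\ge 0} v$ with $v \in S_0$; in particular, the maximal cones $\sigma_1, \dots, \sigma_t$ of $\mathsf{T}$ are $d$-dimensional rational simplicial cones, each of the form $\sigma_j = \cone_V(v^{(j)}_1, \dots, v^{(j)}_d)$ with linearly independent generators $v^{(j)}_i \in S_0 \subseteq M$, and together they cover $\cone_V(M)$.

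Finally I would set $D_j := |\det(v^{(j)}_1, \dots, v^{(j)}_d)| \in \nn^\bullet$, take $n := \mathrm{lcm}(D_1, \dots, D_t)$, and let $S := S_0$. Given $x \in M^\bullet$, one has $x \in \sigma_j$ for some $j$, so $x = \sum_{i=1}^d c_i v^{(j)}_i$ with uniquely determined $c_i \ge 0$; since the generators and $x$ have integer coordinates, Cramer's rule shows $D_j c_i \in \zz$, whence $n c_i \in \nn$ because $D_j \mid n$. Therefore $n x = \sum_{i=1}^d (n c_i) v^{(j)}_i$ lies in the free submonoid $\langle v^{(j)}_1, \dots, v^{(j)}_d \rangle \subseteq M$, and, as $nx \neq 0$, some coefficient $n c_{i_0}$ is positive; writing $s := v^{(j)}_{i_0} \in S$ and $y := nx - s$ gives $nx = s + y$ with $y \in M$, so $n M^\bullet \subseteq S + M$. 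I expect the main obstacle to be the geometric bookkeeping that makes the triangulation usable, namely arranging that the simplicial cones have their extreme rays spanned by honest elements of $M$ rather than by arbitrary rational points of $\cone_V(M)$; this is precisely what the face-submonoid identity $\cone_V(M \cap F) = F$ buys us, and it is what allows the Cramer's-rule scaling to land back inside $M$. The polyhedrality hypothesis is essential here, since it is what keeps $\mathsf{T}$ finite and supplies the single exponent $n$ that works uniformly over all of $M^\bullet$.
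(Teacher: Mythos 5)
Your reduction to a triangulation whose one-dimensional faces are spanned by elements of $M$ matches the paper's setup, and invoking Proposition~\ref{prop:monoids in C are FF monoids} to dispose of the BFM requirement is fine. But there is a genuine gap in the final step: you have proved the wrong condition. In this paper (following Geroldinger et al.), the finitary condition $nM^\bullet \subseteq S + M$ refers to the $n$-fold \emph{sumset} $M^\bullet + \cdots + M^\bullet$, not to the dilation $\{nx \mid x \in M^\bullet\}$. This is visible in the paper's proofs of Proposition~\ref{prop:face-submonoids inherit finitariness} and of Theorem~\ref{thm:finitary GAMs} itself, both of which begin by taking $n$ possibly distinct elements $x_1, \dots, x_n \in M^\bullet$ and showing $\sum_{i=1}^n x_i \in S + M$. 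What you establish, namely $nx \in S + M$ for every single $x \in M^\bullet$, is precisely the \emph{weakly finitary} condition defined in the same subsection, which is a strictly weaker requirement (the paper records that finitary implies weakly finitary, never the converse; if the two coincided there would be no reason to introduce both notions).

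The distinction is not cosmetic, and your argument does not extend as written. The Cramer's-rule step works for $nx$ exactly because a single scaling clears all denominators simultaneously, so $nx$ lands in the free submonoid $\langle v^{(j)}_1, \dots, v^{(j)}_d \rangle \subseteq M$ and subtracting one generator stays inside that submonoid. For a sum $x_1 + \cdots + x_n$ of $n$ \emph{different} elements, possibly lying in different simplicial cones and carrying incompatible fractional parts, no scaling is applied to any individual summand, and nothing forces the sum minus a generator to lie in $M$: membership in $\sigma_j \cap \zz^d$ does not suffice, since $M$ need not contain every lattice point of its cone. This is exactly the difficulty the paper's proof is built to overcome: each $x_i$ is written as a lattice point $z_i$ of a half-open fundamental parallelepiped $\Pi_T$ plus an element of $\sum_{i=1}^d \nn v_{t_i} \subseteq M$, and then a double pigeonhole argument (with $n = m|\mathsf{T}|$ and $m \ge N_T |\Pi_T \cap \zz^d|$ for every $T$) produces $N_{T_0}$ indices sharing the \emph{same} fractional part $z$, so that $N_{T_0} z$ is a nonnegative integer combination of the generators and the full sum decomposes as an element of $S + M$. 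Some argument of this kind, handling arbitrary $n$-term sums rather than multiples of a single element, is required to close your proof.
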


\begin{proof}
	Let $d$ be the rank of $M$. Based on Proposition~\ref{prop:conditions defining monoids in C} and Proposition~\ref{prop:combinatorial and geometric equivalence of cones}, one can assume that $M \subseteq \nn^d$. In this case, $V = \rr^d$. Since $\cone_V(M)$ is polyhedral, it follows by Farkas-Minkowski-Weyl Theorem that $\cone_V(M)$ is the conic hull of a finite set of vectors. As the vectors in such a generating set are nonnegative rational linear combinations of vectors in $M$, there exists $S = \{v_1, \dots, v_k\} \subset M$ with $k \ge d$ such that $\cone_V(M) = \cone_V(S)$. By Theorem~\ref{thm:exsitence of triangulations}, there exists a triangulation $\mathsf{T}$ of the face lattice of $\cone_V(M)$ whose set of $1$-dimensional faces is $\{\rr_{\ge 0} v_i \mid i \in \ldb 1,k \rdb \}$. Then for any $T \in \mathsf{T}$ there are unique indices $t_1, \dots, t_d$ satisfying that
	\[
		1 \le t_1 < \dots < t_d \le k \quad \text{ and } \quad T = \cone_V(v_{t_1}, \dots, v_{t_d}),
	\]
	and we can use this to assign to $T$ the parallelepiped
	\[
		\Pi_T := \bigg\{ \sum_{i=1}^d \alpha_1 v_{t_i} \ \bigg{|} \ 0 \le \alpha_i < 1 \ \text{for every} \ i \in \ldb 1,d \rdb \bigg\}.
	\]
	It is clear that
	\[
		|\Pi_T \cap \zz^d| < \infty \quad \text{ and } \quad \Pi_T \cap \zz^d \subset \sum_{i=1}^d \qq_{\ge 0} v_{t_i}.
	\]
	Then we can choose $N_T \in \nn$ sufficiently large so that $N_T v \in \sum_{i=1}^d \nn v_{t_i}$ for every $v \in \Pi_T \cap \zz^d$. Now take $m := \max \{N_T \, |\Pi_T \cap \zz^d| : T \in \mathsf{T}\}$ and set $n := m \, |\mathcal{T}|$. In order to show that $M$ is finitary, it suffices to verify that $nM^\bullet \subseteq S + M$.
	
	Take (possibly repeated) elements $x_1, \dots, x_n \in M^\bullet$. For every $x \in \{x_1,\dots, x_n\}$, there exists $T \in \mathsf{T}$ with $x \in T$. Let $T = \cone_V(v_{t_1}, \dots, v_{t_d})$ for $t_1 < \dots < t_d$ be a simplicial cone in~$\mathsf{T}$. Observe that we can naturally partition $T$ into (translated) copies of the parallelepiped $\Pi_T$, that is, $T$ equals the disjoint union of the sets $v + \Pi_T$ for every $v \in \nn \sum_{i=1}^d v_{t_i}$. As a result, there exist $z \in \Pi_T \cap \zz^d$ and coefficients $\alpha_1, \dots, \alpha_d \in \nn$ satisfying that
	\begin{equation} \label{eq:parallelepiped decomposition}
		x = z + \sum_{i=1}^d \alpha_i v_{t_i}.
	\end{equation}
	Hence for $i \in \ldb 1,n \rdb$, we can write $x_i = z_i + m_i$ for some $z_i \in \bigcup_{T \in \mathsf{T}} \Pi_T \cap \zz^d$ and $m_i \in M$. Since $n = m |\mathsf{T}|$, there exists $T_0 \in \mathsf{T}$ such that
	\[
		|\{i \in \ldb 1,n \rdb \mid z_i \in \Pi_{T_0} \cap \zz^d\}| \ge m.
	\]
	Consider now the equivalence relation on the set of indices $\{i \in \ldb 1,n \rdb \mid z_i \in T_0\}$ defined by $i \sim j$ whenever $z_i = z_j$. The fact that $m \ge N_{T_0} \, |\Pi_{T_0} \cap \zz^d|$ guarantees the existence of a class~$I$ determined by the relation $\sim$ and containing at least $N_{T_0}$ distinct indices. Take $I_0 \subseteq I$ such that $|I_0| = N_{T_0}$. Setting $z := z_i$ for some $i \in I_0$, we see that
	\[
		\sum_{i \in I_0} z_i = N_{T_0}z \in \nn v_1 + \dots + \nn v_n \in S + M
	\]
	and, therefore, $\sum_{i \in I_0} z_i = v + m$ for some $v \in S$ and $m \in M$. As a result, one can set $m' := \sum_{i=1}^n x_i - \sum_{i \in I_0} x_i \in M$ to obtain that
	\[
		\sum_{i=1}^n x_i = \bigg( \sum_{i \in I_0} x_i \bigg) + m' = \sum_{i \in I_0} z_i + m' + \sum_{i \in I_0} m_i = v + \bigg(m + m' + \sum_{i \in I_0} m_i \bigg) \in S + M.
	\]
	Since the elements $x_1, \dots, x_n$ were arbitrarily taken in $M^\bullet$, the inclusion $n M^\bullet \subseteq S + M$ holds. Hence the monoid $M$ is finitary.
\end{proof}

According to the characterization of cones generated by monoids in $\mathcal{C}$ we have provided in Theorem~\ref{thm:a characterization of cones generated by monoids in C}, every $d$-dimensional positive cone $C$ of $\rr^d$ with $C^\bullet$ open can be generated by a monoid in $\mathcal{C}$. Indeed, any such a cone can be generated by a finitary monoid in $\mathcal{C}$.

\begin{prop} \label{prop:a characterization of cones generated by monoids in C}
	For $d \in \nn$, let $C$ be a positive cone in $\rr^d$. If $C^\bullet$ is open in $\rr^d$, then~$C$ can be generated by a finitary monoid in $\mathcal{C}$.
\end{prop}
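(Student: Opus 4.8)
The plan is to exhibit an explicit finitary monoid generating $C$. Since $C$ is positive we have $C \subseteq \rr^d_{\ge 0}$, so its closure $K := \overline{C}$ is a closed pointed cone contained in the first orthant; assuming $C \neq \{0\}$ (the trivial case being immediate), the hypothesis that $C^\bullet$ is open forces $K$ to be $d$-dimensional with $C^\bullet = \inter C = \inter K$ and hence $C = \{0\} \cup \inter K$. The tempting first move is to take $M := C \cap \nn^d$, which already generates $C$ by Theorem~\ref{thm:a characterization of cones generated by monoids in C} (whose reverse implication applies vacuously, as $C^\bullet$ open means the only proper face of $C$ is the origin, so there are no $1$-dimensional faces). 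The main obstacle, however, is that this $M$ need \emph{not} be finitary: when $\bd K$ is irrational, $M$ has atoms lying arbitrarily close in direction to the boundary rays, and no finite subset $S$ can absorb the resulting sums. So the crux is to replace $C \cap \nn^d$ by a monoid whose nonzero elements are uniformly bounded away from $\bd K$ in the cone order, in the spirit of finitely primary monoids, while keeping the conic hull equal to $C$.

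Concretely, I would fix an interior lattice point $w \in \inter K \cap \nn^d$ (one exists since $\inter K$ is open and nonempty, hence meets $\qq^d$, after which we clear denominators) and set
\[
	M := \{0\} \cup \big( (w + K) \cap \nn^d \big).
\]
First I would check that $M$ is a submonoid of $(\nn^d,+)$, and therefore a member of $\mathcal{C}$ by Proposition~\ref{prop:conditions defining monoids in C}. The key elementary fact is that $\inter K + K \subseteq \inter K$ for the convex cone $K$, which yields both $w + K \subseteq \inter K$ and the closure of $M^\bullet$ under addition, since for $k_1, k_2 \in K \cap \nn^d$ one has $(w + k_1) + (w + k_2) = w + (w + k_1 + k_2) \in (w + K) \cap \nn^d$.

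Next I would verify that $\cone_{\rr^d}(M) = C$. The inclusion $\subseteq$ is immediate from $M^\bullet \subseteq w + K \subseteq \inter K \subseteq C$ together with the convexity of the cone $C$. For $\supseteq$, given $x \in C^\bullet = \inter K$, I would use Lemma~\ref{lem:polyhedral cone containing an interior ray of a convex cone} to enclose the ray through $x$ in a rational simplicial cone $\cone(v_1, \dots, v_d) \subseteq \{0\} \cup \inter K$ with integer generators $v_i \in \inter K$. Since each $v_i$ is interior, $N v_i - w \in \inter K$ for all large $N$, and being an integer vector lying in $K \subseteq \rr^d_{\ge 0}$ it belongs to $K \cap \nn^d$; hence $N v_i \in (w + K) \cap \nn^d = M^\bullet$ and $v_i \in \cone_{\rr^d}(M)$. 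As $x \in \cone(v_1, \dots, v_d)$, this gives $x \in \cone_{\rr^d}(M)$.

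Finally, finitariness is the payoff of the construction and should be essentially free. Every nonzero element of $M$ lies in $w + K$, so for any $x_1, x_2 \in M^\bullet$, writing $x_i = w + k_i$ with $k_i \in K \cap \nn^d$, we get
\[
	x_1 + x_2 = w + (w + k_1 + k_2) \in w + M^\bullet \subseteq \{w\} + M.
\]
Thus $2 M^\bullet \subseteq \{w\} + M$ (in both the sumset sense employed in the proof of Theorem~\ref{thm:finitary GAMs} and the dilation sense), so $S := \{w\}$ and $n := 2$ witness the defining inequality; since $M \in \mathcal{C}$ is an FFM, hence a BFM, by Proposition~\ref{prop:monoids in C are FF monoids}, the monoid $M$ is finitary. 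I expect the only genuinely delicate point to be the surjectivity of the conic hull onto $C$, i.e. reaching the boundary directions using only interior lattice points; the translate-by-$w$ device together with Lemma~\ref{lem:polyhedral cone containing an interior ray of a convex cone} does exactly this, and everything else is bookkeeping.
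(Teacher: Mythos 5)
Your proof is correct, and its core mechanism is the same as the paper's: translate by a nonzero lattice point in the interior of the cone so that the sum of any two nonzero elements of the resulting monoid lands back in a translate of it, giving $2M^\bullet \subseteq \{w\} + M$ with a singleton as the finite set. The paper takes $M' := \{0\} \cup \big(v_0 + (C \cap \nn^d)\big)$ for some $v_0 \in (C \cap \nn^d)^\bullet$ and concludes $2M'^\bullet \subseteq v_0 + M'$ exactly as you do. The differences lie in the supporting steps. Your monoid is $\{0\} \cup \big((w + \overline{C}) \cap \nn^d\big)$, all lattice points of the translated \emph{closed} cone, which is marginally tidier: since $w + \overline{C} \subseteq \inter\,\overline{C} = C^\bullet$, the inclusion $\cone_{\rr^d}(M) \subseteq C$ is immediate, and closure under addition is a one-line computation (note that for $x \in M^\bullet$ one has $x - w \in \overline{C} \cap \zz^d \subseteq \nn^d$, as you implicitly use). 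More substantively, for the reverse inclusion $C \subseteq \cone_{\rr^d}(M)$ you recycle Lemma~\ref{lem:polyhedral cone containing an interior ray of a convex cone} to trap each ray $\rr_{>0}x$, $x \in C^\bullet$, inside a rational simplicial cone with integer generators $v_i \in \inter\,\overline{C}$, and then observe that $Nv_i - w \in \inter\,\overline{C} \cap \nn^d$ for large $N$, so $Nv_i \in M^\bullet$; this handles irrational directions in one stroke. The paper instead reduces to rational points (quoting \cite[Proposition~1.70]{BG09} to the effect that the cone over $\qq$ determines the cone over $\rr$) and then runs an explicit conical-ball distance estimate to show $Nv - v_0 \in C$ for rational $v \in C^\bullet$ and sufficiently large $N$. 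Both verifications are sound; yours is shorter because the metric work is already packaged in Lemma~\ref{lem:polyhedral cone containing an interior ray of a convex cone}, while the paper's is self-contained modulo the Bruns--Gubeladze citation. Your remaining bookkeeping (existence of $w$, membership in $\mathcal{C}$, the BFM hypothesis via Proposition~\ref{prop:monoids in C are FF monoids}, the sumset reading of $nM^\bullet$, and the trivial case $C = \{0\}$) is all in order.
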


\begin{proof}
	Assume that $C^\bullet$ is open in $\rr^d$. Take $M = \nn^d \cap C$. It is clear that $C = \cone_{\rr^d}(M)$. Now take $v_0 \in M^\bullet$, and consider the monoid $M' := \{0\} \cup (v_0 + M)$. Let $C'$ be the cone generated by $M'$. Notice that $\qq^d \cap C$ and $\qq^d \cap C'$ are the cones generated by $M$ and $M'$ over $\qq$, respectively. So proving that $C' = C$ amounts to showing that $\qq^d \cap C' = \qq^d \cap C$ (see~\cite[Proposition~1.70]{BG09}). Since $M' \subseteq M$ it follows that $\qq^d \cap C' \subseteq \qq^d \cap C$. Now let $\ell_0$ be the distance from $\{v_0\}$ to $\rr^d \setminus C$. As $\rr^d \setminus C$ is closed and $\{v_0\}$ is compact, $\ell_0 > 0$. Now take $v \in \qq^d \cap C^\bullet$ such that $\norm{v} > 1$, and let $\ell$ be the distance from $v$ to $\rr^d \setminus C$. By a similar argument, $\ell > 0$. Notice that the conical ball
	\[
		B(v, \ell) := \bigg\{ w \in \qq^d \ \bigg{|} \ \frac{\norm{w - \mathsf{p}_v(w)}}{ \norm{w}} < \frac{\ell}{2} \bigg\}
	\]
	is contained in $\qq^d \cap C$; here $\mathsf{p}_v \colon \rr^d \to \rr v$ is the projection of $\rr^d$ onto its one-dimensional subspace $\rr v$. Take $N \in \nn$ such that
	\[
		N > \max \bigg\{ \frac{\norm{v_0}}{ \norm{v} - 1}, \frac{2 \norm{v_0}}{\ell} \bigg\}
	\]
	and $Nv \in \qq^d$. Now set $w_0 := Nv - v_0$. Notice that $\norm{w_0} \ge N \norm{v} - \norm{v_0} > N$. Then
	\[
		\frac{ \norm{w_0 - \mathsf{p}_v(w_0)}}{ \norm{w_0}} < \frac{ \norm{v_0 - \mathsf{p}_v(v_0) }}{N} \le \frac{ \norm{v_0}}{N} < \frac{\ell}{2}.
	\]
	Hence $w_0 \in \qq^d \cap B(v,\ell) \subseteq \qq^d \cap C$, and so there exist coefficients $c_1, \dots, c_k \in \qq_{> 0}$ and elements $v_1, \dots, v_k \in M^\bullet$ such that $w_0 = \sum_{i=1}^k c_i v_i$. As a consequence, one has that $nv \in M' \subseteq \cone_V(M')$ for some $n \in \nn$ and, therefore, $v \in \qq^d \cap \cone_V(M')$. Hence $\qq^d \cap C^\bullet \subseteq \qq^d \cap C$.
	
	As $M'$ generates $C$, we only need to verify that $M'$ is finitary. Take $w_1, w_2 \in M'^\bullet$, and then $v_1, v_2 \in M$ such that $w_1 = v_0 + v_1$ and $w_2 = v_0 + v_2$. Then
	\[
		w_1 + w_2 = v_0 + (v_0 + v_1 + v_2) \in v_0 + (v_0 + M) \subset v_0 + M'.
	\]
	As a result, $2M'^\bullet \subseteq v_0 + M'$, which implies that $M'$ is a finitary monoid.
\end{proof}

Theorem~\ref{thm:finitary GAMs} and Proposition~\ref{prop:a characterization of cones generated by monoids in C} indicate that there is a huge variety of finitary monoids in~$\mathcal{C}$. We proceed to exhibit a monoid in $\mathcal{C}_2$ that is not even weakly finitary. First, let us introduce the following notation.
\medskip

\noindent \textbf{Notation:} For $x \in \rr^2_{\ge 0} \setminus \{(0,0)\}$, we let $\slp(x) \in \rr_{\ge 0} \cup \{\infty\}$ denote the slope of the line $\rr x$, and for $X \subset \rr_{\ge 0}^2$ we set
\[
	\slp(X) := \{\slp(x) \mid x \in X^\bullet\}.
\]

\begin{example}
	Construct a sequence $(v_n)_{n \ge 1}$ of vectors in $\nn^\bullet \times \nn^\bullet$ as follows. Set $v_1 =~(1,1)$ and suppose that for some $n \in \nn$  and for every $i \in \ldb 1,n-1 \rdb$ we have chosen vectors $v_i = (x_i, y_i) \in \nn^\bullet \times \nn^\bullet$ satisfying that $\slp(v_i) < \slp(v_{i+1})$ and $i \norm{v_i} < \norm{v_{i+1}}$. Then take $v_{n+1} = (x_{n+1}, y_{n+1}) \in \nn^2$ in such a way that $x_{n+1} > 0$, $\slp(v_{n+1}) > \slp(v_n)$, and $\norm{v_{n+1}} > n \norm{v_n}$. Now one can consider the additive submonoid $M := \langle v_n \mid n \in \nn^\bullet \rangle$ of $\nn^2$. Clearly, $\mathcal{A}(M) \subseteq \{v_n \mid n \in \nn^\bullet\}$. On the other hand, the fact that $\norm{v_m} > \norm{v_n}$ when $m > n$ implies that only atoms in $\{v_1, \dots, v_{n-1}\}$ can divide $v_n$ in $M$. This, along with the fact that
	\[
		\slp(v_n) > \max \big\{ \slp(v_i) \mid i \in \ldb 1,n-1 \rdb \big\}
	\]
	for every $n \in \nn$, guarantees that $\mathcal{A}(M) = \{v_n \mid n \in \nn^\bullet\}$. Finally, let us verify that $M$ is not weakly finitary. Assume for a contradiction that there exist $n \in \nn$ and a finite subset $S$ of $M$ such that $nx \in S + M$ for all $x \in M^\bullet$. We can assume without loss of generality that $S \subseteq \mathcal{A}(M)$, so we let $S = \{v_{n_1}, \dots, v_{n_k}\}$, where $n_1 < \dots < n_k$. Take $N > \max \{n, n_k\}$. Then write $n' v_N = v_{n_i} + m$ for some $i \in \ldb 1,k \rdb$ and $m \in M$ such that $n' \le n$ and $v_N \nmid_M m$. Since $\slp (n' v_N) > \slp(v_{n_i})$, there exists $j > N$ such that $v_j \mid_M m$. Therefore
	\[
		\norm{Nv_N} > \norm{n'v_N} = \norm{ v_{n_i} + m } > \norm{m} \ge \norm{v_j} \ge \norm{v_{N+1}},
	\]
	which is a contradiction. Hence $M$ is not weakly finitary.
\end{example}

\smallskip
\subsection{Strongly Primary Monoids}

We conclude this paper with a few words about strongly primary monoids in $\mathcal{C}$. Recall that a monoid is strongly primary if it is simultaneously finitary and primary. Numerical monoids and $v$-Noetherian primary monoids are strongly primary. On the other hand, the multiplicative monoid of a $1$-dimensional local Mori domain is strongly primary. Finally, the class of strongly primary monoids also contains the class of finitely primary monoids~\cite[Theorem~2.9.2]{GH06b}. Strongly primary monoids have been investigated in~\cite{GGT19,GHL07,GR18}.

Let $M$ be a monoid. For $x \in M^\bullet$ the smallest $n \in \nn$ satisfying that $nM^\bullet \subseteq x + M$ is denoted by $\mathcal{M}(x)$. When such $n$ does not exist, we set $\mathcal{M}(x) = \infty$. If $M$ is strongly primary, then $\mathcal{M}(x) < \infty$ for all $x \in M^\bullet$~\cite[Lemma~2.7.7]{GH06b}. In addition, set
\[
	\mathcal{M}(M) := \sup \{ \mathcal{M}(a) \mid a \in \mathcal{A}(M) \} \subseteq \nn \cup \{\infty\}.
\]

\begin{example} \label{ex:strongly primary monoid with infinite M(M)}
	Consider the monoid
	\[
		M := \{(0,0)\} \cup \{(x,y) \in \nn^2 \mid x,y > 0\}.
	\]
	It is clear that
	\[
		A := \{(a,b) \in M \mid a=1 \ \text{ or } \ b=1 \} \subseteq \mathcal{A}(M).
	\]
	On the other hand, if $(x,y) \in M^\bullet \setminus A$, then $x,y \ge 2$ and, therefore,
	\[
		(x,y) = (1,1) + (x-1,y-1) \in M^\bullet + M^\bullet.
	\]
	Hence $\mathcal{A}(M) = A$. In addition, the fact that $(1,1) \mid_M (x,y)$ for all $(x,y) \in M^\bullet \setminus A$ implies that $\mathcal{M}((1,1)) = 2$. The inclusion $2M^\bullet \subseteq (1,1) + M$ implies that $M$ is a finitary monoid. On the other hand, $\cone_{\rr^2}(M)^\bullet$ is the open first quadrant, which implies via Proposition~\ref{prop:primary characterization} that $M$ is a primary monoid. As a result, $M$ is strongly primary. Now fix $n \in \zz_{\ge 2}$. Note that if $(n,1) \mid_M m(1,1)$ for some $m \in \nn$, then $m \ge n+1$. Thus, $\mathcal{M}((n,1)) \ge n+1$. On the other hand, if $(x,y) \in (n+1) M^\bullet$, then $x \ge n+1$ and $y \ge 2$, which implies that $(x,y) - (n,1) \in M$. As a result, $\mathcal{M}((n,1)) = n+1$ and, by a similar argument, $\mathcal{M}((1,n)) = n+1$. Hence $\mathcal{M}((a,b)) = a+b$ for every $(a,b) \in A$ and, in particular, $\mathcal{M}(M) = \infty$.
\end{example}

Unlike the computations shown in Example~\ref{ex:strongly primary monoid with infinite M(M)}, an explicit computation of the set $\{\mathcal{M}(a) \mid a \in \mathcal{M}\}$ for a monoid $M$ in $\mathcal{C}$ can be hard to carry out. However, for most monoids $M$ in $\mathcal{C}$ one can argue that $\mathcal{M}(M) = \infty$ without performing such computations.

\begin{prop}
	Let $M$ be a strongly primary monoid in $\mathcal{C}$, and set $V = \rr \otimes_{\zz} \emph{\gp}(M)$. The following statements are equivalent.
	\begin{enumerate}
		\item[(a)] The inequality $\mathcal{M}(M) < \infty$ holds.
		\smallskip
		
		\item[(b)] The equality $\dim \cone_V(M) = 1$ holds.
		\smallskip
		
		\item[(c)] The monoid $M$ is isomorphic to a numerical monoid.
	\end{enumerate}
\end{prop}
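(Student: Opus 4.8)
The plan is to funnel the whole equivalence through a single substantive implication, \textnormal{(a)} $\Rightarrow$ \textnormal{(c)}, and to obtain the remaining implications from tools already in place. First I would settle \textnormal{(b)} $\Leftrightarrow$ \textnormal{(c)} cheaply: by Corollary~\ref{cor:rank equal dimension} we have $\dim \cone_V(M) = \rank(M)$, so \textnormal{(b)} is literally the statement $\rank(M) = 1$. A nontrivial rank-$1$ monoid in $\mathcal{C}$ is isomorphic to a submonoid of $(\nn,+)$ by Proposition~\ref{prop:conditions defining monoids in C}, and dividing by the greatest common divisor of its nonzero elements exhibits it as (isomorphic to) a cofinite submonoid of $(\nn,+)$, i.e.\ a numerical monoid; conversely a numerical monoid has rank $1$. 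Next, \textnormal{(c)} $\Rightarrow$ \textnormal{(a)} is immediate: a numerical monoid is finitely generated, so $\mathcal{A}(M)$ is finite, and since numerical monoids are strongly primary we have $\mathcal{M}(a) < \infty$ for each $a \in \mathcal{A}(M)$ by \cite[Lemma~2.7.7]{GH06b}; hence $\mathcal{M}(M)$ is the supremum of finitely many finite numbers, so $\mathcal{M}(M) < \infty$.

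The heart of the matter is \textnormal{(a)} $\Rightarrow$ \textnormal{(c)}. By Proposition~\ref{prop:conditions defining monoids in C} I may assume $M \subseteq \nn^d$ with $d := \rank(M)$ and $V = \rr^d$, so that $\cone_V(M) \subseteq \rr^d_{\ge 0}$. Write $N := \mathcal{M}(M) < \infty$ and fix any $y \in M^\bullet$. For every atom $a$ the bound $\mathcal{M}(a) \le N$ gives $N M^\bullet \subseteq a + M$; applying this to $y$ yields $Ny - a \in M \subseteq \nn^d$, which in coordinates means $a_i \le N y_i$ for all $i$. Thus every atom lies in the finite box $\prod_{i=1}^d \ldb 0, N y_i \rdb$, so $\mathcal{A}(M)$ is finite and $M$ is finitely generated. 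Now $M$ is a finitely generated primary monoid, so $\cone_V(M)$ is polyhedral by Farkas--Minkowski--Weyl Theorem while $\cone_V(M)^\bullet$ is open by Proposition~\ref{prop:primary characterization}. The cone is also pointed and full-dimensional in $\rr^d$ (Proposition~\ref{prop:cones of members of C are pointed} and Corollary~\ref{cor:rank equal dimension}). If $\rank(M) \ge 2$, then this pointed polyhedral cone of dimension at least $2$ has a $1$-dimensional face, a proper face lying in the boundary of $\cone_V(M)$; its nonzero points are boundary points, contradicting the openness of $\cone_V(M)^\bullet$. Hence $\rank(M) = 1$, and the rank-$1$ analysis above identifies $M$ with a numerical monoid, giving \textnormal{(c)}. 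Assembling \textnormal{(a)} $\Leftrightarrow$ \textnormal{(c)} with \textnormal{(b)} $\Leftrightarrow$ \textnormal{(c)} completes the cycle.

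The step I expect to require the most care is the passage from \textnormal{(a)} to finite generation. The box argument is clean precisely because embedding $M$ into the nonnegative orthant makes the cone order refine the componentwise order on $\nn^d$, turning ``$a \le_{\cone} Ny$'' into a coordinatewise inequality that confines the atoms to finitely many lattice points. Without this normalization one would instead have to invoke the pointedness of $\overline{\cone_V(M)}$ from Proposition~\ref{prop:cones of members of C are pointed} and produce a strictly positive supporting functional $\varphi$ with $\varphi(x) \ge c\,\norm{x}$ on the cone in order to bound the order interval $\{x \in \cone_V(M) \mid Ny - x \in \cone_V(M)\}$; I would keep the orthant embedding precisely to avoid that detour.
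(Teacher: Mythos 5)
Your proof is correct and is essentially the paper's argument, reorganized: the paper proves (a) $\Rightarrow$ (b) contrapositively, producing atoms $a_n$ with $\norm{a_n} \to \infty$ (which exist because primary plus $\dim \ge 2$ forces $\cone_V(M)^\bullet$ to be open by Proposition~\ref{prop:primary characterization}, hence $M$ is not finitely generated) and deducing $\mathcal{M}(a_n) \ge \norm{a_n}/\norm{x} \to \infty$ from $\mathcal{M}(a_n)x \in a_n + M$, and your box argument is precisely the contrapositive of that bound, while your polyhedral-face contradiction is the same openness ingredient the paper invokes to rule out finite generation. The remaining implications (the equivalence of (b) and (c), and the return direction via finite generation of numerical monoids together with $\mathcal{M}(a) < \infty$ for strongly primary monoids) coincide with the paper's, so the two proofs differ only in routing the cycle through (c) instead of (b) and in spelling out details the paper leaves implicit.
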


\begin{proof}
	As a result of Proposition~\ref{prop:conditions defining monoids in C} and Proposition~\ref{prop:combinatorial and geometric equivalence of cones}, one can assume that $M$ is a submonoid of $(\nn^d,+)$, where $d$ is the rank of $M$. Then $V = \rr^d$.
	\smallskip
	
	(b) $\Leftrightarrow$ (c): It is clear.
	\smallskip
	
	(a) $\Rightarrow$ (b): To argue this suppose, by way of contradiction, that $\dim \cone_V(M) \neq 1$. Since $M$ is strongly primary $M^\bullet$ is not empty and, thus, $\dim \cone_V(M) \ge~2$. As $M$ is primary, $\cone_V(M)^\bullet$ is an open set of $V$ by Proposition~\ref{prop:primary characterization}. Therefore $M$ cannot be finitely generated, which means that $|\mathcal{A}(M)| = \infty$. Since $\{a \in \mathcal{A}(M) \mid \norm{a} < n \}$ is a finite set for every $n \in \nn$, there exists a sequence $(a_n)_{n \in \nn}$ of atoms of $M$ satisfying that $\lim_{n \to \infty} \norm{a_n} = \infty$. Now fix $x \in M^\bullet$. Because $\mathcal{M}(a_n)x = a_n + b$ for some $b \in M$,
	\[
		\lim_{n \to \infty} \mathcal{M}(a_n) = \lim_{n \to \infty} \frac{ \norm{a_n + b}}{ \norm{x}} \ge \frac{1}{ \norm{x}}\lim_{n \to \infty} \norm{a_n} = \infty.
	\]
	Hence $\mathcal{M}(M) = \infty$, which is a contradiction.
	\smallskip
	
	(b) $\Rightarrow$ (a): To argue this, suppose that $\dim \cone_V(M) = 1$. In this case, $M$ is isomorphic to a numerical monoid. Since numerical monoids are finitely generated, the inequality $\mathcal{M}(M) < \infty$ holds.
\end{proof}

\bigskip
\section*{Acknowledgments}

While working on this paper, the author was supported by the NSF~AGEP and the UC Year Dissertation Fellowship. The author would like to thank an anonymous referee for her/his useful comments.

\bigskip

\end{document}